\documentclass [12pt,a4paper]{article}

\usepackage{amstext,amsmath,amssymb,amsthm}
\usepackage{ulem}
\usepackage{braket}
\usepackage{amscd}
\usepackage{framed}
\usepackage{fancyhdr}
\usepackage{geometry}
\usepackage[final]{pdfpages}
\usepackage{graphics,color}

% \usepackage{showkeys}

%%%%%%%%%Tikz%%%%%%%%%%%%%%%%%%%%%%%%%
%%%%This is important package and commands to draw figures, so that please don't remove the following commands.%%%%%%%%%%%%%%%%%%%%%%%%
%%%%%%%%%%%%%%%%%%%%%%%%%%%%%%%%%%%%%%
\usepackage{tikz}
\usetikzlibrary{
positioning,
3d,
calc,
shapes,
arrows,
intersections,
math,
patterns
}
 \tikzstyle{na} = [baseline=-.5ex]

%%%%%%%%%%%%%%%%%%%%%%%%%%%%%%%%%%%%%%
%%%%%%%%%%%%%%%%%%%%%%%%%%%%%%%%%%%%%%
\usepackage{pgfplots}
\pgfplotsset{compat=1.6}
\pgfplotsset{soldot/.style={color=blue,only marks,mark=*}}
\pgfplotsset{holdot/.style={color=blue,fill=white,only marks,mark=*}}

%%%%%%%%%%%%%%%%%%%%%%%%%%%%%%%%
%%%%%%%%%%%%%%%%%%%%%%%%%%%%%%%%

%%%%%%%%%hyperlink%%%%%%%%%%%%%%%%%%%%
 \usepackage[colorlinks=true,linkcolor=blue, setpagesize=false, linkbordercolor=blue]{hyperref}
    % Warning: when you first run your tex file, some errors might occur,
   % please just press enter key to end the compilation process, then it will be fine if you run your tex file again.
   % Note that it is highly recommended by AIMS to use this package.
\hypersetup{urlcolor=green, citecolor=green}
 
%%%%%%%%%%%%%%%%%%%%%%%%%%%%%%%%%%

\usepackage{amsgen}

%%%%appendix%%%%%%
\usepackage[titletoc]{appendix}
%%%%%%%%%%%%

%%%%%%%%%%%%%%%
\usepackage{mathrsfs}
%%%%%%%%%%%%%%%

%---------------------Page Format----------------------
\oddsidemargin=0.3truecm
\evensidemargin=0.3truecm
\textwidth=15.6truecm
\textheight=23truecm
\setlength{\topmargin}{-16mm}
%---------------------------------------------------------------

\usepackage{wrapfig}
\usepackage{type1cm}

\usepackage{enumerate}

\usepackage{dashbox}

\DeclareMathOperator*{\esssup}{ess\,sup}

\DeclareMathOperator*{\supp}{supp}

%%%%%%%average integral symbol%%%%%%%%
\usepackage{esint}

\def\Xint#1{\mathchoice
{\XXint\displaystyle\textstyle{#1}}%
{\XXint\textstyle\scriptstyle{#1}}%
{\XXint\scriptstyle\scriptscriptstyle{#1}}%
{\XXint\scriptscriptstyle\scriptscriptstyle{#1}}%
\!\int}
\def\XXint#1#2#3{{\setbox0=\hbox{$#1{#2#3}{\int}$ }
\vcenter{\hbox{$#2#3$ }}\kern-.6\wd0}}

\def\dashint{\Xint-}

%%%%%%%%%%%%%%%%%%%%%%%%%%%%%%

%%%%%%%pagenumbering%%%%%%%%%%%%%%%%
\pagenumbering{arabic}

%%%%%%%%%%%%%%%%%%
%%%%%%%%%%stretch%%%%%%%%

%%%%%%%address%%%%%%%%%%%
\usepackage{authblk}
\usepackage{blindtext}

%%%%%%%%%
\makeatletter
  \def\thefootnote{\ifnum\c@footnote>\z@\leavevmode\lower.5ex%
      \hbox{$^{\@arabic\c@footnote)}$}\fi}
  \makeatother

\allowdisplaybreaks[4]

\newtheorem{thm}{Theorem}[section]
\newtheorem{prop}[thm]{Proposition}
\newtheorem{lem}[thm]{Lemma}
\newtheorem{dfn}[thm]{Definition}
\newtheorem{cor}[thm]{Corollary}
\newtheorem{rmk}[thm]{Remark}

\makeatletter

\@addtoreset{equation}{section}
\makeatother

\begin{document}
%%%%%%%%%%%%%%%%%%%%%%%%%%%%%%%%%%%%%%%%%%%%%%%%%%%%%%%%%%%%%%%%%%%%%%
%%%%%%%%%%%%%%%%%%%%%%%%%%%%%%%%%%%%%%%%%%%%%%%%%%%%%%%%%%%%%%%%%%%%%%
\titlepage
\title{\textbf{Regularity estimates for the $p$-Sobolev flow}}
\date{}%{\textcolor{blue}{\texttt{2019.4.5 newest ver.}}}

%%%%%%%%%%%%%%%%%%%%%%%%%%%%%%%%%%%%%%%%%%%%
%%%%%%%%%%%%%%%%%%%%%%%%%%%%%%%%%%%%%%%%%%%%%%%%%%%%%%%%%%%%%%%%%%%%%%%%%%%%%%%%%%%

\author[$\star$]{Tuomo Kuusi}
 %%%%%%Please check the following affiliation and e-mail address%%%%%%%
\affil[$\star$]{\small{Department of Mathematics and Statistics,
University of Helsinki, PL 68 (Pietari Kalmin katu 5)
00014 University of Helsinki, Finland
 \quad \texttt{tuomo.kuusi@helsinki.fi}}}

  \author[$\ast$]{Masashi Misawa}
 \affil[$\ast$]{\small {Faculty of Advanced Science and Technology, Kumamoto University, Kumamoto 860-8555, Japan\quad
  \texttt{mmisawa@kumamoto-u.ac.jp}}}

  \author[$\dagger$]{Kenta Nakamura}
\affil[$\dagger$]{\small{Headquarters for Admissions and Education, Kumamoto University, Kumamoto 860-8555, Japan
 \quad \texttt{kntkuma21@kumamoto-u.ac.jp}}}

 \maketitle
 \begin{abstract}
We study doubly nonlinear parabolic equation arising from the gradient flow for $p$-Sobolev type inequality, referred as $p$-Sobolev flow from now on,  which includes the classical Yamabe flow on a bounded domain in Euclidean space in the special case $p=2$. In this article we establish a priori estimates and regularity results for the $p$-Sobolev type flow, which are necessary for further analysis and classification of limits as time tends to infinity.
\end{abstract}

\tableofcontents

\section{Introduction}

Let $\Omega \subset \mathbb{R}^{n}\,(n \geq 3)$ be a bounded domain with smooth boundary $\partial \Omega $. For any positive $T \leq \infty$, let $\Omega_T : = \Omega \times (0, T)$ be the space-time cylinder, and let $\partial_p \Omega_T$ be the parabolic boundary defined by $(\partial \Omega \times [0, T))\cup ( \Omega \times \{t = 0\}) $. Throughout the paper we fix  $ p \in [2,n)$ and set $q:=p^\ast - 1$, where $p^\ast := \frac{np}{n-p}$ is the Sobolev conjugate of $p$.
We consider the following doubly nonlinear parabolic equation
\begin{equation}\label{pS}
\begin{cases}
\,\,\partial_t(|u|^{q-1}u)-\Delta_pu=\lambda(t) |u|^{q-1}u\quad &\textrm{in}\quad \Omega_\infty  \\
\,\,u=0\quad &\textrm{on}\quad \!\!\partial\Omega \times (0,\infty) \\
\,\,u(\cdot, 0)=u_0(\cdot ) \quad &\textrm{in}\quad \Omega \\[1mm]
\,\,\displaystyle \| u(t) \|_{L^{q+1}(\Omega)} =1\quad &\textrm{for all}\,\, t \geq 0 .
\end{cases}
\end{equation}
Here the unknown function $u=u(x,t)$ is a real-valued function defined for $(x,t) \in \Omega_\infty$, and the initial data $u_0$ is assumed to be in the Sobolev space $W^{1, p}_0 (\Omega)$, positive, bounded in $\Omega$ and satisfy $ \| u_0 \|_{L^{q+1}(\Omega)}=1$, and $\Delta_p u:=\mathrm{div}\left(|\nabla u|^{p-2}\nabla u\right)$ is the $p$-Laplacian. The condition imposed in the fourth line of \eqref{pS} is called the \textit{volume} constraint and $\lambda (t)$ is Lagrange multiplier stemming from this volume constraint. In fact, multiplying \eqref{pS} by $u$ and integrating by parts, we find by a formal computation that $\displaystyle \lambda (t)=\int_\Omega |\nabla u(x,t)|^p \,dx$ (See Proposition \ref{Energy equality pS} below for the rigorous argument). We call the system \eqref{pS} as \textit{$p$-Sobolev flow}.

\smallskip

Our main result in this paper is the following theorem.
\begin{thm}\label{main theorem}
Let $\Omega$ be a bounded domain with smooth boundary.
Suppose that the initial data $u_0$ is positive in $\Omega$, belongs to $W^{1, p}_0(\Omega) \cap L^\infty (\Omega)$, and satisfies the volume constraint $ \| u_0 \|_{L^{q+1}(\Omega)}=1$.  Let $u$ be a weak solution of \eqref{pS} in $\Omega_\infty\equiv \Omega \times (0, \infty)$ with the initial and boundary data $u_0$. Then, $u$ is positive and bounded in $\Omega_\infty$
and, together with its spatial gradient, locally H\"{o}lder continuous in   $\Omega_\infty$. Moreover,  for $t \in [0,\infty)$,
\begin{equation}\label{te.lambdaineq}
\lambda(t)=\int_{\Omega}|\nabla u(x,t)|^{p}\,dx \qquad \mbox{and} \qquad  \lambda(t) \leq \lambda(0).
\end{equation}

\end{thm}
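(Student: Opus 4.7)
The statement bundles three assertions: the identity $\lambda(t)=\int_\Omega|\nabla u|^{p}\,dx$ together with the monotonicity $\lambda(t)\le\lambda(0)$, strict positivity and an $L^\infty$ bound for $u$, and local H\"older regularity of $u$ and $\nabla u$. I would establish them in that order, since each later step uses the previous ones.

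For the Lagrange-multiplier identity I would test \eqref{pS} with $u$ itself. Using
\[
\partial_t(|u|^{q-1}u)\,u=\frac{q}{q+1}\partial_t(|u|^{q+1}),
\]
integration by parts in space together with the volume constraint $\|u(t)\|_{L^{q+1}(\Omega)}=1$ (whose time derivative vanishes) yields $\lambda(t)=\int_\Omega|\nabla u|^p\,dx$. For the monotonicity of $\lambda$ the natural strategy is to test the equation formally with $\partial_t u$: the $p$-Laplacian term contributes $\frac{1}{p}\frac{d}{dt}\lambda(t)$, the right-hand side reduces to $\frac{\lambda(t)}{q+1}\frac{d}{dt}\|u\|_{L^{q+1}}^{q+1}=0$ by the constraint, and the parabolic term produces a nonnegative dissipation, giving
\[
\frac{d}{dt}\lambda(t)=-pq\int_\Omega |u|^{q-1}(\partial_t u)^2\,dx\le 0.
\]
Both computations must be made rigorous via Steklov averaging or a Galerkin/regularization scheme; this is presumably the content of Proposition~\ref{Energy equality pS}.

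For the second block I would proceed by standard nonlinear parabolic techniques. Because the zero-order term $\lambda(t)|u|^{q-1}u$ has the same sign as $u$ and $\lambda$ is now bounded by $\lambda(0)$, a weak comparison/minimum-principle argument starting from $u_0>0$ gives $u>0$ throughout $\Omega_\infty$. For the $L^\infty$ bound I would test the equation with truncations $(u-k)_+$ and run a De Giorgi iteration on super-level sets; the uniform control of $\lambda$ makes the lower-order contribution absorbable. Once $u$ is bounded above and locally bounded away from zero on each compact subcylinder, the coefficient $|u|^{q-1}$ in front of $\partial_t u$ is pinched between two positive constants there, and \eqref{pS} reduces locally to a parabolic $p$-Laplace type equation with bounded coefficients. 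Local H\"older continuity of $u$ then follows from DiBenedetto's intrinsic scaling theory, and the H\"older continuity of $\nabla u$ from the corresponding $C^{1,\alpha}$ results of DiBenedetto--Friedman type.

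I expect the rigorous justification of the energy identities to be the first real obstacle, since $\partial_t u$ is not a priori an admissible test function in the doubly nonlinear setting and some form of time mollification (Steklov averaging) combined with an a posteriori passage to the limit is unavoidable. The second, and deeper, difficulty is the gradient H\"older estimate: the doubly nonlinear time derivative $\partial_t(|u|^{q-1}u)$ forces the use of intrinsic parabolic cylinders adapted jointly to the size of $u$ and of $|\nabla u|$, so the DiBenedetto--Friedman scheme has to be reworked with a scaling that respects the $(|u|^{q-1}u)_t$ structure rather than the plain $u_t$ structure of the standard evolutionary $p$-Laplace equation.
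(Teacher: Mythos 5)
Your first block (testing with $u$ for the identity $\lambda(t)=\int_\Omega|\nabla u|^p\,dx$, then testing with $\partial_t u$ to get the dissipation identity and $\lambda(t)\le\lambda(0)$, made rigorous by time mollification) is exactly the paper's Proposition \ref{Energy equality pS}, supported by Lemmata \ref{existence of u_t} and \ref{regularization}; and your final reduction of H\"older and gradient H\"older regularity to the evolutionary $p$-Laplacian once $0<\tilde c\le u\le M$ locally is also the paper's route (there via the substitution $v=u^q$, which turns \eqref{pS} into \eqref{eq. of v} with a H\"older continuous, uniformly elliptic coefficient $g^{p-1}$, so no reworking of the DiBenedetto--Friedman intrinsic scaling for the doubly nonlinear structure is needed --- known $C^{1,\alpha}$ results for $p$-Laplacian systems with H\"older coefficients are quoted). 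The boundedness step is also fine in spirit, though the paper gets it more cheaply than a De Giorgi iteration: since $\lambda(t)\le\lambda(0)$, $u$ is a subsolution of \eqref{maineq} with $c=\lambda(0)$, and the Alt--Luckhaus truncation argument of Proposition \ref{Boundedness} gives $\|u(t)\|_{L^\infty}\le e^{\lambda(0)T/q}\|u_0\|_{L^\infty}$ directly, avoiding the delicacy of iterating with the critical-growth right-hand side $\lambda(t)u^q$, $q=p^\ast-1$.

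The genuine gap is your positivity step. The claim that ``the zero-order term has the same sign as $u$, so a weak comparison/minimum-principle argument starting from $u_0>0$ gives $u>0$ throughout $\Omega_\infty$'' does not work in this regime: since $q+1=p^\ast>p$, the equation is of fast-diffusion type, and the minimum principle (Proposition \ref{Nonnegativity prop pS}) only yields $u\ge0$. Solutions of the unconstrained problem $\partial_t(u^q)-\Delta_p u=0$ with zero lateral boundary data extinguish in finite time; because $\lambda(t)\ge0$, $u$ is merely a supersolution of that equation, so comparison from below with such solutions gives positivity only up to their extinction time, and the favorable sign of $\lambda(t)|u|^{q-1}u$ cannot prevent extinction since this term degenerates exactly where $u$ is small. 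Nor is there, in general, a positive stationary subsolution to compare with (for star-shaped $\Omega$, Pohozaev forces the trivial solution, as the paper notes). The paper's proof uses the volume constraint in an essential way, which your argument never invokes: $\|u(t)\|_{L^{q+1}}=1$ together with the $L^\infty$ bound gives, at \emph{every} time, the measure lower bound $|\Omega'\cap\{u(t)\ge L\}|\ge\alpha|\Omega'|$ of Proposition \ref{Interior positivity by the volume constraint}; this is then propagated into a quantitative pointwise lower bound by the expansion-of-positivity machinery of Section \ref{Expansion of positivity section} (Caccioppoli estimates, De Giorgi iteration with the intrinsic time-slicing of Remark \ref{choice of parameter}, and the Harnack-chain covering of Theorem \ref{ex. thm prime} and Corollary \ref{ex. cor prime}); finally, positivity up to $\partial\Omega$ requires the separate barrier construction and comparison theorem of Proposition \ref{Positivity near the boundary}. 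Without this mechanism your plan cannot rule out finite-time vanishing, which the paper identifies as the central obstruction, and the subsequent regularity statements (which need $u$ locally pinched away from zero) would have no foundation.
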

The definition of a weak solution is given in Definition~\ref{def of weak sol.}.
The global existence of the $p$-Sobolev flow and its asymptotic behavior, that is the volume concentration at infinite time, will be treated in our forthcoming paper,
based on the a-priori regularity estimates for the $p$-Sobolev flow, obtained in the main theorem above.

\smallskip

The ODE part of the $p$-Sobolev flow equation is of exponential type, since the order of solution in both the time derivative and lower-order terms are the same. Thus, the solution is bounded for all times by the maximum principle (Proposition \ref{Boundedness}). On the other hand, {\it a priori} the solution may vanish in a finite time, by the effect of fast diffusion. %Vanishing in finite time, without volume constraint, can be shown by using a comparison solution obtained with separation of variables (Proposition \ref{Finite time vanishing of solutions to p-Sobolev flow}).
This undesirable behavior can be ruled out for the $p$-Sobolev flow \eqref{pS} by the volume constraint, that is the preservation of volume at all time. In fact, we show the global positivity of solutions of \eqref{pS} under the volume constraint (Proposition \ref{Interior positivity by the volume constraint}). % By the energy structure of \eqref{pS} relating the Sobolev inequality, this does not contradict to the vanishing condition for the very fast diffusion equation (Lemma \ref{not vanish thm}).
The positivity of solutions is based on local energy estimates for truncated solution and De Giorgi's iteration method. For the porous medium and $p$-Laplace equations, see \cite{Vazquez1,Vazquez2,DiBenedetto1,DiBenedetto2}, and also~\cite{Urbano}. Our task is to discover the intrinsic scaling to the doubly nonlinear operator in the $p$-Sobolev flow (Corollary \ref{key cor}). Then, the interior positivity is obtained from some covering argument, being reminiscent of the so-called \textit{Harnack chain} (Corollary \ref{ex. cor prime}). This leads to the positivity and regularity on a \textit{non-convex} domain and thus, may be of interest in geometry. Once the interior positivity is established, the positivity near the boundary of domain follows from the usual comparison function (Proposition \ref{Positivity near the boundary}).
Finally, the H\"{o}lder continuity reduced to that of the evolutionary $p$-Laplacian equation, by use of the positivity and boundedness of solutions. The energy equality also holds true for a weak solution of the $p$-Sobolev flow, leading to the continuity on time of the $p$-energy and volume.

\smallskip

The doubly nonlinear equations have been considered by Vespri \cite{Vespri1}, Porizio and Vespri \cite{Porzio-Vespri}, and Ivanov \cite{Ivanov1, Ivanov2}.  See also \cite{Gianazza-Vespri, Porzio-Vespri, Vespri2,Kinnunen-Kuusi,Kuusi-Siljander-Urbano}.
The regularity proofs for doubly nonlinear equations are based on the intrinsic scaling method, originally introduced by DiBenedetto, and they have to be arranged in some way depending on the particular form of the equation. Here, the very fast diffusive doubly nonlinear equation such as the $p$-Sobolev flow \eqref{pS} is treated, and the positivity, boundedness and regularity of weak solutions are studied and shown in some precise way. See \cite{Nakamura-Misawa} for existence of a weak solution.

\smallskip

Consider next the stationary equation for \eqref{pS}, which is described by the $p$-Laplacian type elliptic equation, obtained from simply removing the time derivative term from the first equation in \eqref{pS}. This stationary equation relates to the existence of the extremal function attaining the best constant of Sobolev's embedding inequality, $W^{1,p}_0 (\Omega) \hookrightarrow L^{q+1} (\Omega)$.
If the domain $\Omega$ is star-shaped with respect to the origin, the trivial solution $u \equiv 0$ only exists, by Pohozaev's identity and Hopf's maximum principle. Thus, the $p$-Sobolev flow \eqref{pS}, if globally exists, may have finitely many volume concentration points at infinite time. This volume concentration phenomenon is one of our motives of studying the $p$-Sobolev flow \eqref{pS}. Moreover, if the domain $\Omega$ is replaced by a smooth compact manifold, we can study the generalization of Yamabe problem in the sense of $p$-Laplacian setting. This is another geometric motive for the $p$-Sobolev flow.

%In previous paper \cite{Nakamura-Misawa}, we addressed the following doubly nonlinear parabolic equation of $p$-Sobolev flow type
%
%\begin{equation}\label{pS'}
%\begin{cases}
%\,\,\partial_t(|u|^{q-1}u)-\Delta_pu=0\quad &\textrm{in}\quad \Omega_T  \\
%\,\,u=0\quad &\textrm{on}\quad \partial\Omega \times (0,T) \\
%\,\,u(\cdot, 0)=u_0(\cdot ) \quad &\textrm{in}\quad \Omega, \\
%\end{cases}
%\end{equation}
%
%closely relating to (\ref{pS}), and established that there exists a weak solution to (\ref{pS'}) in the case where the initial data $u_{0}$ in $W^{1,p}_{0}(\Omega) \cap L^{\infty}(\Omega)$.
%Now we will take a look at some geometric motives of (\ref{pS}).

\smallskip

In fact, in the case that $p=2$, our $p$-Sobolev flow (\ref{pS}) is exactly the classical Yamabe flow equation in the Euclidean space. The classical Yamabe flow was originally introduced by Hamilton in his study of the so-called Yamabe problem (\cite{Yamabe, Aubin1, Aubin2}), asking the existence of a conformal metric of constant curvature on $n(\geq 3)$-dimensional closed Riemannian manifolds (\cite{Hamilton89}). Let $(\mathcal{M},g_{0})$ be a $n(\geq 3)$-dimensional smooth, closed Riemannian manifold with scalar curvature $R_{0}=R_{g_{0}}$. The classical Yamabe flow is given by the heat flow equation
\begin{equation}
\label{heatflow}
u_t=(s-R)u=u^{-\frac{4}{n-2}}(c_n \Delta_{g_0}u-R_0u)+su,
\end{equation}
where $u=u(t), t\geq 0$ is a positive function on $\mathcal{M}$ such that $g(t)=u(t)^{\frac{4}{n-2}}g_0$ is a conformal change of a Riemannian metric $g_{0}$, with volume  $\displaystyle \mathrm{Vol}(\mathcal{M})=\int_\mathcal{M}\,dvol_g=\int_\mathcal{M} u^{\frac{2n}{n-2}}dvol_{g_0}=1$, having total curvature
$$\displaystyle s:=\int_\mathcal{M}(c_n |\nabla u|_{g_0}^2+R_0u^2)\,dvol_{g_0}=\int_\mathcal{M}R\,dvol_{g},\quad c_n:=\frac{4(n-1)}{n-2}.$$
Here we will notice that the condition for volume above naturally corresponds the volume constraint in \eqref{pS}.
Hamilton (\cite{Hamilton89}) proved convergence of the Yamabe flow as $t \to \infty$ under some geometric conditions. Under the assumption that $(\mathcal{M},g_0)$ is of positive scalar curvature and locally conformal flat, Ye (\cite{Ye}) showed the global existence of the Yamabe flow and its convergence as $t \to \infty$ to a metric of constant scalar curvature. Schwetlick and Struwe (\cite{Schwetlick-Struwe})
succeeded in obtaining the asymptotic convergence of the Yamabe flow in the case $3 \leq n \leq 5$, under an appropriate condition of Yamabe invariance $Y(\mathcal{M},g_0)$, which is given by infimum of Yamabe energy $E(u)=\int_\mathcal{M}(c_n |\nabla u|_{g_0}^2+R_0u^2)\,dvol_{g_0}$ among all positive smooth function $u$ on $\mathcal{M}$ with $\mathrm{Vol}(\mathcal{M})=1$, for an initial positive scalar curvature. In Euclidean case, since $R_{g_{0}}=0$ their curvature assumptions are not verified. In outstanding results concerning the Yamabe flow, the equation is equivalently transformed to the scalar curvature equation, and this is crucial for obtaining many properties for the Yamabe flow.
In this paper, we are forced to take a direct approach dictated by the structure of the $p$-Laplacian leading to the degenerate/singular parabolic equation of the $p$-Sobolev flow. Let us remark that our results cover those of the classical Yamabe flow in the Euclidian setting.

%
%When $q=1$ DiBenedetto (\cite{DiBenedetto1}) and DiBenedetto, Gianazza and Vespri (\cite{DiBenedetto2}) studied a super-subsolution to the degenerate and singular parabolic equations of $p$-Laplacian type, devising a intrinsic scaling method to the evolutionary $p$-Laplace operator and combining with the DeGiorgi truncation and iteration method. This method is fundamental to the regularity issue for degenerate and singular parabolic equations of $p$-Laplacian type. When $q=p-1$, Kuusi, Siljander and Urbano (\cite{Kuusi-Siljander-Urbano}) establish a local H\"{o}lder regularity for nonnegative weak solutions and derive local Harnack's inequality.

%In this paper we will study some a-priori estimates and regularity for doubly nonlinear equations of the $p$-Sobolev flow type, which will lead to the global existence of the $p$-Sobolev flow.
%The global existence of the $p$-Sobolev flow will be treated in our forthcoming paper.
\medskip

The structure of this paper is as follows. In Section \ref{Sec. Preliminaries} we prepare some notations and technical analysis tools, which are used later.
Section \ref{Sec. Doubly nonlinear equations of p-Sobolev flow type} provides basic definitions of weak solutions, and also some basic study the doubly nonlinear equations of $p$-Sobolev flow type, including (\ref{pS}), and derivation of the minimum and maximum principles. Moreover, we establish the comparison theorem and make the Caccioppoli type estimates, which have a crucial role in Section \ref{Expansion of positivity section}.
In the next section, Section \ref{Expansion of positivity section}, we prove the expansion of positivity for the doubly nonlinear equations of $p$-Sobolev flow type. In %Section \ref{Sect. Finite time vanishing of solutions to the p-Sobolev flow} we study the finite time vanishing of solutions to the doubly nonlinear equations of $p$-Sobolev flow type, and in
Section \ref{Sect. The p-Sobolev flow} we show the positivity, the energy estimates and, consequently, the H\"{o}lder regularity for the p-Sobolev flow (\ref{pS}). Finally, in Appendix \ref{Sec. Some fundamental facts}, \ref{Sec. Proof energy equality} and \ref{Sec. Notes on Holder regularity}, for the $p$-Sobolev flow, we give detailed proofs of facts used in the previous sections.
%

%%%%%%%%%%%%%%Chapter2%%%%%%%%%%%%
 \section{Preliminaries}\label{Sec. Preliminaries}
 We prepare some notations and technical analysis tools, which are used later.
 \subsection{Notation}

 Let $\Omega$ be a bounded domain in $\mathbb{R}^n\,\,(n \geq 3)$ with smooth boundary $\partial \Omega$ and for a positive $T \leq \infty$ let $\Omega_T:=\Omega \times (0,T)$ be the cylindrical domain. Let us define the parabolic boundary of $\Omega_T$ by
\begin{equation*}
\partial_p \Omega_T:= \partial \Omega \times [0, T)\cup \Omega \times \{t = 0\}.
 \end{equation*}

We prepare some function spaces, defined on space-time region.  For $1 \leq p,q \leq \infty$, $L^{q}(t_1,t_2\,;\,L^{p}(\Omega))$ is a function space of measurable real-valued functions on a space-time region $\Omega \times (t_1,t_2)$ with a finite norm
\begin{equation*}
\|v\|_{L^{q}(t_1,t_2\,;\,L^{p}(\Omega))}:=
\begin{cases}
\displaystyle \left(\int_{t_1}^{t_2}\|v(t)\|_{L^{p}(\Omega)}^{q}\,dt \right)^{1/q}\quad &(1 \leq q<\infty) \\
\displaystyle \esssup_{t_1 \leq t \leq t_2}\|v(t)\|_{L^{p}(\Omega)}\quad &(q=\infty),
\end{cases}\notag
\end{equation*}
where
\begin{equation*}
\|v(t)\|_{L^{p}(\Omega)}:=
\begin{cases}
\left(\displaystyle \int_{\Omega}|v(x,t)|^{p}\,dx \right)^{1/p}\quad &(1 \leq p <\infty )\\
\esssup \limits_{x \in \Omega}|v(x,t)|\quad &(p=\infty).
\end{cases}
\end{equation*}
When $p=q$, we write $L^p(\Omega \times (t_1,t_2))=L^{p}(t_1,t_2\,;\,L^{p}(\Omega))$ for brevity. For $1 \leq p <\infty$ the Sobolev space $W^{1,p}(\Omega)$ is consists of measurable real-valued functions that are weakly differentiable and their weak derivatives are $p$-th integrable on $\Omega$, with the norm
\begin{equation*}
\|v\|_{W^{1,p}(\Omega)}:=\left(\int_{\Omega}|v|^{p}+|\nabla v|^{p}\,dx \right)^{1/p},\notag
\end{equation*}
where  $\nabla v=(v_{x_1},\ldots, v_{x_n})$  denotes the gradient of $v$ in a distribution sense, and let $W_{0}^{1,p}(\Omega)$ be the closure of $C_{0}^{\infty}(\Omega)$ with resptect to the norm $\|\cdot\|_{W^{1,p}}$.  Also let $L^{q}(t_1,t_2\,;\,W_{0}^{1,p}(\Omega))$ denote a function space of measurable real-valued functions on space-time region with a finite norm
\begin{equation*}
\|v\|_{L^{q}(t_1,t_2\,;\,W_{0}^{1,p}(\Omega))}:=\left(\int_{t_1}^{t_2}\|v(t)\|_{W^{1,p}(\Omega)}^{q} \,dt\right)^{1/q}.
\end{equation*}

Let $B=B_\rho(x_0):=\{x \in \mathbb{R}^n\,:\,|x-x_0|<\rho\}$ denote an open ball with radius $\rho>0$ centered at some $x_0 \in \mathbb{R}^n$. Let $E \subset \mathbb{R}^{n}$ be a bounded domain. For a real number $k$ and for a function $v$ in $L^{1}(E)$ we define the \textit{truncation} of $v$ by
\begin{equation}\label{truncation}
(v-k)_+:=\max\{(v-k),0\};\quad (k-v)_+:=\max\{(k-v),0\}.
\end{equation}

For a measurable function $v$  in $L^{1}(E)$ and a pair of real numbers $k<l$, we set
\begin{equation}\label{def of sets}
\begin{cases}
E \cap \{v>l\}:=\{x \in E\,:\,v(x)>l\} \\
E \cap \{v<k\}:=\{x \in E\,:\, v(x)<k\} \\
E \cap \{k<v<l\}:=\{x \in E\,:\,k<v(x)<l\}.
\end{cases}
\end{equation}

Let $z=(x,t) \in \mathbb{R}^{n}\times \mathbb{R}$ be a space-time variable and  $dz=dxdt$ be the space-time volume element.
%For a measurable set $S \subset \mathbb{R}^{n+1}$ and a function $f \in L^1(S)$, let us define the average integral of $f$ over $S$ by
%
%\begin{equation*}
%(f)_{S}\equiv \dashint_{S} f dz:=\frac{1}{|S|}\int_{S} f\,dz,
%\end{equation*}
%
%where $|S|$ denotes the $(n+1)$-dimensional Lebesgue measure of $S$.

\subsection{Technical tools}

We first recall the following De Giorgi's inequality (see \cite{DiBenedetto1}).

\begin{prop}[De Giorgi's inequality] \label{De Giorgi's inequality}
Let $v \in W^{1,1}(B)$ and $k,l \in \mathbb{R}$ satisfying $k<l$. Then there exists a positive constant $C$ depending only on $p,n$ such that
\begin{equation}\label{De Giorgi's ineq.}
(l-k)\big|B \cap \{v>l\}\big| \leq C \frac{\rho^{n+1}}{\big|B \cap \{v <k\} \big|}\int_{B \cap \{k <v<l\}}|\nabla v|\,dx.
\end{equation}
\end{prop}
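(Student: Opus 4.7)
The plan is to express all three measures in the inequality through the truncated function
\[ w := \min\{(v-k)_+,\, l-k\} \in W^{1,1}(B), \]
which vanishes on $B \cap \{v \leq k\}$, equals $l-k$ on $B \cap \{v \geq l\}$, and satisfies $\nabla w = \chi_{\{k < v < l\}} \nabla v$ almost everywhere, so that $|\nabla w| = |\nabla v|\,\chi_{\{k<v<l\}}$. We may assume $|B \cap \{v<k\}| > 0$, since otherwise the right-hand side is interpreted as $+\infty$ and the bound is vacuous. The target is then the pointwise (in $x$) Riesz-potential estimate
\[ (l-k)\,\bigl|B \cap \{v < k\}\bigr| \;\leq\; C(n)\,\rho^n \int_B \frac{|\nabla w(z)|}{|x-z|^{n-1}}\,dz \qquad \text{for a.e. } x \in B \cap \{v > l\}, \]
from which the claim will follow by integration in $x$.

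First, I would use the absolute continuity of Sobolev functions along lines: for a.e.\ $x \in B \cap \{v > l\}$ and a.e.\ $y \in B \cap \{v < k\}$ the segment from $y$ to $x$ lies in $B$ (since $B$ is convex) and the fundamental theorem of calculus gives
\[ l - k \;=\; w(x) - w(y) \;\leq\; \int_0^{|x-y|} |\nabla w(y + s\omega)|\,ds, \qquad \omega := \frac{x-y}{|x-y|}. \]
Integrating over $y$ with $x$ fixed, passing to polar coordinates $y = x - r\theta$ so that $dy = r^{n-1}\,dr\,d\theta$ and $\omega = \theta$, substituting $\tau = r - s$ so that $y + s\omega = x - \tau\theta$, bounding $r^{n-1} \le (2\rho)^{n-1}$, exchanging the order of the $r$- and $\tau$-integrations (on $\{0 \le \tau \le r \le 2\rho\}$, which contributes an extra factor $2\rho$), and reverting to Cartesian coordinates $z = x - \tau\theta$ produces exactly the pointwise estimate displayed above, with the factor $\rho^n$ arising from this bookkeeping.

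Second, I would integrate the pointwise estimate in $x$ over $B \cap \{v > l\}$ and apply Fubini. The only remaining inner integral is controlled by the crude inclusion $B \cap \{v > l\} \subset B_{2\rho}(z)$ together with the elementary identity
\[ \int_{B_{2\rho}(z)} \frac{dx}{|x-z|^{n-1}} \;=\; 2\rho\,|S^{n-1}|, \]
which supplies the remaining factor $\rho$. Dividing through by $|B \cap \{v<k\}|$ and using $|\nabla w|=|\nabla v|$ on $\{k<v<l\}$ yields the stated inequality with $C = C(n)$. I expect the only nontrivial obstacle to be the chain of changes of variables in the first step; once the truncation $w$ has been introduced there is no further conceptual difficulty.
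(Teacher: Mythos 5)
Your proposal is correct: the truncation $w=\min\{(v-k)_+,l-k\}$, the segment/polar-coordinate estimate giving the pointwise Riesz-potential bound, and the final Fubini step all go through, and the resulting constant depends only on $n$ as required (the only point needing a word of care, the fundamental theorem of calculus along a.e.\ segment for a $W^{1,1}$ function, is standard, e.g.\ by smooth approximation). The paper itself states this proposition without proof, citing DiBenedetto's book, and your argument is essentially the classical proof given there, so there is nothing to compare beyond noting the agreement.
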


Let $q=np/(n-p)-1$ as before. Following \cite{DiBenedetto1}, we define the auxiliary function \begin{equation}
\begin{cases}\label{def of A}
 A^{+}(k,u):=\displaystyle\int_{k^q}^{u^q}\left(\xi^{1/q}-k\right)_+\,d\xi \\[3mm]
 A^{-}(k,u):=\displaystyle\int_{u^q}^{k^q}\left(k-\xi^{1/q}\right)_+\,d\xi
 \end{cases}
\end{equation}
for $u \geq 0$ and $k \geq 0$. Changing a variable $\eta=\xi^{1/q}$, we have
\begin{equation*}
A^{+}(k,u)= q\int_k^u(\eta-k)_+\eta^{q-1}\,d\eta =q \int_0^{(u-k)_+}(\eta+k)^{q-1}\eta\,d\eta
\end{equation*}
and \begin{equation*}
A^{-}(k,u)= q\int_u^k(k-\eta)_+\eta^{q-1}\,d\eta =q \int_0^{(k-u)_+}(k-\eta)^{q-1}\eta\,d\eta.
\end{equation*}Then we formally get
\begin{equation}\label{eq. of A^{+}}
\frac{\partial }{\partial t}A^{+}(k,u)=\frac{\partial u^q}{\partial t}(u-k)_+
\end{equation}
and
\begin{equation}\label{eq. of A^{-}}
\frac{\partial }{\partial t}A^{-}(k,u)=-\frac{\partial u^q}{\partial t}(k-u)_+.
\end{equation}
If $k = 0$, we abbreviate as
\begin{equation*}
A^+ (u) = A^+ (0, u) \quad \textrm{and} \quad A^- (u) = A^- (0, u).
\end{equation*}
 Let %$z=(x,t)$ be a space-time point in $\mathbb{R}^{n+1}$ and%
 $0 < t_1 < t_{2} \leq T$ and let $K$ be any domain in $\Omega$.
We denote a parabolic cylinder by $K_{t_1, t_{2}} := K \times (t_1, t_{2})$. We recall the Sobolev embedding of parabolic type.
\begin{prop}[\cite{DiBenedetto1}]\label{parabolic embedding1}
There exists a constant $C$ depending only on $n,p,r$ such that for every $v \in L^\infty(t_1, t_2 ; L^r (K)) \cap L^p (t_1, t_2 ; W_0^{1, p} (K))$
\begin{align}\label{parabolic embedding1 ineq}
\int_{K_{t_1, t_{2}}}|v|^{p\frac{n+r}{n}}\,dz \notag \leq C \left(\int_{K_{t_1, t_{2}}} |\nabla v|^p\,dz \right) \left(\esssup_{t_{1}<t<t_{2}}\int_\Omega |v|^r\,dx \right)^\frac{p}{n}
\end{align}
\end{prop}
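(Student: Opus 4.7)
The plan is to combine the spatial Sobolev inequality with a Hölder interpolation at each time slice and then integrate in $t$. By Fubini, $v(\cdot, t) \in W_0^{1,p}(K)$ for almost every $t \in (t_1, t_2)$, so extending by zero and applying the Sobolev--Gagliardo--Nirenberg inequality will give
\[
\left(\int_K |v(x,t)|^{p^*}\, dx\right)^{p/p^*} \leq C(n,p) \int_K |\nabla v(x,t)|^p\, dx,
\]
where $p^* = np/(n-p)$ (which is well-defined since our standing hypothesis forces $p<n$).

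Next, using the elementary identity $p(n+r)/n = p + pr/n$, I would split $|v|^{p(n+r)/n} = |v|^p \cdot |v|^{pr/n}$ and apply spatial Hölder with conjugate exponents $\alpha = n/(n-p)$ and $\alpha' = n/p$. This choice is engineered precisely so that $|v|^{p\alpha} = |v|^{p^*}$ in the first factor and $|v|^{(pr/n)\alpha'} = |v|^{r}$ in the second, producing, at each fixed $t$,
\[
\int_K |v(x,t)|^{p(n+r)/n}\, dx \leq \left(\int_K |v|^{p^*}\, dx\right)^{(n-p)/n} \left(\int_K |v|^{r}\, dx\right)^{p/n}.
\]
Inserting the Sobolev estimate into the first factor absorbs the $(n-p)/n$ power and leaves a clean $\int_K |\nabla v(\cdot, t)|^p\, dx$.

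Finally, I would integrate the resulting pointwise-in-time inequality over $(t_1, t_2)$. Since the $L^r$ piece is dominated by the time-independent quantity $(\esssup_{t_1<t<t_2}\int_K |v|^r\,dx)^{p/n}$, it can be pulled outside the $t$-integral, yielding exactly \eqref{parabolic embedding1 ineq}. The argument is essentially routine; the only delicate point is fixing the Hölder exponents so that the right-hand side displays the powers $1$ and $p/n$ on the $L^p$-gradient and $L^r$ norms respectively, and that choice is forced by the form of the target exponent $p(n+r)/n$.
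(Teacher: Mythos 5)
Your argument is correct and is essentially the standard proof of this parabolic Sobolev embedding: slice-wise Gagliardo--Nirenberg--Sobolev for $W_0^{1,p}$ (extension by zero), Hölder interpolation with exponents $n/(n-p)$ and $n/p$ at each time, and integration in $t$ after pulling out the essential supremum of the $L^r$ norm. The paper itself gives no proof but cites \cite{DiBenedetto1}, where the argument is the same interpolation; note also that the $\Omega$ in the displayed essential supremum should be read as $K$, exactly as you have it.
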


We next use so-called \textit{fast geometric convergence} which will be employed later on many times. See \cite{DiBenedetto1} for details.

\begin{lem}[Fast geometric convergence, \cite{DiBenedetto1}]\label{Fast geometric convergence}
Let $\{Y_m\}_{m=0}^\infty$ be a sequence of positive numbers, satisfying the recursive inequlities
\begin{equation}
Y_{m+1} \leq Cb^mY_m^{1+\alpha},\quad m=0,1,\ldots,
\end{equation}
where $C,b>1$ and $\alpha>0$ are given constants independent of $m$. If the initial value $Y_0$ satisfies
\begin{equation}
Y_0 \leq C^{-1/\alpha}b^{-1/\alpha^2},
\end{equation}
then $\lim \limits_{m \to \infty}Y_m=0$.
\end{lem}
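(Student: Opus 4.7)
The plan is to prove the decay by guessing a geometric ansatz of the form $Y_m \leq Y_0\, r^m$ for a suitable ratio $r \in (0,1)$, and verifying it by induction. The recursion $Y_{m+1} \leq C b^m Y_m^{1+\alpha}$ has a built-in scale $b$, so the natural candidate for the decay rate is $r = b^{-1/\alpha}$, which is the unique ratio that makes the recursion self-consistent when $Y_0^{\alpha}$ is sufficiently small.

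First I would set up the induction with the hypothesis $Y_m \leq Y_0\, b^{-m/\alpha}$. The base case $m = 0$ is immediate. For the induction step, I would substitute the inductive bound into the recurrence:
\begin{equation*}
Y_{m+1} \leq C b^m \bigl(Y_0\, b^{-m/\alpha}\bigr)^{1+\alpha}
= C Y_0^{1+\alpha}\, b^{m - m(1+\alpha)/\alpha}
= C Y_0^{1+\alpha}\, b^{-m/\alpha}.
\end{equation*}
To close the induction I need $C Y_0^{1+\alpha}\, b^{-m/\alpha} \leq Y_0\, b^{-(m+1)/\alpha}$, which after cancelling reduces exactly to the condition $C Y_0^{\alpha} \leq b^{-1/\alpha}$, i.e. $Y_0 \leq C^{-1/\alpha} b^{-1/\alpha^2}$. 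This is precisely the smallness hypothesis on the initial value, and the $m$-dependence conveniently drops out, which is what makes the ansatz work.

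Since $b > 1$ implies $b^{-1/\alpha} < 1$, the bound $Y_m \leq Y_0\, b^{-m/\alpha}$ forces $Y_m \to 0$ as $m \to \infty$. I do not foresee any real obstacle here; the only subtlety is choosing the correct geometric rate so that the $b^m$ factor in the recursion is exactly absorbed by the nonlinear gain $Y_m^{1+\alpha}$, and the smallness condition on $Y_0$ is tight enough to match.
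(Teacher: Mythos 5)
Your proof is correct, and the induction on the ansatz $Y_m \leq Y_0\, b^{-m/\alpha}$, closed exactly by the smallness condition $Y_0 \leq C^{-1/\alpha} b^{-1/\alpha^2}$, is precisely the classical argument from the cited reference \cite{DiBenedetto1}, which the paper itself invokes without reproducing the proof. Nothing is missing: the base case, the cancellation of the $m$-dependence in the inductive step, and the conclusion from $b^{-1/\alpha}<1$ are all in order.
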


We also need a fundamental algebraic inequality, associated with the $p$ -Laplace operator (see \cite{Barrett-Liu}).
\begin{lem}\label{Barrett-Liu}
For all $p \in (1,\infty)$ there exist positive constants $C_{1}(p,n)$ and $C_{2}(p,n)$ such that for all $\xi,\,\eta \in \mathbb{R}^{n}$
\begin{equation}\label{BL1}
||\xi|^{p-2}\xi-|\eta|^{p-2}\eta| \leq C_{1}(|\xi|+|\eta|)^{p-2}|\xi-\eta|
\end{equation}
and
\begin{equation}\label{BL2pre}
(|\xi|^{p-2}\xi-|\eta|^{p-2}\eta)\cdot (\xi-\eta) \geq C_2 (|\xi|+|\eta|)^{p-2}|\xi-\eta|^2,
\end{equation}
where dot $\cdot $ denotes the inner product in $\mathbb{R}^{n}$.  In particular, if $p\geq 2$, then
\begin{equation}\label{BL2}
(|\xi|^{p-2}\xi-|\eta|^{p-2}\eta)\cdot (\xi-\eta) \geq C_2 |\xi-\eta|^p.
\end{equation}
\end{lem}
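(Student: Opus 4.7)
The plan is to establish all three inequalities by integrating the derivative of the vector field $F(w):=|w|^{p-2}w$ along the straight segment joining $\eta$ and $\xi$. On $\mathbb{R}^n \setminus \{0\}$ the map $F$ is $C^1$ with Jacobian
\begin{equation*}
DF(w) = |w|^{p-2}\left(I + (p-2)\,\frac{w\otimes w}{|w|^2}\right),
\end{equation*}
so that, setting $w(t):=\eta + t(\xi-\eta)$ and assuming first that the segment avoids the origin,
\begin{equation*}
|\xi|^{p-2}\xi-|\eta|^{p-2}\eta = \int_0^1 DF(w(t))(\xi-\eta)\,dt.
\end{equation*}

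For \eqref{BL1} I would note that the operator norm of $DF(w)$ is at most $(p-1)|w|^{p-2}$, since the eigenvalues of $I+(p-2)\frac{w\otimes w}{|w|^2}$ are $1$ (with multiplicity $n-1$) and $p-1$. This reduces \eqref{BL1} to the upper bound $\int_0^1 |w(t)|^{p-2}\,dt \le C(|\xi|+|\eta|)^{p-2}$. For $p\ge 2$ this is immediate from $|w(t)|\le |\xi|+|\eta|$. For $1<p<2$ the integrand can be singular, so I would split into the two regimes $|\xi-\eta|\ge \tfrac12(|\xi|+|\eta|)$ (where both sides of \eqref{BL1} are directly comparable to $|\xi-\eta|^{p-1}$) and $|\xi-\eta|<\tfrac12(|\xi|+|\eta|)$ (where $|w(t)|\ge \tfrac14(|\xi|+|\eta|)$ uniformly in $t$, giving $|w(t)|^{p-2}\le C(|\xi|+|\eta|)^{p-2}$).

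For \eqref{BL2pre} I would compute the quadratic form
\begin{equation*}
DF(w)\zeta\cdot\zeta = |w|^{p-2}|\zeta|^2 + (p-2)|w|^{p-4}(w\cdot\zeta)^2 \ge \min\{1,p-1\}\,|w|^{p-2}|\zeta|^2
\end{equation*}
applied to $\zeta=\xi-\eta$. The task then becomes the reverse estimate $\int_0^1 |w(t)|^{p-2}\,dt \ge c(|\xi|+|\eta|)^{p-2}$. For $1<p<2$ the inequality $|w(t)|\le |\xi|+|\eta|$ immediately gives $|w(t)|^{p-2}\ge (|\xi|+|\eta|)^{p-2}$ and I am done after integrating over $t\in[0,1]$. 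For $p\ge 2$ the bound needs a small argument: in the regime $|\xi-\eta|<\tfrac12(|\xi|+|\eta|)$ one has $|w(t)|\ge\tfrac14(|\xi|+|\eta|)$, yielding the lower bound, while in the opposite regime $|\xi-\eta|\ge \tfrac12(|\xi|+|\eta|)$ the inequality reduces to \eqref{BL2} on the segment $t\in[\tfrac13,\tfrac23]$ where $|w(t)|$ is comparable to $|\xi|+|\eta|$. Finally, \eqref{BL2} in the case $p\ge 2$ is a direct corollary of \eqref{BL2pre} once we observe that $|\xi-\eta|\le |\xi|+|\eta|$ and $p-2\ge 0$ imply $(|\xi|+|\eta|)^{p-2}|\xi-\eta|^2\ge |\xi-\eta|^p$.

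The main obstacle is the range $1<p<2$, where the integrand $|w(t)|^{p-2}$ is singular at points where the segment hits the origin, and the two-sided bounds on $\int_0^1|w(t)|^{p-2}\,dt$ cannot be obtained by a single uniform estimate. The case split based on the size of $|\xi-\eta|$ relative to $|\xi|+|\eta|$, together with an approximation argument to justify the use of $DF$ when the segment passes through zero (e.g., replacing $F$ by $F_\varepsilon(w):=(|w|^2+\varepsilon)^{(p-2)/2}w$ and sending $\varepsilon\to 0$), resolves this difficulty.
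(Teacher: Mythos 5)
The paper itself gives no proof of this lemma at all (it is simply quoted from Barrett--Liu \cite{Barrett-Liu}), so there is nothing in-paper to compare with; your integral-of-the-Jacobian argument is the standard textbook route and its skeleton is sound. Writing $|\xi|^{p-2}\xi-|\eta|^{p-2}\eta=\int_0^1 DF(w(t))(\xi-\eta)\,dt$ with $F(w)=|w|^{p-2}w$, using the eigenvalues $1$ and $p-1$ of $I+(p-2)\,w\otimes w/|w|^2$, and splitting according to the size of $|\xi-\eta|$ relative to $|\xi|+|\eta|$ is exactly how \eqref{BL1} and \eqref{BL2pre} are usually proved, and your key geometric claim in the regime $|\xi-\eta|<\tfrac12(|\xi|+|\eta|)$ is correct: with $s:=|\xi-\eta|$ one has $|w(t)|\ge\max\{|\eta|-ts,\ |\xi|-(1-t)s\}\ge\tfrac12(|\xi|+|\eta|-s)>\tfrac14(|\xi|+|\eta|)$ for all $t\in[0,1]$. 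One small slip: for $1<p<2$ the operator norm of $DF(w)$ is $|w|^{p-2}$, not $(p-1)|w|^{p-2}$, since the largest eigenvalue is then $1$; write $\max\{1,p-1\}\,|w|^{p-2}$ throughout (this only changes constants).

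The genuine flaw is in the case $p\ge2$, $|\xi-\eta|\ge\tfrac12(|\xi|+|\eta|)$ of \eqref{BL2pre}: the claim that $|w(t)|$ is comparable to $|\xi|+|\eta|$ for $t\in[\tfrac13,\tfrac23]$ is false. Take $\xi=-\eta\neq0$: then $w(\tfrac12)=0$, so the middle third is precisely where the segment may degenerate. The step is easily repaired. Since $w(t)$ traverses a straight line with speed $|\xi-\eta|$, the set $\{t\in[0,1]:|w(t)|<\tfrac18|\xi-\eta|\}$ is an interval of length at most $\tfrac14$, so $\int_0^1|w(t)|^{p-2}\,dt\ge\tfrac34\,8^{2-p}|\xi-\eta|^{p-2}\ge\tfrac34\,8^{2-p}2^{2-p}(|\xi|+|\eta|)^{p-2}$, using $|\xi-\eta|\ge\tfrac12(|\xi|+|\eta|)$ and $p\ge2$; alternatively, work on the third of the segment adjacent to whichever endpoint has the larger norm, where $|w(t)|\ge\tfrac12(|\xi|+|\eta|)-\tfrac13|\xi-\eta|\ge\tfrac16(|\xi|+|\eta|)$. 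With this correction, the constant fix above, and the $\varepsilon$-regularization $F_\varepsilon(w)=(|w|^2+\varepsilon)^{(p-2)/2}w$ you already propose for segments through the origin, the argument is complete, including the final deduction of \eqref{BL2} from \eqref{BL2pre} via $|\xi-\eta|\le|\xi|+|\eta|$.
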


%%%%%%%%%%%%%%%%%%%
%%%%%%%%%%%%%%%%%%%
%%%%%%%%%%%%%%%%%%%
%%%%%%Section3%%%%%%%%%%%%%

\section{Doubly nonlinear equations of $p$-Sobolev flow type}\label{Sec. Doubly nonlinear equations of p-Sobolev flow type}
 Let $T \leq \infty$. %and $M \in [0,\infty)$%
 We study the following a doubly nonlinear equation of \textit{$p$-Sobolev flow type}:
\begin{equation}\label{maineq}
\begin{cases}
\partial_{t}(|u|^{q-1}u)-\Delta_pu=c|u|^{q-1}u \quad \mathrm{in}\quad \Omega_T\\
 0\leq u \leq M \quad \mathrm{on}\quad \partial_{p}\Omega_{T},
\end{cases}
\end{equation}
where $u=u(x,t):\Omega_{T} \to \mathbb{R}$ be unknown real valued function, and $c$ and $M$ are nonnegative constant and positive one, respectively. This section is devoted to some a priori estimates of a weak solution to (\ref{maineq}).
Firstly, we recall the definition of weak solution of (\ref{maineq}).
\begin{dfn}\label{def}\normalfont
 A measurable function $u$ defined on $\Omega_{T}$ is called a \textit{weak supersolution (subsolution)} to (\ref{maineq}) if the following (D1)-(D3) are satisfied.
 \begin{enumerate}[(D1)]
 \item $u \in L^{\infty}(0,T\,;\,W^{1,p}(\Omega))$; \,\, $\partial_{t}(|u|^{q-1}u) \in L^{2}(\Omega_{T})$
 \item For every nonnegative $\varphi \in C^{\infty}_{0}(\Omega_T) $,
 \begin{equation*}-\int_{\Omega_T}|u|^{q-1}u\varphi_{t}\,dz+\int_{\Omega_T}|\nabla u|^{p-2}\nabla u\cdot \nabla \varphi\,dz \geq (\leq) c\int_{\Omega_T} |u|^{q-1}u\varphi dz.
 \end{equation*}
 \item $0 \leq u \leq M $ on $\partial_p \Omega_T$ in the trace sense: \\
$(-u(t))_+,\,( (u(t))_+-M)_+ \in W^{1, p}_0 (\Omega)$ for almost every $t \in (0, T)$;
\begin{equation*}\int_{\Omega}(-u(x,t))_+^{q+1}dx,  \quad \int_{\Omega}\left( (u(x,t))_+-M\right)_+^{q+1} \,dx \rightarrow 0 \quad \textrm{as}\quad  t\searrow 0.
\end{equation*}
 \end{enumerate}
A measurable function $u$ defined on $\Omega \times [0,T]$ is called a \textit{weak solution} to (\ref{maineq}) if it is simultaneously a weak sub and supersolution; that is,
\begin{equation*}-\int_{\Omega_T}|u|^{q-1}u\varphi_{t}\,dz+\int_{\Omega_T}|\nabla u|^{p-2}\nabla u\cdot \nabla \varphi\,dz = c\int_{\Omega_T} |u|^{q-1}u\varphi dz
 \end{equation*}
 for every $\varphi \in C^{\infty}_{0}(\Omega_T). $
 \end{dfn}

Similarly,

\begin{dfn}\label{def of weak sol.}\normalfont
%Suppose that $\lambda \in L^{\infty}(0,T)$.  %
A measurable function $u$ defined on $\Omega_{T}$ is called a \textit{weak solution} of \eqref{pS}
%$$\partial_tu^{q}-\Delta_pu=\lambda(t) u^q\quad \textrm{in}\quad \Omega_\infty $$
 if the following (D1)-(D4) are satisfied.
 \begin{enumerate}[(D1)]
 \item $u \in L^{\infty}(0,T\,;\,W^{1,p}(\Omega))$; \,\, $\partial_{t}(|u|^{q-1}u) \in L^{2}(\Omega_{T})$
 \item There exists a function $\lambda (t) \in L^1 (0, T)$ such that, for every $\varphi \in C^{\infty}_{0}(\Omega_T) $,
 \begin{equation*}-\int_{\Omega_T}|u|^{q-1}u\varphi_{t}\,dz+\int_{\Omega_T}|\nabla u|^{p-2}\nabla u\cdot \nabla \varphi\,dz = \int_{\Omega_T} \lambda(t) |u|^{q-1}u\varphi dz.
 \end{equation*}
 \item $\|u (t)\|_{L^{q +1} (\Omega)} = 1$ for all $t \ge 0$.
 \item $u(0) = u_0$ in $\Omega$ and $u = 0 $ on $\partial_p \Omega \times (0,T)$ in the trace sense: \\
 $u(t) \in W^{1, p}_0 (\Omega)$ for almost every $t \in (0, T)$;
\[
\|u (t) - u_0\|_{L^{q+1} \Omega)}\rightarrow 0 \quad \textrm{as}\quad  t\searrow 0.
\]
%
%\begin{equation*}
% \int_{\Omega}|u(x,t)|^{q-1}|u(x,t)-u_0(x)|^{2} \, dx \rightarrow 0 \quad \textrm{as}\quad  t\searrow 0.
%\end{equation*}
%
 \end{enumerate}
 \end{dfn}

\begin{rmk}\normalfont
A solution of our $p$-Sobolev flow equation (\ref{pS}) is a subsolution of (\ref{maineq}) with $c=\|\nabla u_0\|_{L^p(\Omega)}^p$ and a supersolution of (\ref{maineq})
with $c=0$, respectively. See the energy estimate \eqref{lambdaineq} in Proposition \ref{Energy equality pS} below.
\end{rmk}

%%%%%%%%%%%%%%%%%%%%%%%%%%
\subsection{Nonnegativity and boundedness}
%%%%%%%%%%%%%%%%%%%%%%%%%%

We next claim that a weak supersolutions to (\ref{maineq}) are nonnegative, i.e., they satisfy the weak minimum principle.
\begin{prop}[Nonnegativity]\label{Nonnegativity prop}
A weak supersolution $u$ to (\ref{maineq}) satisfies
\begin{equation}\label{nonnegativity}
u \geq 0 \quad \textrm{in}\quad \Omega_T.
\end{equation}
\end{prop}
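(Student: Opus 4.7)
The plan is to test the supersolution inequality against the nonnegative quantity $\varphi = (-u)_+$, which is an admissible cutoff because hypothesis (D3) gives $(-u(t))_+ \in W^{1,p}_0(\Omega)$ for a.e. $t$, and then to apply a Gronwall-type argument in time. First I would fix $\tau \in (0,T)$ and, after a Steklov regularization in time to make the weak formulation compatible with non-smooth test functions (this is where the hypothesis $\partial_t(|u|^{q-1}u) \in L^2(\Omega_T)$ from (D1) is used), insert $\varphi = (-u)_+\,\chi_{(0,\tau)}$ into the inequality of Definition~\ref{def}.

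The spatial term simplifies because $\nabla(-u)_+ = -\nabla u$ on $\{u<0\}$ and vanishes elsewhere, so
\[
\int_{\Omega_\tau}|\nabla u|^{p-2}\nabla u\cdot\nabla\varphi\,dz = -\int_{\Omega_\tau}|\nabla(-u)_+|^{p}\,dz,
\]
while the lower-order term yields $-c\int_{\Omega_\tau}(-u)_+^{q+1}\,dz$ because $|u|^{q-1}u\,(-u)_+ = -(-u)_+^{q+1}$. For the parabolic term, the chain rule (analogous to \eqref{eq. of A^{-}}) applied to the primitive $H(u):=-\tfrac{q}{q+1}(-u)_+^{q+1}$, which satisfies $H'(u) = q|u|^{q-1}(-u)_+$, gives
\[
\int_{0}^{\tau}\!\!\int_{\Omega}\partial_t(|u|^{q-1}u)\,(-u)_+\,dx\,dt
\;=\; -\tfrac{q}{q+1}\!\int_{\Omega}(-u(\tau))_+^{q+1}\,dx,
\]
where the initial contribution vanishes by the last part of (D3). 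Combining these three pieces transforms the supersolution inequality into
\[
\tfrac{q}{q+1}\!\int_{\Omega}(-u(\tau))_+^{q+1}dx \;+\;\int_{0}^{\tau}\!\!\int_{\Omega}|\nabla(-u)_+|^{p}\,dx\,dt \;\le\; c\int_{0}^{\tau}\!\!\int_{\Omega}(-u)_+^{q+1}\,dx\,dt.
\]

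Defining $\Phi(\tau):=\int_{\Omega}(-u(\tau))_+^{q+1}\,dx$, the inequality above implies $\Phi(\tau)\le \tfrac{(q+1)c}{q}\int_0^\tau\Phi(t)\,dt$, and since $\Phi$ is bounded on $(0,T)$ by (D1) and $\Phi(0)=0$ by (D3), Gronwall's lemma yields $\Phi\equiv 0$, hence $(-u)_+=0$, i.e.\ $u\ge 0$ a.e.\ in $\Omega_T$. The only real technical obstacle is the rigorous justification of the test function $(-u)_+$: the weak formulation only permits $\varphi\in C^\infty_0(\Omega_T)$, so one must introduce Steklov averages $[|u|^{q-1}u]_h$ in the parabolic term, pass to the limit using the $L^2$-regularity of $\partial_t(|u|^{q-1}u)$, and handle the endpoint $t=\tau$ by a further density/Lebesgue-point argument. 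Once this regularization is in place, the estimate is routine. (The same scheme, applied with $\varphi=((u)_+-M)_+$, will later produce the upper bound $u\le M$ used in the maximum principle.)
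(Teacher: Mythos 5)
Your proposal is correct and follows essentially the same route as the paper: testing with $(-u)_+$ (regularized in time), rewriting the parabolic term via a primitive --- your $H(u)=-\tfrac{q}{q+1}(-u)_+^{q+1}$ is exactly the paper's $A^+\bigl((-u)_+\bigr)$ with $k=0$ --- discarding the good-sign gradient term, using (D3) to kill the initial contribution, and closing with Gronwall. The only cosmetic differences are that the paper passes to $-u$ as a subsolution and uses a Lipschitz time cut-off $\sigma_{t_1,t}$ instead of your Steklov averaging.
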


\begin{proof}
If $u$ is a weak supersolution to (\ref{maineq}), $-u$ is a weak subsolution. We note by (D1) in Definition \ref{def} that
$\partial_t (|u|^{q-1}u) \in L^2(\Omega_T)$ and that $(-u)_+ \in L^\infty (0,T\,; W^{1,p}_0 (\Omega)) \subset L^{q + 1} (\Omega_T) \subset L^2 (\Omega_T)$. Let  $0 < t_1 < t \le T$ be arbitrarily taken and fixed.
Put $\Omega_{t_1, t}=\Omega \times (t_1, t)$. Let $\delta$ be any positive number
such that $\delta \le (t - t_1)/3$. We define a Lipschitz cut-off function on time, $\sigma_{t_1, t}$
such that
\begin{equation*}
0 \leq \sigma_{t_1,t} \leq 1,\quad \sigma_{t_{1}, t} = 1
\quad\textrm{in}\quad  (t_1+\delta, t-\delta)
\quad
\textrm{and}
\quad
\supp(\sigma_{t_1, t}) \subset (t_1, t).
\end{equation*}
\begin{center}
\begin{tikzpicture}[domain=-1:5, samples=70, thick,scale=1.3]
\draw[thin, ->] (-1.1,0)--(2.3,0) node[right] {\footnotesize $s$}; % t-axis
\draw[thin, ->] (-0.2,-0.5)--(-0.2,1.5) node[above] {\footnotesize $y$}; % y-axis
 \draw (0.3,0)node[below]{\footnotesize $t_{1}+\delta$};
\filldraw(0.3,0) circle (0.03);
\filldraw(1.7,0) circle (0.03);
 \draw (1.7,0)node[below]{\footnotesize $t-\delta$};
\draw[thin, densely dotted] (0.3,0) -- (0.3,1.2);
\draw[thin, densely dotted] (1.7,0) -- (1.7,1.2);
\draw[thin, densely dotted] (-0.2,1.2) -- (0.3,1.2);
\draw (-0.3,1.2)node {\footnotesize$1$};
\draw [domain=0:0.3]plot(\x, {4 * \x  }); % line1
\draw [domain=1.7:2.0]plot(\x, {-4 * \x +8  }); % line2
 \draw (0.3,1.2)--(1.7,1.2) node [above] {$\sigma_{t_{1},t}$}; %line3
\end{tikzpicture}
\end{center}
\noindent
A function $(-u)_+\sigma_{t_1, t}$ is an admissible test function in (D2) of Definition \ref{def}. From (D2) of Definition \ref{def} we obtain that
\begin{align}\label{eq1 of nonnegativity}
\int_{\Omega_{t_1, t}}\partial_{t}(|u|^{q-1}(-u))(-u)_{+}\sigma_{t_1, t}\,dz&+\int_{\Omega_{t_1, t}}|\nabla u|^{p-2}\nabla (-u)\cdot \nabla\left((-u)_{+}\sigma_{t_1, t} \right)\,dz \notag \\
&\leq c\int_{\Omega_{t_1, t}}|u|^{q-1}(-u)(-u)_{+}\sigma_{t_1, t}\,dz.
\end{align}
The first integral on the left hand side of (\ref{eq1 of nonnegativity}) is computed as
\begin{align}\label{eq2 of nonnegativity}
\int_{\Omega_{t_1, t}}\partial_{t}(|u|^{q-1}(-u))(-u)_{+}\sigma_{t_1, t}\,dz
&=\int_{\Omega_{t_1, t}}\partial_t A^+ ((- u)_+)\sigma_{t_1, t}dz \notag \\
&=\int_{\Omega}A^+((-u)_+\sigma_{t_1, t} )\,dx \bigg|_{t_1}^t-\int_{\Omega_{t_1, t}}A^+((-u)_+) \partial_t \sigma_{t_1, t}dz \notag \\
&=\frac{1}{\delta}\left(\int_{t -\delta}^t-\int_{t_1}^{t_1 + \delta}\right)\int_\Omega A^+((-u (s))_+)dxds
\end{align}
which, as $\delta \to 0$, converges to
\begin{multline}\label{eq2' of nonnegativity}\int_\Omega
A^+ ((-u(t))_+)dx-\int_\Omega A^+ ((-u(t_1))_+)dx
\\
=\frac{q}{q+1}
\int_\Omega(-u(t))_+^{q+1}dx-\frac{q}{q+1}
\int_\Omega (-u(t_1))_+^{q+1}dx,
\end{multline}
where by (D3) in Definition \ref{def} we have that
\begin{equation}\label{eq2'' of nonnegativity}
\displaystyle \lim_{t_1 \rightarrow 0}
\int_\Omega(-u (t_1))_+^{q+1}dx
\rightarrow 0\quad \textrm{as}\quad  t_1 \rightarrow 0.
\end{equation}
The second integral on the left hand side of (\ref{eq1 of nonnegativity}) is bounded from below as
\begin{equation}\label{eq3 of nonnegativity}
 \int_{\Omega_{t}}|\nabla (-u)_{+}|^{p}\sigma_{t_1, t}\,dz \geq 0.
\end{equation}
Taking the limit in (\ref{eq1 of nonnegativity}) as $\delta \searrow 0$ and $t_1 \searrow 0$, and combining (\ref{eq2 of nonnegativity}), (\ref{eq2' of nonnegativity}), (\ref{eq2'' of nonnegativity}) with (\ref{eq3 of nonnegativity}), we get
\begin{equation*}
\frac{q}{q+1}\int_{\Omega}(-u(t))_{+}^{q+1}\,dx \leq c\int_0^t\int_{\Omega}(-u(\tau))_+^{q+1}\,dxd\tau
\end{equation*}
and, by Gronwall's lemma,
\begin{equation*}
 \int_{\Omega}(-u(t))_{+}^{q+1}\,dx \leq 0
\end{equation*}
since again, by (D3) of Definition \ref{def}, $(-u(t))_+ \to 0$ in $L^{q+1} (\Omega)$
as $t \searrow 0$. Thus we have $-u(x,t) \leq 0$ \,for  $(x,t) \in \Omega_{T}$ and the claim is verified.
\end{proof}
We next show the boundedness of the solution.
\begin{prop}[Boundedness]\label{Boundedness}
Let $u$ be a weak subsolution of (\ref{maineq}) such that
$(u(t))_+ \in W^{1, p}_0 (\Omega)$ for almost every $t \in (0, T)$. Then
\begin{equation}\label{boundedness}
\|(u(t))_+ \|_{L^\infty (\Omega)} \leq e^{c T/q}\|u_{0}\|_{L^{\infty}(\Omega)}.
\end{equation}

\end{prop}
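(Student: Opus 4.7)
The plan is to run a Moser $L^{q+\beta}$-iteration in the spirit of the energy argument used in the proof of Proposition~\ref{Nonnegativity prop}, but now testing against positive powers of $u_+$ and then sending the power to infinity. The exponential factor $e^{cT/q}$ is what one gets from a straight Gronwall estimate on the resulting $L^{q+\beta}$-norm, and the factor $1/q$ will appear naturally from differentiating $u_+^{q+\beta}$ and dividing by $q+\beta$.

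Formally I would test the subsolution inequality (D2) with $\varphi_\beta = u_+^{\beta}\,\sigma_{\epsilon,t}$ for a parameter $\beta \geq 1$ and the Lipschitz time cutoff $\sigma_{\epsilon,t}$ from the proof of Proposition~\ref{Nonnegativity prop}. Since $u_+$ is not a priori bounded, I would first approximate by $\phi_L = \min(u_+, L)^{\beta}\,\sigma_{\epsilon,t}$, which lies in $W^{1,p}_0(\Omega)\cap L^\infty(\Omega)$ thanks to (D3), handle $\partial_t(|u|^{q-1}u)\in L^{2}$ through Steklov averaging as allowed by (D1), and finally pass $L\to\infty$ by monotone convergence. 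On $\{u>0\}$ one has the pointwise identity
\begin{equation*}
\partial_t(|u|^{q-1}u)\,u_+^{\beta} = \frac{q}{q+\beta}\,\partial_t\bigl(u_+^{q+\beta}\bigr),
\end{equation*}
while on $\{u\leq 0\}$ both sides vanish. The diffusion term produces the nonnegative contribution $\beta\int_\Omega u_+^{\beta-1}|\nabla u_+|^p\,dx$, which I discard. After letting $\epsilon\to 0$ in the time cutoff, and using the initial trace assertion from (D3) to identify $u_+(\cdot,0) \leq \|u_0\|_{L^\infty(\Omega)}$ in $L^{q+\beta}(\Omega)$, I obtain the differential inequality
\begin{equation*}
\frac{d}{dt}\int_\Omega u_+^{q+\beta}(x,t)\,dx \;\leq\; \frac{c(q+\beta)}{q}\int_\Omega u_+^{q+\beta}(x,t)\,dx \qquad \text{for a.e. } t\in(0,T).
\end{equation*}

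Gronwall's lemma then yields, for every $t\in[0,T]$,
\begin{equation*}
\int_\Omega u_+^{q+\beta}(x,t)\,dx \;\leq\; e^{c(q+\beta)t/q}\int_\Omega u_0^{q+\beta}(x)\,dx \;\leq\; e^{c(q+\beta)t/q}\,\|u_0\|_{L^\infty(\Omega)}^{q+\beta}\,|\Omega|,
\end{equation*}
so that $\|u_+(t)\|_{L^{q+\beta}(\Omega)} \leq e^{ct/q}\|u_0\|_{L^\infty(\Omega)}\,|\Omega|^{1/(q+\beta)}$. Sending $\beta\to\infty$ gives $\|u_+(t)\|_{L^\infty(\Omega)} \leq e^{ct/q}\|u_0\|_{L^\infty(\Omega)}$, and bounding $t\leq T$ produces the claimed estimate \eqref{boundedness}.

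The main obstacle I expect is the rigorous justification of the $t$-derivative step: hypothesis (D1) only gives $\partial_t(|u|^{q-1}u)\in L^{2}(\Omega_T)$ rather than direct time regularity of $u$ itself, so one cannot simply differentiate $u_+^{q+\beta}$ in $t$. This is handled by the standard Steklov-average argument combined with the truncation $\min(u_+,L)$, but care is needed to keep the constants uniform in $L$ so that monotone convergence can be applied in all three terms of the energy inequality, and to recover the initial value correctly from the $L^{q+1}$-trace condition in (D3).
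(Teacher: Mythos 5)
Your argument is correct in substance, but it takes a genuinely different route from the paper. The paper does not iterate over exponents at all: it tests (D2) with $e^{-ct}\sigma_{t_1,t}\,\phi_\delta(u)$, where $\phi_\delta(u)=\min\{1,(e^{-ct/q}u-M)_+/\delta\}$ and $M=\|u_0\|_{L^\infty(\Omega)}$, discards the nonnegative diffusion term, and lets $\delta\searrow0$ to conclude $\int_\Omega\partial_t\,(e^{-ct}u^q-M^q)_+\,dx\le 0$; combined with the initial trace condition this gives directly $(u)_+\le e^{ct/q}M$, i.e.\ a comparison with the explicit spatially constant supersolution $e^{ct/q}M$ of the ODE part. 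Your Moser-type scheme (test with truncations of $u_+^{\beta}$, Gronwall in $L^{q+\beta}$, then $\beta\to\infty$) reaches the same constant $e^{ct/q}$ and is a legitimate alternative, at the price of exactly the technicalities you flag: since (D1) only gives $u(t)\in W^{1,p}(\Omega)\subset L^{q+1}(\Omega)$, the quantity $\int_\Omega u_+^{q+\beta}\,dx$ is not a priori finite for $\beta>1$, so you must run Gronwall at the truncated level (with $G_L(u)=\int_0^{u_+}q s^{q-1}\min(s,L)^{\beta}\,ds$, noting $u_+^{q}\min(u_+,L)^{\beta}\le\frac{q+\beta}{q}G_L(u)$ uniformly in $L$) and only afterwards let $L\to\infty$ by monotone convergence; applying Gronwall to the possibly infinite untruncated integral, as your sketch orders it, would be circular. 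Similarly, (D3) only gives $((u(t))_+-M)_+\to0$ in $L^{q+1}(\Omega)$, so the identification of the initial datum must also be done at the truncated level (or via the uniform $L^{q+1}$ bound from (D1)), not directly in $L^{q+\beta}$. With those two points arranged in the right order your proof goes through; the paper's barrier argument avoids them entirely and is shorter, while yours has the merit of not requiring the explicit exponential barrier and of adapting readily to time-dependent coefficients such as $\lambda(t)$.
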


\begin{proof}
We will follow the similar argument as in \cite{Alt-Luckhaus}. Set $M:=\|u_{0}\|_{L^{\infty}(\Omega)}$, so that $0 \leq u \leq M$ on $\partial_p \Omega_T$.  Let us define, for a small $\delta>0$, the Lipschitz truncated function~$\phi_{\delta}(u)$ by
\begin{equation*}
\phi_{\delta}(u):=\min\left\{1,\,\frac{(e^{-ct/q}u-M)_{+}}{\delta}\right\},
\end{equation*}
\begin{center}
\begin{tikzpicture}[domain=-2:2, samples=70, very thick,scale=0.8]
\draw[thin, ->] (-2.2,0)--(4.5,0) node[right] {$|u|$}; % x-axis
\draw[thin, ->] (-0.5,-0.7)--(-0.5,1.8) node[above] {\footnotesize$y$}; % y-axis
\draw [domain=0:2]plot(\x, {0.6 *\x}); % line
\draw (2.5,1.2)node[above] {\footnotesize $y=\min\left\{1,\,\frac{(e^{-ct/q}|u|-M)_{+}}{\delta}\right\}$};
\draw (0.3,0)node[below]{\footnotesize $e^{ct/q}M$};
\filldraw(0,0) circle (0.03);
\filldraw[very thick](-0.5, 0)--(0,0);
\filldraw[very thick](2, 1.2)--(4,1.2);
\filldraw(2,0) circle (0.03);
\draw (2.5,0)node[below]{\footnotesize $e^{ct/q}(M+\delta)$};
\draw[thin, densely dotted] (2,0) -- (2,1.2);
\draw[thin, densely dotted] (-0.5,1.2) -- (4,1.2);
\draw (-0.8,1.2)node {\footnotesize$1$};
\end{tikzpicture}
\end{center}
where we note that the support of $\phi_\delta $ is $\{u>e^{ct/q}M\}$, and $\phi_\delta (u),\phi_\delta' (u) \in L^\infty (\Omega_T)$, $\phi_\delta(0)=0$,  and further $\phi_\delta (u)
\in L^\infty (0,T\,; W^{1,p}_0 (\Omega))$. Let $0<t_1<t\leq T$
and $\sigma_{t_1,t}$ be the same time cut-off function as in the proof of Proposition \ref{Nonnegativity prop}. The function $e^{-ct}\sigma_{t_1, t} \phi_\delta (u) $ is an admissible test function in (D2) in Definition \ref{def}.
Choose a test function as $e^{-ct}\sigma_{t_1, t}  \phi_\delta (u)$ in (D2) in Definition \ref{def} to have
\begin{equation}\label{eq1. of boundedness}
\int_{\Omega_{t_{1},t}}\partial_{t}(e^{-ct}|u|^{q-1} u)\sigma_{t_1, t}  \phi_\delta (u)\,dz+\int_{\Omega_{t_{1},t}}|\nabla u|^{p-2}\nabla u\cdot \nabla\left(e^{-ct}\sigma_{t_1, t}  \phi_\delta (u)\right)\,dz \leq 0.
\end{equation}
The first term on the left of (\ref{eq1. of boundedness}) is computed as
\begin{equation}\label{eq2. of boundedness}
\int_{\Omega_{t_{1},t}}\partial_{t}(e^{-ct} |u|^{q-1} u )\min\left\{1,\,\frac{(e^{-ct/q} u -M)_{+}}{\delta}\right\}\sigma_{t_1, t}\,dz.
\end{equation}
Since, on the support of $\phi_\delta, \{u > e^{ct/q}M\}$,
\begin{align*}
\nabla u \cdot \nabla \phi_{\delta}(u)=
%\frac{1}{|u|}\left(|\nabla u|^{2}-\bigg|\nabla u\cdot \frac{u}{|u|} \bigg|^{2}\right)\min\left\{1,\,\frac{(|u|-M)_{+}}{\delta}\right\} \notag \\
%&\qquad \qquad
\frac{1}{\delta}\chi_{\{e^{ct/q}M< u \leq e^{ct/q}(M+\delta)\}}|\nabla u|^{2},
\end{align*}
the second term  is estimated  as
\begin{align}\label{eq3. of boundedness}
%\int_{\Omega_{t_1, t}}|\nabla u|^{p-2}\frac{1}{|u|}&\left(|\nabla u|^{2}-\bigg|\nabla u\cdot \frac{u}{|u|} \bigg|^{2}\right)\min\left\{1,\,\frac{(|u|-M)_{+}}{\delta}\right\}\sigma_{t_1, t}e^{-ct}  \notag \\
%
\frac{|\nabla u|^{p}}{\delta}\chi_{\{e^{ct/q}M< u \leq e^{ct/q}(M+\delta)\}}\sigma_{t_1, t}e^{-ct} \,dz \geq 0.
\end{align}
%
%where we used Schwarz's inequality. %
Gathering (\ref{eq1. of boundedness}), (\ref{eq2. of boundedness}) and (\ref{eq3. of boundedness}), we obtain
\begin{equation}\label{eq4. of boundedness}
\int_{\Omega_{t_1, t}}\partial_{t}(e^{-ct}|u|^{q-1} u)\min\left\{1,\,\frac{(e^{-ct/q} u -M)_{+}}{\delta}\right\}\sigma_{t_1, t} \,dz \leq 0.
\end{equation}
Since  $\partial_{t}(|u|^{q-1}u)=\partial_{t} u^{q} \in L^{2}(\Omega)$ in $\{(u)_+>0\}$ by (D1) of Definition \ref{def}, it holds that $\partial_t (e^{-ct} (u)_+^q) \in L^{2}(\Omega_{T})$. Taking the limit as $\delta \searrow 0$ in (\ref{eq4. of boundedness}), by the Lebesgue dominated convergence theorem, we have that
\begin{equation*}
\int_{\Omega_{t_{1},t}}\partial_{t}(e^{-ct} u^{q})\chi_{\{ u >e^{ct/q}M\}}\,dz \leq 0,
\end{equation*}
namely,
\begin{equation}\label{eq5. of boundedness}
\int_{\Omega_{t_{1},t}}\partial_{t}(e^{-ct} u ^{q}-M^{q})_{+}dxdt \leq 0.
\end{equation}
%
%From Gronwall's lemma it follows
%
%\begin{equation*}
%\int_{\Omega}|u(t)|^{q}\chi_{\{|u(t)|>M\}}\,dx \leq 0,
%\end{equation*}
%
By (D3) in Definition \ref{def}
\begin{equation*}
\int_{\Omega}(e^{-ct_{1}} (u(t_{1}))_+^{q}-M^{q})_{+}dx \leq \int_{\Omega}((u(t_{1}))_+^{q}-M^{q})_{+}dx\to 0
\end{equation*}
%
%
%\begin{equation*}
%\int_{\Omega}|u (t_1)|^q \chi_{\{|u(t_1)|>M\}}\,dx=\int_{\Omega}(u(t_1))^q\chi_{\{u(t_{1})>M\}}\,dx \rightarrow 0
%\end{equation*}
%
as $t_1 \searrow 0$. Hence, we pass to the limit as $t_1 \searrow 0$  in (\ref{eq5. of boundedness}) to have
\begin{equation*}
\int_{\Omega}(e^{-ct} (u(t))^{q}-M^{q})_{+}dx\leq 0
\end{equation*}
if and only if $(u)_+\leq e^{ct/q}M$ \,in $\Omega  \times [0,T]$, and we arrive at the assertion.
\end{proof}

%%%%%%%%%%Comparison theorem%%%%%%%%%%
\subsection{Comparison theorem}

We recall the crucial fact, addressed by Alt-Luckhaus (\cite{Alt-Luckhaus}), the \textit{Comparison theorem} (\cite[Theorem 2.2, p.325]{Alt-Luckhaus}).  For stating it without loss of generality, we define a weak sub and super solutions. A measurable function $u$ and $v$ on $\Omega_T$ are a weak supersolution and subsolution, respectively, if the conditions (D1) and (D2) in Definition \ref{def} are satisfied.
We say that $u \geq v$ on $\partial_p \Omega_T$ in the trace sense, if
\begin{enumerate}[(D3')]
 \item $(-u (t)+v (t))_+ \in W^{1, p}_0 (\Omega)$, for almost every $t \in (0,T)$, and \\[2mm]
 $(- |u|^{q - 1} u (t)
+ |v|^{q - 1} v (t))_+ \to 0$ in $L^1 (\Omega)$ as $t \searrow 0$.
\end{enumerate}
\begin{thm}[Comparison theorem, \cite{Alt-Luckhaus}]\label{Comparison theorem}
Let $u$ and $v$ be a weak supersolution and subsolution to (\ref{maineq}) in $\Omega_T$, respectively. If $u \geq v$ in the sense of (D3')  on $\partial_p \Omega_T$, then it holds true that\begin{equation*}
u \geq v\quad in \quad \Omega_T.
\end{equation*}
\end{thm}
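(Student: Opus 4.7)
The strategy is a Gronwall closure on the quantity $t\mapsto \int_\Omega \bigl((v-u)_+(x,t)\bigr)^{q+1}\,dx$. I would test the two weak (in)equalities of (D2) against the nonnegative function $\varphi:=(v-u)_+\,\sigma_{t_1,t}$, where $\sigma_{t_1,t}$ is the Lipschitz time cutoff from the proof of Proposition~\ref{Nonnegativity prop} and $0<t_1<t\le T$. By (D1) and (D3$'$) one has $(v-u)_+\in L^\infty(0,T;W^{1,p}_0(\Omega))$, and a standard Steklov mollification in $\tau$ makes $\varphi$ an admissible test. Combining (D2) applied to $v$ (as a subsolution) and $u$ (as a supersolution) yields
\[
-\!\int_{\Omega_{t_1,t}}\!(|v|^{q-1}v-|u|^{q-1}u)\,\partial_\tau\varphi\,dz
+\!\int_{\Omega_{t_1,t}}\!(|\nabla v|^{p-2}\nabla v-|\nabla u|^{p-2}\nabla u)\cdot\nabla\varphi\,dz
\;\le\; c\!\int_{\Omega_{t_1,t}}\!(|v|^{q-1}v-|u|^{q-1}u)\,\varphi\,dz.
\]
On $\{v>u\}$ we have $\nabla\varphi=(\nabla v-\nabla u)\,\sigma_{t_1,t}$ (since $\sigma_{t_1,t}$ is $x$-independent), and the diffusion integrand is pointwise nonnegative by~\eqref{BL2pre}; that term can therefore be discarded without spoiling the inequality.

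The delicate point is the time-derivative term. By~(D1), $\partial_\tau(|u|^{q-1}u)$ and $\partial_\tau(|v|^{q-1}v)$ lie in $L^2(\Omega_T)$, which permits moving the derivative onto the difference $|v|^{q-1}v-|u|^{q-1}u$ after Steklov averaging. The Alt--Luckhaus chain rule identifies this quantity with the derivative of a nonnegative entropy
\[
\mathcal{H}(u,v):=\int_0^{(v-u)_+}\!\!\bigl((u+\eta)^q-u^q\bigr)\,d\eta,
\qquad
\mathcal{H}(u,v)\;\ge\;\frac{(v-u)_+^{\,q+1}}{q+1}.
\]
At the Steklov level one verifies $\partial_\tau(|v|^{q-1}v-|u|^{q-1}u)(v-u)_+=\partial_\tau\mathcal{H}(u,v)+r$ with a remainder $r$ vanishing in $L^1$ as the Steklov step shrinks. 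Passing sequentially to the Steklov limit, the $\delta\searrow 0$ limit in $\sigma_{t_1,t}$, and finally $t_1\searrow 0$ (where (D3$'$) forces $\int_\Omega\mathcal{H}(u(t_1),v(t_1))\,dx\to 0$ via the pointwise bound $\mathcal{H}\le(v-u)_+\cdot(|v|^{q-1}v-|u|^{q-1}u)_+$) produces
\[
\int_\Omega\mathcal{H}(u(t),v(t))\,dx
\;\le\;
c\int_0^t\!\!\int_\Omega(|v|^{q-1}v-|u|^{q-1}u)(v-u)_+\,dxd\tau.
\]

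The Gronwall closure rests on the pointwise inequality
\[
(|v|^{q-1}v-|u|^{q-1}u)(v-u)_+\;\le\;(q+1)\,\mathcal{H}(u,v),
\]
which, for $0\le u\le v$, follows by setting $f(v):=(v-u)(v^q-u^q)-(q+1)\mathcal{H}(u,v)$ and checking $f(u)=0$ and $f'(v)=qu(u^{q-1}-v^{q-1})\le 0$; the general signed case is handled via the monotonicity and convexity of $s\mapsto|s|^{q-1}s$. Substituting into the previous display and applying the classical Gronwall lemma forces $\int_\Omega\mathcal{H}(u(t),v(t))\,dx\equiv 0$, and the lower bound $\mathcal{H}\ge(v-u)_+^{q+1}/(q+1)$ yields $(v-u)_+\equiv 0$, i.e.\ $u\ge v$ throughout $\Omega_T$.

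The principal obstacle is the Alt--Luckhaus time-derivative identity: although $\partial_\tau(|u|^{q-1}u)$ and $\partial_\tau(|v|^{q-1}v)$ are only known to lie in $L^2(\Omega_T)$, and $(v-u)_+$ is not differentiable in $\tau$, the passage from the weak pairing to the monotone entropy $\mathcal{H}$ requires a Steklov regularization, a pointwise chain-rule identity at the regularized level, and a careful double limit passage --- this is the technical heart of \cite[Theorem~2.2]{Alt-Luckhaus}. Once that identity is established, the remaining ingredients (monotonicity of the $p$-Laplacian via Lemma~\ref{Barrett-Liu}, the convexity-based bound for $\mathcal{H}$, and Gronwall) are essentially mechanical.
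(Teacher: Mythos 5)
Your treatment of the time term contains a genuine gap. You test with $\varphi=(v-u)_+\sigma_{t_1,t}$ and claim that $\partial_\tau\bigl(|v|^{q-1}v-|u|^{q-1}u\bigr)(v-u)_+$ coincides with $\partial_\tau\mathcal H(u,v)$ up to a remainder vanishing with the Steklov step. This identity is false. Already for smooth, nonnegative, ordered functions one has, on $\{v>u\}$,
\begin{equation*}
\mathcal H(u,v)=\int_u^v\bigl(s^{q}-u^{q}\bigr)\,ds,\qquad
\partial_\tau\mathcal H=(v^{q}-u^{q})\,\partial_\tau v-q\,u^{q-1}(v-u)\,\partial_\tau u,
\end{equation*}
whereas
\begin{equation*}
\partial_\tau\bigl(v^{q}-u^{q}\bigr)(v-u)=q\,v^{q-1}(v-u)\,\partial_\tau v-q\,u^{q-1}(v-u)\,\partial_\tau u,
\end{equation*}
so the two differ by $\bigl[q\,v^{q-1}(v-u)-(v^{q}-u^{q})\bigr]\partial_\tau v$. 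This is not a mollification error: it is a genuine cross term of indefinite sign, of order $(v-u)^2v^{q-2}\partial_\tau v$, it is not controlled by $\int\mathcal H$, and it involves $\partial_\tau v$, which is not even known to exist ((D1) only gives $\partial_\tau(|v|^{q-1}v)\in L^2(\Omega_T)$). This is precisely the classical mismatch for doubly nonlinear equations: testing with $(v-u)_+$ makes the $p$-Laplacian pairing monotone but ruins the parabolic term, while testing with $\bigl(|v|^{q-1}v-|u|^{q-1}u\bigr)_+$ does the opposite. Your $\mathcal H$ is the right object for the single-solution chain rule, as in \eqref{eq. of A^{+}}--\eqref{eq. of A^{-}}, but it does not linearize the difference of two solutions, so the Gronwall closure as set up does not follow. (Your convexity inequality $(|v|^{q-1}v-|u|^{q-1}u)(v-u)_+\le(q+1)\mathcal H$ is correct but moot; also, your bound $\mathcal H\le(v-u)_+\bigl(|v|^{q-1}v-|u|^{q-1}u\bigr)_+$ together with (D3') does not yield $\int_\Omega\mathcal H(t_1)\,dx\to0$ without a pointwise bound on $(v-u)_+$.)

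The paper avoids the cross term entirely by testing with $\sigma_{t_1,t}\,\phi_\delta(v-u)$, where $\phi_\delta(x)=\min\{1,x_+/\delta\}$. Then $\nabla\phi_\delta(v-u)=\tfrac1\delta(\nabla v-\nabla u)\chi_{\{0<v-u<\delta\}}$ keeps the elliptic difference nonnegative by \eqref{BL2pre}, and as $\delta\searrow0$ one has $\phi_\delta(v-u)\to\chi_{\{v>u\}}=\chi_{\{|v|^{q-1}v>|u|^{q-1}u\}}$, so the time term becomes exactly $\partial_\tau\bigl(|v|^{q-1}v-|u|^{q-1}u\bigr)_+$ (legitimate since both time derivatives lie in $L^2$), and Gronwall is run on $\int_\Omega\bigl(|v|^{q-1}v-|u|^{q-1}u\bigr)_+\,dx$, for which (D3') provides the vanishing initial datum. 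If you wish to keep your framework, replace $(v-u)_+$ by such a bounded nondecreasing approximation of the indicator and run Gronwall on the $L^1$-norm of $\bigl(|v|^{q-1}v-|u|^{q-1}u\bigr)_+$; the entropy $\mathcal H$ and the convexity step then become unnecessary.
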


\begin{proof}As before, for a small $\delta >0$, let us define the Lipschitz  function $\phi_{\delta}$ by %
\begin{equation*}
\phi_{\delta}(x):=\min\left\{1,\frac{x_{+}}{\delta} \right\}.
\end{equation*}%
\begin{center}
\begin{tikzpicture}[domain=-2:2, samples=70, very thick,scale=0.8]
\draw[thin, ->] (-2.2,0)--(4.5,0) node[right] {$x$}; % x-axis
\draw[thin, ->] (0,-0.7)--(0,1.8) node[above] {\footnotesize$y$}; % y-axis
\draw [domain=0:2]plot(\x, {0.6 *\x}); % line
\draw (1.5,1.2)node[above] {\footnotesize $y=\phi_\delta(x)$};
\draw (-0.2,0)node[below]{\footnotesize 0};
\filldraw[very thick](-2, 0)--(0,0);
\filldraw[very thick](2, 1.2)--(4,1.2);
\filldraw(2,0) circle (0.03);
\draw (2,0)node[below]{\footnotesize $\delta$};
\draw[thin, densely dotted] (2,0) -- (2,1.2);
\draw[thin, densely dotted] (0,1.2) -- (4,1.2);
\draw (-0.3,1.2)node {\footnotesize$1$};
\end{tikzpicture}
\end{center}
Note that $\phi_\delta (v-u) \in L^\infty (\Omega_T)$ and $L^\infty(0, T; W^{1,p}_0 (\Omega))$. Let $0 < t_1 < t \le T$
and $\sigma_{t_1,t}$ be the same time cut-off function as in the proof of Proposition \ref{Nonnegativity prop}. Choose a test function $\sigma_{t_{1}, t} \phi_\delta (v-u)$, which is admissible, to have
\begin{align}\label{eq1. of comparison theorem}
\int_{\Omega_{t_{1},t}}\partial_{t}(|u|^{q-1}u)\phi_{\delta}(v-u)\sigma_{t_{1}, t}\,dz&+\int_{\Omega_{t_{1},t}}|\nabla u|^{p-2}\nabla u \cdot \nabla(\phi_{\delta}(v-u))\sigma_{t_{1}, t}\,dz \notag \\
&\quad \quad \geq c\int_{\Omega_{t_{1},t}}|u|^{q-1}u\phi_{\delta}(v-u)\sigma_{t_{1}, t}\,dz
\end{align}
and
\begin{align}\label{eq2. of comparison theorem}
\int_{\Omega_{t_{1},t}}\partial_{t}(|v|^{q-1}v)\phi_{\delta}(v-u)\sigma_{t_{1}, t}\,dz&+\int_{\Omega_{t_{1},t}}|\nabla v|^{p-2}\nabla v \cdot \nabla (\phi_{\delta}(v-u)\sigma_{t_{1}, t})\,dz \notag \\
&\quad \quad \leq c\int_{\Omega_{t_{1},t}}|v|^{q-1}v\phi_{\delta}(v-u)\sigma_{t_{1}, t}\,dz.
\end{align}
Notice that
\begin{equation*}
\nabla \phi_{\delta}(v-u)=
\begin{cases}
\frac{\,1\,}{\delta}(\nabla v-\nabla u)\quad &0<v-u<\delta \\
0 \quad &\textrm{otherwise}
\end{cases}
\end{equation*}
and thus,
\begin{equation*}
\nabla \phi_{\delta}(v-u)=\frac{\,1\,}{\delta}(\nabla v-\nabla u)\chi_{\{0<v-u<\delta\}}.
\end{equation*}
Subtract (\ref{eq1. of comparison theorem}) from (\ref{eq2. of comparison theorem}) in Lemma \ref{Barrett-Liu} to obtain
\begin{align}
\label{eq3. of comparison theorem}
\lefteqn{
\int_{\Omega_{t_{1},t}}
\partial_{t}\big(|v|^{q-1}v-|u|^{q-1}u \big) \phi_{\delta}(v-u)\sigma_{t_{1}, t}\,dz
} \notag
\qquad &
\\ \notag
&
\leq
- \int_{\Omega_{t_{1},t}}\big(|\nabla v|^{p-2}\nabla v-|\nabla u|^{p-2}\nabla u \big)\cdot \frac{\,1\,}{\delta}(\nabla v-\nabla u)\chi_{\{0<v-u<\delta\}}\sigma_{t_{1}, t}\,dz
\\
& \quad
+ c\int_{\Omega_{t_{1},t}}\big(|v|^{q-1}v-|u|^{q-1}u\big)\phi_{\delta}(v-u)\sigma_{t_{1}, t}\,dz.
\end{align}
We find that the first term on the right hand side of (\ref{eq3. of comparison theorem}) is bounded above as
\begin{equation}\label{eq4. of comparison theorem}
- \frac{c'}{\delta}\int_{\Omega_{t_{1},t}}\chi_{\{0<v-u<\delta\}}|\nabla v-\nabla u|^{p}\sigma_{t_{1}, t}\,dz \leq 0
\end{equation}
for a positive constant $c'$. Thus (\ref{eq3. of comparison theorem}) and (\ref{eq4. of comparison theorem}) lead to
\begin{multline}\label{eq5. of comparison theorem}
\int_{\Omega_{t_{1},t}}\partial_{t}\big(|v|^{q-1}v-|u|^{q-1}u \big) \phi_{\delta}(v-u)\sigma_{t_{1}, t}\,dz
\\
\leq c \int_{\Omega_{t_{1},t}}\big(|v|^{q-1}v-|u|^{q-1}u\big)\phi_{\delta}(v-u)\sigma_{t_{1}, t}\,dz.
\end{multline}
Since $\partial_{t}(|u|^{q-1}u)$ and $\,\partial_{t}(|v|^{q-1}v) $ belong to  $L^{2}(\Omega_T)$, by the Lebesgue's dominated convergence theorem, we can take the limit as $\delta \searrow 0$ in (\ref{eq5. of comparison theorem}) and then, as $t_1 \searrow 0 $ to obtain
\begin{equation*}
\int_{\Omega}\big(|v|^{q-1}v(t)-|u|^{q-1}u(t) \big)_{+}\,dx \leq c\int_{0}^{t}\int_{\Omega} \big(|v|^{q-1}v(\tau)-|u|^{q-1}u(\tau) \big)_{+}\,dxd\tau,
\end{equation*}
where we used that $\phi_{\delta}(v-u) \to \chi_{\{v>u\}}$ as $\delta \searrow 0$ and that, from (\ref{BL2pre}), $u \geq v$ is equivalent to $|u|^{q-1} u \geq |v|^{q-1} v$ and, by the initial trace condition,
\begin{equation*}
\lim_{t_1 \rightarrow 0}
\int_\Omega \left(|v|^{q-1}v (t_1)-
|u|^{q-1} u (t_1)
\right)_+dx=0.
\end{equation*}
Thus Gronwall's lemma yields that
\begin{equation*}
\int_{\Omega}\big(|v|^{q-1}v(t)-|u|^{q-1}u(t) \big)_{+}\,dx\leq 0
\end{equation*}
and thus, $|v|^{q-1}v(t) \leq |u|^{q-1}u(t)$ in $\Omega, 0 \leq t \leq T$, which is equivalent to that
$v(t) \leq u(t)$ in $\Omega$, $0 \leq t \leq T$. Hence the proof is complete.
\end{proof}

%%%%%%%%%%%%%%Caccioppoli type%%%%%%%%%%%%%
\subsection{Caccioppoli type estimates}

We present the Caccioppoli type estimates, which have a crucial role in De Giorgi's method (see Section \ref{Expansion of positivity section}). From Proposition \ref{Nonnegativity prop} we find that if $u_0 \geq 0$ in $\Omega$, a weak solution $u$ of (\ref{maineq}) is nonnegative in $\Omega_T$. Thus we can consider%, for $T \in (0,\infty)$, %
(\ref{maineq}) as
\begin{equation}\label{maineq'}
\begin{cases}
\,\partial_tu^q-\Delta_pu=cu^q\quad &\textrm{in}\,\,\Omega_T,\\ \tag{3.1'}
\, 0 \leq u \leq M \quad &\textrm{on}\,\,\partial_{p}\Omega_T
\end{cases}
\end{equation}
In what follows, we always assume that $u_0 \geq 0$ in $\Omega$ and address (\ref{maineq'}) in place of (\ref{maineq}).

Let $K$ be a subset compactly contained in $\Omega$, and $0< t_1 < t_2 \leq T$. Here we use the notation $K_{t_1, t_2} = K \times (t_1, t_2)$. Let $\zeta$ be a smooth function such that $0 \leq \zeta \leq 1$ and $\zeta = 0$ outside $K_{t_1, t_2}$. By use of $A^+(k,u)$ and $A^{-}(k,u)$, the local energy inequality can be derived.
\begin{lem}\label{energy lemma}
Let $k\geq 0$. Then following holds true:
\begin{enumerate}[(i)]
\item Let u be a nonnegative weak supersolution to (\ref{maineq'}). Then there exists a positive constant $C$ depending only on $p,n$ such that
\begin{align}\label{local energy ineq}
\lefteqn{
\esssup_{t_{1}<t<t_{2}}\int_{K \times \{t\}}A^{-}(k,u)\zeta^{p}\,dx+\int_{K_{t_1,t_2}}|\nabla (k-u)_{+}\zeta |^{p}dz} \qquad &
\notag \\
&  \leq C\int_{K \times \{t_{1}\}}A^{-}(k,u)\zeta^{p}\,dx+C\int_{K_{t_1,t_2}}(k-u)_{+}^{p}|\nabla \zeta|^{p}\,dz \notag \\
&\qquad +C\int_{K_{t_1,t_2}}A^{-}(k,u)\zeta^{p-1}|\zeta_{t}|\,dz.
\end{align}
\item Let u be a nonnegative weak subsolution to (\ref{maineq'}). Then there exists a positive constant $C$ depending only on $p,n$ such that
\begin{align}\label{local energy ineq+}
\lefteqn{
\esssup_{t_{1}<t<t_{2}}\int_{K \times \{t\}}A^{+}(k,u)\zeta^{p}\,dx+\int_{K_{t_1,t_2}}|\nabla (u-k)_{+}\zeta |^{p}dz
} \qquad &
\notag \\
&\leq C\int_{K \times \{t_{1}\}}A^{+}(k,u)\zeta^{p}\,dx+C\int_{K_{t_1,t_2}}(u-k)_{+}^{p}|\nabla \zeta|^{p}\,dz \notag \\
&\qquad +C\int_{K_{t_1,t_2}}A^{+}(k,u)\zeta^{p-1}|\zeta_{t}|\,dz+C\int_{K_{t_1,t_2}}cu^q(u-k)_+\zeta^{p}\,dz.
\end{align}
\end{enumerate}
 \end{lem}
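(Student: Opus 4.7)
Both inequalities follow from the standard parabolic Caccioppoli argument applied to truncated test functions. For (i), I will test the supersolution inequality in (D2) of Definition~\ref{def} with the nonnegative function $\varphi = (k-u)_{+}\zeta^{p}$; for (ii), I will test the subsolution inequality with $\varphi = (u-k)_{+}\zeta^{p}$. In both cases $\varphi$ has compact support in $\Omega_T$ (since $\zeta$ vanishes outside $K_{t_1,t_2}$ and $K \Subset \Omega$), and so is admissible after approximation by $C^{\infty}_{0}(\Omega_T)$ functions. Because $\varphi$ is not smooth in time, the time-derivative term in the weak form must be handled through Steklov averaging: I will replace $u^{q}$ by its Steklov average $(u^{q})_{h}$, carry out the manipulations described below at the averaged level, and then send $h\to 0$, which is legitimate in view of $\partial_t(|u|^{q-1}u) \in L^{2}(\Omega_T)$ from (D1).

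For the gradient term in (i), on $\{u<k\}$ one has $\nabla u=-\nabla(k-u)_{+}$, so expanding $\nabla\varphi$ produces the good negative term $-\int|\nabla(k-u)_{+}|^{p}\zeta^{p}\,dz$ together with a cross term $-p\int (k-u)_{+}|\nabla(k-u)_{+}|^{p-2}\nabla(k-u)_{+}\cdot\nabla\zeta\,\zeta^{p-1}\,dz$; Young's inequality absorbs a small fraction of the leading term and produces the admissible right-hand-side contribution $C\int(k-u)_{+}^{p}|\nabla\zeta|^{p}\,dz$. For the time term, identity \eqref{eq. of A^{-}} gives $\partial_t u^{q}\cdot(k-u)_{+}=-\partial_t A^{-}(k,u)$, so integrating over $K\times(t_1,\tau)$ for an arbitrary $\tau\in(t_1,t_2)$ and moving the time derivative from $A^{-}$ onto $\zeta^{p}$ yields the boundary values at $\tau$ and $t_1$, together with a remainder bounded by $\int A^{-}(k,u)\,\zeta^{p-1}|\zeta_t|\,dz$. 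Since $c\geq 0$, $u\geq 0$ by Proposition~\ref{Nonnegativity prop}, and $\varphi\geq 0$, the reaction term $c\int u^{q}\varphi\,dz$ on the right of the supersolution inequality is nonnegative and is simply discarded. Taking $\esssup_{\tau\in(t_1,t_2)}$ on the $\tau$-boundary value converts it into the first term on the left of \eqref{local energy ineq}.

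Part (ii) follows the same template with symmetric sign adjustments: on $\{u>k\}$ the leading gradient term already has the good sign (no absorption needed to produce the $\int|\nabla(u-k)_{+}\zeta|^{p}\,dz$ term on the left), identity \eqref{eq. of A^{+}} gives $\partial_t u^{q}\cdot(u-k)_{+}=\partial_t A^{+}(k,u)$ which produces the corresponding $A^{+}$ boundary contributions, and the reaction term $c\int u^{q}(u-k)_{+}\zeta^{p}\,dz$ now sits on the right of the subsolution inequality and cannot be dropped, so it is retained and becomes the final term of \eqref{local energy ineq+}. The main obstacle in both cases is making rigorous the formal identity $\int \partial_t u^{q}\cdot(k-u)_{+}\zeta^{p}\,dz=-\int\partial_t A^{-}(k,u)\,\zeta^{p}\,dz$ (and its $A^{+}$ counterpart), because the only time regularity available is $\partial_t(u^q)\in L^{2}(\Omega_T)$ while $(k-u)_{+}$ is merely in $L^{\infty}$ in time; this is precisely where Steklov averaging is needed, after which the limit $h\to 0$ and the remaining algebra — Young's inequality on the gradient cross-term and redistribution of $\partial_t$ onto $\zeta^{p}$ — are routine.
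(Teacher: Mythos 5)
Your proposal is correct and follows essentially the same route as the paper: the paper also tests (D2) with the truncation times $\zeta^{p}$ (written there as $-(k-u)_{+}\zeta^{p}$ for the supersolution case), converts the time term via the identities \eqref{eq. of A^{-}} and \eqref{eq. of A^{+}} into $\partial_{t}A^{\mp}(k,u)$, integrates by parts onto $\zeta^{p}$, handles the gradient cross term with Young's inequality, and discards (resp.\ retains) the reaction term according to its sign, your Steklov-averaging remark being just a more careful justification of the same time-derivative step. One cosmetic caveat: in case (ii) the cross term $p\int(u-k)_{+}\zeta^{p-1}|\nabla u|^{p-2}\nabla u\cdot\nabla\zeta\,dz$ still requires Young's inequality with absorption of a small fraction of $\int|\nabla(u-k)_{+}|^{p}\zeta^{p}\,dz$, exactly as in case (i), so your parenthetical ``no absorption needed'' is slightly inaccurate but harmless.
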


\begin{proof} We give the proof only for the case (i), because the case (ii) is treated by a similar argument. We note by (D1) in Definition \ref{def} and the nonnegativity of $u$ in $\Omega_T$ that $\partial_t u^q \in L^2 (\Omega_T)$. Choose a test function $\varphi$ as $-(k-u)_+\zeta^p$ in (D2) in Definition \ref{def} to have
\begin{multline}\label{energy ineq1 lemma}
-\int_{K_{t_1,t}}\frac{\partial u^q}{\partial t}(k-u)_+\zeta^p\,dz -\int_{K_{t_1,t}}|\nabla u|^{p-2}\nabla u \cdot \nabla \big((k-u)_+\zeta^p \big)dz  \\
 =-c\int_{K_{t_1,t}}u^q(k-u)_+\zeta^p\,dz  \leq 0.
\end{multline}
Using the formula (\ref{eq. of A^{-}}), the first term on the left hand side of (\ref{energy ineq1 lemma}) is computed as
\begin{align}\label{energy ineq2 lemma}
-\int_{K_{t_1,t}}\frac{\partial u^q}{\partial t}(k-u)_+\zeta^p\,dz &= \int_{K_{t_1,t}}\frac{\partial}{\partial t}A^{-}(k,u)\zeta^p\,dz \notag \\
&=\int_KA^{-}(k,u)\zeta^p\,dx \bigg|_{t_1}^t-p\int_{K_{t_1,t}}A^{-}(k,u)\zeta^{p-1}|\zeta_t|\,dz.
\end{align}
By use of Young's inequality, the second term on the left hand side of (\ref{energy ineq1 lemma}) is estimated from below by
\begin{align}\label{energy ineq3 lemma}
%&-\int_{K_{t_1,t}}|\nabla u|^{p-2}\nabla u \cdot \nabla \big((k-u)_+\zeta^p \big)dz \notag \\
%&\quad =\int_{K_{t_1,t}}|\nabla(k-u)_+|^{p-2}\nabla(k-u)_+ \cdot \nabla\big((k-u)_+\zeta^p \big)dz \notag \\
 \frac{1}{2} \int_{K_{t_1,t}}|\nabla (k-u)_+|^p\zeta^p\,dz-C\int_{K_{t_1,t}}(k-u)_+^{p}|\nabla\zeta|^p\,dz.
\end{align}
We gather (\ref{energy ineq1 lemma}), (\ref{energy ineq2 lemma}) and (\ref{energy ineq3 lemma}) to obtain, for any $t \in (t_1,t_2)$,
\begin{align}
\lefteqn{
\int_{K\times \{t\}}A^-(k,u)\zeta^p\,dx+\int_{K_{t_1,t}}|\nabla(k-u)_+|^p\zeta^p\,dz
} \qquad &
\notag \\
& \leq C\int_{K\times \{t_1\}}A^-(k,u)\zeta^p\,dx+C\int_{K_{t_1,t}}A^-(k,u)\zeta^{p-1}|\zeta_t|\,dz \notag \\
&\qquad +C\int_{K_{t_1,t}}(k-u)_+|\nabla\zeta|^p\,dz.
\end{align}
Thus, we arrive at the conclusion.
\end{proof}

The following so-called Caccioppoli type estimate follows from Lemma \ref{energy lemma}.
\begin{prop}[Caccioppoli type estimate]
Let $k\geq 0$. Let $u$ be a nonnegative weak supersolution of (\ref{maineq}). Then, there exists a positive constant $C$ depending only on $p,n$ such that
\begin{align}\label{local energy ineq}
\lefteqn{
\esssup_{t_{1}<t<t_{2}}\int_{K \times \{t\}}(k-u)_{+}^{q+1}\zeta^{p}\,dx+\int_{K_{t_1,t_2}}|\nabla (k-u)_{+}\zeta |^{p}dz
} \qquad &
\notag \\
&\leq C\int_{K \times \{t_{1}\}}k^{q-1}(k-u)_{+}^{2}\zeta^{p}\,dx+C\int_{K_{t_1,t_2}}(k-u)_{+}^{p}|\nabla \zeta|^{p}\,dz \notag \\
&\qquad +C\int_{K_{t_1,t_2}}k^{q-1}(k-u)_{+}^{2}|\zeta_{t}|\,dz.
\end{align}
\end{prop}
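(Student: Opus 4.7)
The plan is to deduce this estimate as an almost immediate corollary of the local energy inequality in Lemma \ref{energy lemma}(i), provided one has the two-sided pointwise comparison
\[
c_1 (k-u)_+^{q+1} \;\leq\; A^-(k,u) \;\leq\; c_2\, k^{q-1}(k-u)_+^2
\]
with constants $c_1, c_2$ depending only on $q$ (equivalently on $p,n$). Granting this, I would substitute the lower bound into the $\esssup$ term on the left-hand side of the inequality in Lemma \ref{energy lemma}(i), substitute the upper bound into the two integrals involving $A^-$ on the right-hand side, use $\zeta^{p-1} \leq 1$ to absorb the extra cutoff factor in the $|\zeta_t|$ term, and leave the $|\nabla(k-u)_+\zeta|^p$ and $(k-u)_+^p|\nabla \zeta|^p$ terms untouched. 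Collecting constants then yields the statement.

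The two pointwise bounds for $A^-$ I would prove directly from the representation
\[
A^-(k,u) \;=\; q\int_0^{(k-u)_+}(k-\eta)^{q-1}\eta\,d\eta
\]
derived earlier in the excerpt. For the upper bound, since $q = p^\ast - 1 \geq 1$ (because $p \geq 2$) and $k - \eta \leq k$ on $[0,(k-u)_+]$, one has $(k-\eta)^{q-1} \leq k^{q-1}$, and integration in $\eta$ produces the factor $(k-u)_+^2/2$. For the lower bound, I would restrict the integration to the half-interval $[0,(k-u)_+/2]$; there $k - \eta \geq (k+u)/2 \geq (k-u)_+/2$ (using $u \geq 0$ and $k \geq 0$), hence $(k-\eta)^{q-1} \geq ((k-u)_+/2)^{q-1}$, and a direct computation gives $A^-(k,u) \geq q\, 2^{-q-2} (k-u)_+^{q+1}$.

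No serious obstacle is anticipated: the argument is purely algebraic and relies only on $q \geq 1$ (itself a consequence of $p \geq 2$) together with the nonnegativity $u \geq 0$, which is guaranteed by Proposition \ref{Nonnegativity prop} and is in force for the subsolutions to which the Caccioppoli inequality will be applied. The mild asymmetry between the sharp power $(k-u)_+^{q+1}$ on the left and the factor $k^{q-1}(k-u)_+^2$ on the right reflects the behavior of $\partial_t u^q$ near $u=0$ and is precisely the form needed for the De Giorgi iteration carried out in Section \ref{Expansion of positivity section}.
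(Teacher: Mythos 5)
Your proposal is correct and follows essentially the same route as the paper: both deduce the estimate from Lemma \ref{energy lemma}(i) via the two-sided pointwise bounds $c_1(k-u)_+^{q+1}\leq A^-(k,u)\leq c_2\,k^{q-1}(k-u)_+^2$, with the upper bound obtained identically. The only (inessential) difference is in the lower bound, where you restrict the integral representation to the half-interval $[0,(k-u)_+/2]$ while the paper splits into the cases $u\geq k/2$ and $u\leq k/2$; both give a constant depending only on $q$, i.e.\ on $p,n$.
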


\begin{proof}
We first estimate $\displaystyle A^-(k,u)=q\int_u^k(k-\eta)_+\eta^{q-1}\,d\eta$ defined in (\ref{def of A}).
The lower boundedness is obtained as follows:\\
Case 1 ($u \geq k/2$): \quad Since $\frac{k}{2} \leq u \leq \eta \leq k$ i.e., $\eta \geq k-\eta \geq 0$, it holds that
\begin{equation}\label{Caccioppoli estimate0}
A^-(k,u)\geq q \int_{u}^{k}(k-\eta)^{q}\,d\eta=\frac{q}{q+1}(k-u)^{q+1}.
\end{equation}
\noindent\\
Case 2 ($u \leq k/2$): \quad Since $\frac{k}{2} \leq \eta \leq k$, i.e., $0\leq k-\eta \leq \eta$, it holds that
\begin{align}\label{Caccioppoli estimate1}
A^-(k,u)&=q\int_u^{k/2}(k-\eta)_+\eta^{q-1}\,d\eta+q\int_{k/2}^{k}(k-\eta)_+\eta^{q-1}\,d\eta \notag \\
& \geq q\int_{k/2}^{k}(k-\eta)^{q}\,d\eta  \notag \\
& =\frac{q}{q+1}\left(\frac{k}{2}\right)^{q+1} \notag \\
&\geq \frac{q}{q+1}\frac{1}{2^{q+1}}(k-u)^{q+1},
\end{align}
where, in the last line, we use $k >k-u \geq 0$ since $0 \leq u \leq k/2$.
Also, the upper boundedness follows from
\begin{align}\label{Caccioppoli estimate2}
A^-(k,u)&=q \int_0^{(k-u)_+}(k-\eta)^{q-1}\eta\,d\eta \leq qk^{q-1}\int_0^{(k-u)_+}\eta\,d\eta=qk^{q-1}\frac{(k-u)_+^2}{2}.
\end{align}
Gathering Lemma \ref{energy lemma}, (\ref{Caccioppoli estimate0}), (\ref{Caccioppoli estimate1}) and (\ref{Caccioppoli estimate2}), we arrive at the conclusion.
\end{proof}

%%%%%Chapter 4%%%%%%%Expansion of positivity%%%%%
%%%%%%%%%%%%%%%%%%%%%%%%%%%%%%%%%%%

%%%%%%%%%%%%%%%%%%%%%%%%%%%%%%%%%%%%%%
\section{Expansion of positivity}\label{Expansion of positivity section}
%%%%%%%%%%%%%%%%%%%%%%%%%%%%%%%%%%%%%%

In this section, we will establish the expansion of positivity of a nonnegative solution to the doubly nonlinear equations of $p$-Sobolev flow type (\ref{maineq'}).

\smallskip

We make local estimates to show the expansion on space-time of positivity of a nonnegative weak (super)solution of (\ref{maineq'}). For any positive numbers $\rho$, $\tau$ and any point $z_0=(x_{0},t_{0}) \in \Omega_{T}$, we denote a local parabolic cylinder of radius $\rho$ and height $\tau$ with vertex at $z_0$ by
\begin{equation*}
Q(\tau,\rho)(z_0):=B_{\rho}(x_{0})\times (t_{0}-\tau,t_{0}).
\end{equation*}
For brevity, we write $Q(\tau,\rho)$ as  $Q(\tau,\rho)(0)$.
%We take suitably small $\rho>0$ such that %
%\begin{equation*}
%Q(\theta(4\rho)^{p},4\rho)(z_0)=B_{4\rho}(x_{0})\times (t_{0}-\theta (4\rho)^{p},t_{0}) \subset \Omega_{T}.
%\end{equation*}
%
%%%%%%%%%%%%%%%%%%%%%%%%%%%%%%%%%%%%%%%%%%%%%%%%%%%%%%%%%%%%%%%%%%%%%%%%%%%%%%%%%%%%%%%%%%%%%%%%%%%%%%%%%%%%%%%%%%%%%%%%%%%%%%%%%%%%%%%
%%%%%%%%%%%%%%%%%%%%%%%%%%%%%%%%
\subsection{Expansion of interior positivity I}
%%%%%%%%%%%%%%%%%%%%%%%%%%%%%%%
%%%%%%%%%%%%%%%%%%%%%%%%%%%%%%%%%%%%%%%%%%%%%%
In this subsection we will study expansion of local positivity of a weak solution of (\ref{maineq'}). Following the argument in \cite{DiBenedetto2} (see also \cite{DiBenedetto1},~\cite{KuusiPisa},~\cite{Urbano}), we proceed our local estimates.
\begin{prop}\label{ex. prop}
Let $u$ be a nonnegative weak supersolution of (\ref{maineq'}). Let $B_\rho (x_0) \subset \Omega$
with center $x_0 \in \Omega$ and radius $\rho > 0$, and $t_0 \in (0, T]$. Suppose that
\begin{equation}\label{ex. propassumption}
\big|B_\rho(x_0) \cap \{u(t_0) \geq L\} \big| \geq \alpha |B_\rho|
\end{equation}
holds for some $L>0$ and $\alpha \in (0,1]$. Then there exists positive numbers $\delta,\,\varepsilon \in (0,1)$ depending only on $p,n$ and $\alpha$ and independent of $L$ such that
\begin{equation}\label{ex. propconc}
\big|B_\rho(x_0) \cap \{u(t) \geq \varepsilon L\}\big| \geq \frac{\alpha}{2} |B_\rho|
\end{equation}
for all $t \in [t_0,\,t_0+\delta L^{q+1-p}\rho^p ]$.
\end{prop}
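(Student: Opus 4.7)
The strategy is a standard De~Giorgi-type measure propagation on the intrinsic cylinder $Q := B_\rho(x_0)\times(t_0,\,t_0+\tau)$ with height $\tau := \delta L^{q+1-p}\rho^p$. Fix a time-independent spatial cutoff $\zeta\in C^\infty_c(B_\rho(x_0))$ with $\zeta\equiv 1$ on $B_{(1-\sigma)\rho}(x_0)$, $0\le\zeta\le 1$, and $|\nabla\zeta|\le c/(\sigma\rho)$, for parameters $\sigma,\varepsilon,\delta\in(0,1)$ to be chosen. I would apply the local energy inequality of Lemma~\ref{energy lemma}\,(i) at the truncation level $k=L$. Because $\zeta_t\equiv 0$, the estimate reduces to
\begin{equation*}
\sup_{t_0<t<t_0+\tau}\int_{B_\rho\times\{t\}} A^-(L,u)\,\zeta^p\,dx \;\le\; \int_{B_\rho\times\{t_0\}} A^-(L,u)\,\zeta^p\,dx \;+\; C\!\int_{Q} (L-u)_+^{p}|\nabla\zeta|^p\,dz,
\end{equation*}
with coefficient exactly $1$ in front of the initial trace. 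Retaining this sharp $1$ is critical: the ``downstream'' Caccioppoli bound phrased in terms of $(L-u)_+^{q+1}$ carries an extra factor of order $(q+1)2^q$ which cannot be absorbed when $\alpha$ is small.

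\textbf{Term-by-term bounds.} A direct computation of $A^-(L,u)=q\int_u^L(L-\eta)\eta^{q-1}\,d\eta$ yields two sharp pointwise inequalities:
\begin{equation*}
A^-(L,u)\le \frac{L^{q+1}}{q+1}\quad\text{for every }u\ge 0, \qquad A^-(L,u)\ge \frac{L^{q+1}}{q+1}\bigl[1-(q+1)\varepsilon^q\bigr]\quad\text{on }\{u<\varepsilon L\}.
\end{equation*}
The first inequality combined with the hypothesis $|B_\rho\cap\{u(t_0)<L\}|\le(1-\alpha)|B_\rho|$ bounds the initial trace by $(1-\alpha)L^{q+1}|B_\rho|/(q+1)$. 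Using $(L-u)_+\le L$, $|\nabla\zeta|^p\le C(\sigma\rho)^{-p}$, and the intrinsic choice of $\tau$ (which is precisely tailored so that $L^p\,\tau\,\rho^{-p}=\delta L^{q+1}$), the gradient term is at most $C\delta L^{q+1}\sigma^{-p}|B_\rho|$. Inserting the second $A^-$-bound on the left and restricting the integral to $B_{(1-\sigma)\rho}\subset B_\rho$ (where $\zeta\equiv 1$) converts the sup-estimate into
\begin{equation*}
\bigl|B_{(1-\sigma)\rho}\cap\{u(t)<\varepsilon L\}\bigr| \;\le\; \frac{(1-\alpha)+C(q+1)\delta\sigma^{-p}}{1-(q+1)\varepsilon^q}\,|B_\rho|
\end{equation*}
for every $t\in[t_0,t_0+\tau]$.

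\textbf{Parameter choice and main obstacle.} Adding the annular defect $|B_\rho\setminus B_{(1-\sigma)\rho}|\le n\sigma|B_\rho|$ produces a bound on $|B_\rho\cap\{u(t)<\varepsilon L\}|$ by the bracket above plus $n\sigma|B_\rho|$. I would then pick the parameters in the order: first $\sigma$ comparable to $\alpha$; then $\varepsilon=\varepsilon(\alpha,p,n)$ so small that $(q+1)\varepsilon^q$ is a fixed small multiple of $\alpha$; and finally $\delta=\delta(\alpha,p,n)$ so small that $C(q+1)\delta\sigma^{-p}$ is likewise a small multiple of $\alpha$. An elementary algebraic check then reduces the bracket to $1-\alpha/2$, yielding precisely \eqref{ex. propconc}. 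The main delicate point is this sharp tracking of constants: one must work directly with $A^-$ and exploit its precise global supremum $L^{q+1}/(q+1)$, because otherwise the coefficient of $(1-\alpha)$ picks up a large dimensional factor that, even as $\varepsilon,\delta,\sigma\to 0$, leaves the bracket above strictly bigger than $1-\alpha/2$.
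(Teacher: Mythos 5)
Your proposal is correct and follows essentially the same route as the paper: a spatial cutoff on $B_\rho$ with $\zeta\equiv1$ on $B_{(1-\sigma)\rho}$, the energy (Caccioppoli) estimate at level $L$ over the intrinsic cylinder of height $\delta L^{q+1-p}\rho^p$, the annulus defect $|B_\rho\setminus B_{(1-\sigma)\rho}|\le n\sigma|B_\rho|$, and the parameter choices $\sigma\sim\alpha/n$, then $\varepsilon$, then $\delta$, all independent of $L$ by the intrinsic scaling. The only real difference is bookkeeping: the paper converts $A^-$ into powers, bounding the initial trace by $L^{q-1}(L-u(0))_+^2\le L^{q+1}$ and the left side from below by $L^{q+1}(1-\varepsilon)^{q+1}$ on $\{u(t)<\varepsilon L\}$, whereas you keep the sharp bounds $A^-(L,\cdot)\le L^{q+1}/(q+1)$ and $A^-(L,u)\ge \frac{L^{q+1}}{q+1}\bigl[1-(q+1)\varepsilon^q\bigr]$ on $\{u<\varepsilon L\}$. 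Your remark about the coefficient $1$ in front of the initial trace is well taken: it is exactly what the paper's displayed inequality uses (and what the proof of Lemma \ref{energy lemma}, i.e.\ the fundamental theorem of calculus applied to $\partial_t A^-$, actually yields), and it is needed to reach the fraction $\alpha/2$; quoting the Caccioppoli proposition with a generic constant $C>1$ in front of the initial trace would not suffice for small $\alpha$, so your explicit tracking of this constant is a welcome clarification rather than a deviation.
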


\begin{proof}
By a parallel translation invariance of the equation (\ref{maineq'}) we may assume $(x_{0},t_{0})=(0,0)$. For $k>0$ and $t>0$, let
\begin{equation*}
A_{k,\rho}(t):=B_\rho \cap \{u(t)<k\}.
\end{equation*}
 Take a cutoff function $\zeta=\zeta(x)$ satisfying
 \begin{equation*}
\xi \equiv 1\quad \textrm{on}\,\,B_{(1-\sigma)\rho},\quad |\nabla \zeta | \leq \frac{1}{\sigma \rho},
\end{equation*}
where $\sigma $ is to be determined later. Applying the Caccioppoli type inequality (\ref{local energy ineq}) over $Q^+(\theta \rho^p,\rho):=B_\rho \times (0,\theta\rho^p)$ to the truncated function $(L-u)_{+}$ and above $\zeta$, we obtain, for any $t \in (0, \theta \rho^p)$,
\begin{align}
&\int_{B_{(1-\sigma)\rho}}(L-u(t))_+^{q+1}\,dx+C\int_{Q^+(\theta \rho^p,\rho)}|\nabla (L-u)_+ \zeta|^p\,dz \notag \\
& \qquad \leq \int_{B_\rho} L^{q-1}(L-u(0))_+^2\,dx+C\int_{Q^+(\theta \rho^p,\rho)}(L-u)_+^p \left(\frac{1}{\sigma \rho} \right)^p\,dz \notag \\
&\qquad \leq L^{q+1}(1-\alpha)|B_\rho|+C\frac{\theta}{\sigma^p}L^p|B_\rho|,
\end{align}
where we use the assumption (\ref{ex. propassumption}) for $u(0)$, and thus, for any $t \in (0, \theta \rho^p)$,
\begin{equation}\label{ex. propineq1}
\int_{B_{(1-\sigma)\rho}}(L-u(t))_+^{q+1}\,dx \leq L^{q+1}\left\{(1-\alpha)+C\frac{\theta L^{p-(q+1)}}{\sigma^p} \right\}|B_\rho|.
\end{equation}
We will estimate the left hand side of (\ref{ex. propineq1}). Firstly, we obtain that
\begin{align}\label{ex. propineq2}
\int_{B_{(1-\sigma)\rho}}(L-u(t))_+^{q+1}\,dx &\geq \int_{B_{(1-\sigma)\rho} \,\cap \{u(t)<\varepsilon L\}}(L-u)_+^{q+1}\,dx \notag \\
& \geq L^{q+1}(1-\varepsilon)^{q+1}\big|A_{\varepsilon L,(1-\sigma)\rho}(t)\big|.
\end{align}
Since $A_{\varepsilon L,\rho}(t) \,\backslash \,A_{\varepsilon L,(1-\sigma)\rho}(t) \subset  B_\rho \,\backslash \,B_{(1-\sigma)\rho}$ and
\begin{align}
|B_\rho \,\backslash \,B_{(1-\sigma)\rho}|&=|B_1(0)|\rho^n\{1-(1-\sigma)^n\} \notag \\
&\leq |B_1(0)|\rho^n \{1-(1-n\sigma)\} \notag \\
& \leq n\sigma |B_\rho|,\notag
\end{align}
we have
\begin{align}\label{ex. propineq3}
|A_{\varepsilon L,\rho}(t)| &\leq |A_{\varepsilon L,(1-\sigma)\rho}(t)| +|A_{\varepsilon L,\rho}(t) \,\backslash \,A_{\varepsilon L,(1-\sigma)\rho}(t)| \notag \\
& \leq |A_{\varepsilon L,(1-\sigma)\rho}(t)|+n\sigma |B_\rho|.
\end{align}
From (\ref{ex. propineq2}) and (\ref{ex. propineq3}), we have
\begin{equation}\label{ex. propineq4}
\int_{B_{(1-\sigma)\rho}}(L-u(t))_+^{q+1}\,dx \geq L^{q+1}(1-\varepsilon)^{q+1} \left(|A_{\varepsilon L,\rho}(t)|-n\sigma |B_\rho| \right).
\end{equation}
and thus, (\ref{ex. propineq1}) and (\ref{ex. propineq4}) yield that, for any $t \in (0, \theta \rho^p)$,
\begin{equation}\label{ex. propineq5}
|A_{\varepsilon L,\rho}(t)| \leq \frac{1}{(1-\varepsilon)^{q+1}}\left\{(1-\alpha)+C\frac{\theta L^{p-(q+1)}}{\sigma^p}+n\sigma \right\}|B_\rho|.
\end{equation}

Here we choose the parameters as
\begin{equation}\label{parameter}
\theta=\delta L^{q+1-p},\quad \delta=\frac{\alpha^{p+1}}{2^{3p+3}Cn^p}, \quad \sigma=\frac{\alpha}{8n}
\end{equation}
and $\varepsilon$ with
\begin{equation}\label{parameter2}
\varepsilon \leq 1-\Bigg\{\frac{1-\frac{3}{4}\alpha}{1-\frac{1}{2}\alpha} \Bigg\}^{\frac{1}{q+1}}.
\end{equation}
Then we find from (\ref{ex. propineq5})-(\ref{parameter2}) that, for any $t \in [0, \theta \rho^p]$,
\begin{align}
|A_{\varepsilon L,\rho}(t)| \leq \frac{1}{\,\frac{1-\frac{3}{4}\alpha}{1-\frac{1}{2}\alpha}\,}\cdot \left(1-\frac{3}{4}\alpha \right)|B_\rho|=\left(1-\frac{1}{2}\alpha \right)|B_\rho|,
\end{align}
that is, (\ref{ex. propconc}) is actually verified under (\ref{parameter}), and thus the proof is complete.
\end{proof}

\begin{lem}\label{ex. crucial lemma}
Let $u$ be a nonnegative weak supersolution of (\ref{maineq'}).
Let $Q_{4\rho}(z_{0}):=B_{4\rho}(x_{0}) \times (t_{0}, t_{0}+\delta L^{q+1-p}\rho^p) \subset \Omega_{T}$, where $\delta$ is selected in Proposition \ref{ex. prop}. Then for any $\nu \in (0,1)$ there exists a positive number $\varepsilon_\nu$ depending only on $p,n,\alpha,\delta,\nu$ such that
\begin{equation*}
\big|Q_{4\rho}(z_{0}) \cap \{u<\varepsilon_\nu L\} \big| <\nu \big|Q_{4\rho}\big|.
\end{equation*}
\end{lem}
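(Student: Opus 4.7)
The plan is to set levels $k_j := \varepsilon L/2^j$ for $j=0,1,2,\ldots$, where $\varepsilon \in (0,1)$ is the constant furnished by Proposition~\ref{ex. prop}, and run a De~Giorgi iteration on $Y_j := |Q_{4\rho}(z_0) \cap \{u < k_j\}|$. The goal is to derive a telescoping recursion of the form $y_{j+1}^{p/(p-1)} \le C^\ast(y_j - y_{j+1})$ for $y_j := Y_j/|Q_{4\rho}|$, since summing it immediately yields $J\,y_J^{p/(p-1)} \le C^\ast$ by monotonicity, whence $y_J \to 0$ as $J \to \infty$; the lemma then follows by picking $J = J(\nu,p,n,\alpha,\delta)$ large and setting $\varepsilon_\nu := \varepsilon/2^J$.

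The two inputs are a Caccioppoli energy bound and De~Giorgi's isoperimetric inequality, glued by H\"older. First I would apply the Caccioppoli estimate at the end of Section~\ref{Sec. Doubly nonlinear equations of p-Sobolev flow type} to the truncation $(k_j - u)_+$ on $Q_{4\rho}(z_0)$ with a \emph{space-only} cutoff $\zeta(x)$ with $\zeta \equiv 1$ on $B_{4\rho}(x_0)$ and $|\nabla \zeta| \le C/\rho$. The crude bound $(k_j-u)_+ \le k_j$ controls the initial contribution by $k_j^{q+1}|B_{4\rho}|$, and the gradient contribution by $C k_j^p/\rho^p \cdot |Q_{4\rho}| = C\delta\,k_j^p L^{q+1-p}|B_{4\rho}|$, using $|Q_{4\rho}| = |B_{4\rho}|\,\delta L^{q+1-p}\rho^p$. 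Substituting $k_j = \varepsilon L/2^j$ and using $2^{-(q+1)j} \le 2^{-pj}$ (since $p < q+1 = p^\ast$), both terms are absorbed into
\[
\iint_{Q_{4\rho}} |\nabla(k_j - u)_+|^p\,dz \;\le\; \frac{C(p,n,\alpha,\delta)\, L^{q+1}|B_{4\rho}|}{2^{pj}}.
\]

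Second, Proposition~\ref{ex. prop} ensures, for every $t \in (t_0, t_0 + \delta L^{q+1-p}\rho^p]$ and every $j\ge 0$ (since $k_j \le \varepsilon L$), the mass bound $|B_{4\rho}(x_0) \cap \{u(t) \ge k_j\}| \ge (\alpha/(2\cdot 4^n))|B_{4\rho}|$. Applying Proposition~\ref{De Giorgi's inequality} to $v = -u(\cdot,t)$ on $B_{4\rho}$ with levels $-k_j < -k_{j+1}$, integrating in $t$ and applying H\"older in $(x,t)$, together with $|Q_{4\rho} \cap \{k_{j+1}<u<k_j\}| = Y_j - Y_{j+1}$, one obtains
\[
\frac{\varepsilon L}{2^{j+1}}\,Y_{j+1} \;\le\; C(n,\alpha)\,\rho \left(\iint_{Q_{4\rho}} |\nabla(k_j - u)_+|^p\,dz\right)^{1/p} (Y_j - Y_{j+1})^{(p-1)/p}.
\]
Inserting the Caccioppoli bound and using the scaling identity $\rho\,|B_{4\rho}|^{1/p}\,L^{(q+1-p)/p} \simeq \delta^{-1/p}|Q_{4\rho}|^{1/p}$, the $2^j$ factors cancel and one arrives at $Y_{j+1} \le C^\ast|Q_{4\rho}|^{1/p}(Y_j - Y_{j+1})^{(p-1)/p}$; raising to the power $p/(p-1)$ and normalizing gives the required recursion.

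The main obstacle I expect is the careful bookkeeping that makes this cancellation of $j$-dependence work: the exponent $p$ in the Caccioppoli gradient term has to match exactly the exponent in the level-spacing $k_j - k_{j+1}$, and this match is only possible thanks to the intrinsic time-length $\delta L^{q+1-p}\rho^p$ of $Q_{4\rho}$---precisely the scaling selected by the whole expansion-of-positivity machinery. A minor technical point is that the space-only cutoff produces an initial-time term, which is absorbed via $(k_j/L)^{q+1-p} \le 1$; a time cutoff vanishing at $t_0$ would remove it but complicate the treatment of times close to $t_0$.
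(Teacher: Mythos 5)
Your proposal is correct and follows essentially the same route as the paper: the Caccioppoli estimate for $(k_j-u)_+$ with a space-only cutoff (initial term absorbed via $k_j^{q+1}\le k_j^pL^{q+1-p}$), the time-uniform measure bound from Proposition \ref{ex. prop} at level $\varepsilon L$ feeding the denominator of De Giorgi's inequality, H\"older in space-time, the telescoping sum $J\,y_J^{p/(p-1)}\le C^\ast$, and the choice $\varepsilon_\nu=\varepsilon/2^J$. The only differences are cosmetic (you normalize by $|Q_{4\rho}|$ and phrase the De Giorgi step for $v=-u$, which is in fact the form the paper implicitly uses in passing from its (4.11) to (4.13)).
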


\begin{proof}
We may assume $z_0=0$ as before. By Proposition \ref{ex. prop}, there exist positive numbers $\delta,\,\varepsilon \in (0,1)$ such that
\begin{equation}\label{lemma 4.2 ineq1}
\big|B_{4\rho} \cap \{u(t) \geq \varepsilon L\} \big| \geq \frac{\alpha}{2} \cdot 4^{-n}|B_{4\rho}|
\end{equation}
for all $t \in [0,\,\delta L^{q+1-p}\rho^p]$.

Set  $\theta=\delta L^{q+1-p}$ and let $\zeta=\zeta(x)$ be a piecewise smooth cutoff function satisfying
\begin{align}\label{cutoff lemma 4.2}
&0 \leq \zeta \leq 1,\quad \zeta \equiv 0\quad \textrm{oustside}\quad B_{8\rho}\notag \\
&\zeta \equiv 1\quad \mathrm{on}\quad Q_{4\rho},\quad |\nabla \zeta| \leq (4\rho)^{-1}.
\end{align}
From (\ref{cutoff lemma 4.2}) and the Caccioppoli type inequality (\ref{local energy ineq}), applied for the truncated solution $(k_j - u)_+$ over $Q_{4 \rho}$ with the level $k_j=\frac{1}{2^j}\varepsilon L\,\,(j=0,1,\ldots)$, we obtain
\begin{align}\label{ex. positivity ineq1}
\int_{Q_{4\rho}}|\nabla (k_j-u)_+|^p\zeta^p\,dz &\leq \int_{B_{8 \rho} \times \{t = 0\}}k_j^{q-1} (k_j-u)_+^2 \zeta^p\,dx +C\int_{Q_{8\rho}}(k_j-u)_+^p|\nabla \zeta|^p \,dz\notag \\
 & \leq C \left(k_j^{q+1}|B_{8 \rho}|+k_j^p |Q_{8 \rho}| (4\rho)^{-p}\right)  \notag \\
&\leq C k_j^p L^{q + 1 - p} |B_{8 \rho}|\left(1+2^{-p} \delta\right)\notag \\
&\leq C\frac{k_j^p}{\delta \rho^{p}}\big|Q_{8\rho}\big|
=C\frac{k_j^p}{\delta \rho^{p}}\big|Q_{4\rho} \big|.
\end{align}
Here we note that the constant $C$ depends only on $n$, $p$ and,  in particular, is independent of $\rho, L$.
On the other hand, applying De Giorgi's inequality (\ref{De Giorgi's ineq.}) in Proposition \ref{De Giorgi's inequality} to $k=k_{j+1}$ and $l=k_j$, we have, for all $t$, $0\leq t \leq \delta L^{q+1-p}\rho^{p}=\theta \rho^{p}$,
\begin{multline}\label{ex. positivity ineq2}
(k_{j}-k_{j+1})\big|B_{4\rho} \cap \{u(t)>k_j\}\big|
\\
\leq C\frac{\rho^{n+1}}{\big|B_{4\rho} \cap \{u(t) <k_{j+1}\} \big|}\int_{B_{4\rho} \cap \{k_{j+1}<u(t) <k_j\}}|\nabla u(t)|\,dx.
\end{multline}
Let $A_{j}(t):=B_{4\rho} \cap \{u(t)<k_{j}\}$ and then, by (\ref{lemma 4.2 ineq1}), it holds that
\begin{equation}\label{ineq A_{j}}
\big|B_{4\rho} \setminus A_{j}(t)\big| \geq \frac{\alpha}{2}4^{-n}|B_{4\rho}|.
\end{equation}
Combine (\ref{ineq A_{j}}) with (\ref{ex. positivity ineq2}) to have that
\begin{align}\label{ex. positivity ineq3}
\frac{k_{j}}{2}\big|A_{j+1}(t)\big| &\leq \frac{\rho^{n+1}}{|B_{4\rho} \setminus A_{j}(t)|}\int_{B_{4\rho} \cap \{k_{j+1}<u(t) <k_j\}}|\nabla u(t)|\,dx \notag \\
&\leq \frac{C}{\alpha}\rho \int_{B_{4\rho} \cap \{k_{j+1}<u(t) <k_j\}}|\nabla u(t)|\,dx.
\end{align}
Integrating above inequality (\ref{ex. positivity ineq3}) in $t \in (0,\theta\rho^{p})$ yields
\begin{equation}\label{ex. positivity ineq4}
\frac{k_{j}}{2}|A_{j+1}| \leq \frac{C}{\alpha}\rho \int_{Q_{4\rho} \cap \{k_{j+1}<u<k_j\}}|\nabla u|\,dz,
\end{equation}
where we put $\displaystyle |A_{j}|:=\int_{0}^{\theta \rho^{p}}|A_{j}(t)|\,dt=\big|Q_{4\rho}\cap \{u(t) <k_{j}\}\big|$. By use of H\"older's inequality,  (\ref{ex. positivity ineq1}) and (\ref{ex. positivity ineq4}), we have
\begin{align}\label{ex. positivity ineq5}
\frac{k_{j}}{2}|A_{j+1}| &\leq \frac{C}{\alpha}\rho \left[\int_{Q_{4\rho}}|\nabla (k_j-u)_+|^p\,dz\right]^\frac{1}{p}\big|A_j \,\backslash\,A_{j+1} \big|^\frac{p-1}{p} \notag \\
& \leq \frac{C}{\alpha}\rho \cdot \frac{k_{j}}{\delta^{\frac{1}{p}}\rho} |Q_{4\rho}|^\frac{1}{p} \left(|A_j|-|A_{j+1}| \right)^\frac{p-1}{p} \notag \\
&=\frac{C}{\alpha \delta^\frac{1}{p}}k_j \big|Q_{4\rho}\big|^\frac{1}{p}\left(|A_j|-|A_{j+1}| \right)^\frac{p-1}{p}
\end{align}
and thus,
\begin{equation}\label{ex. positivity ineq6}
|A_{j+1}|^\frac{p}{p-1} \leq \left(\frac{C}{\alpha \delta^\frac{1}{p}}\right)^\frac{p}{p-1}\big|Q_{4\rho}\big|^\frac{1}{p-1}\left(|A_j|-|A_{j+1}| \right).
\end{equation}
Let $J \in \mathbb{N}$ be determined later. Summing (\ref{ex. positivity ineq6}) over $j=0,1,\ldots, J-1$, we obtain
\begin{equation}\label{ex. positivity ineq7}
J|A_J|^\frac{p}{p-1} \leq \left(\frac{C}{\alpha \delta^\frac{1}{p}}\right)^\frac{p}{p-1}\big|Q_{4\rho}\big|^\frac{p}{p-1}.
\end{equation}
Indeed, by use of $|A_{0}| \geq |A_{j}| \geq |A_{J}|$ for $j \in \{0,1,\ldots, J\}$, we find that
\begin{equation*}
\sum \limits_{j=0}^{J-1}|A_{j+1}|^\frac{p}{p-1} \geq J|A_{J}|^\frac{p}{p-1}\quad \textrm{and} \quad \sum \limits_{j=0}^{J-1}\left(|A_j|-|A_{j+1}| \right) \leq |A_0|\leq \big|Q_{4\rho}\big|.
\end{equation*}%
Therefore, from (\ref{ex. positivity ineq7}), it follows that
\begin{equation}\label{ex. positivity ineq8}
|A_J| \leq \frac{1}{J^\frac{p-1}{p}}\left(\frac{C}{\alpha \delta^\frac{1}{p}}\right)\big|Q_{4\rho}\big|.
\end{equation}
Thus, for any $\nu \in (0,1)$, we choose sufficiently large $J \in \mathbb{N}$ satisfying
\begin{equation}\label{def of J}
\frac{1}{J^\frac{p-1}{p}}\left(\frac{C}{\alpha \delta^\frac{1}{p}}\right) \leq \nu \iff J \geq \left(\frac{C}{\nu\alpha \delta^\frac{1}{p}}\right)^\frac{p}{p-1}.
\end{equation}
Here we note that $J$ depends only on $p, n, \alpha, \delta$ and $\nu$. We finally take $\displaystyle \varepsilon_\nu=\frac{\varepsilon}{2^J}$ and then (\ref{ex. positivity ineq8}) yields that
\begin{equation*}
\frac{\big|Q_{4\rho} \cap \{u<\varepsilon_\nu L\}\big| }{\big|Q_{4\rho} \big|}<\nu,
\end{equation*}
which is the very assertion.
\end{proof}

\begin{rmk}\normalfont \label{choice of parameter}
Look at the choice of $\delta$ in (\ref{parameter}) and $\varepsilon$ in (\ref{parameter2}) in the proof of Proposition \ref{ex. prop}, from which we can choose $\varepsilon$ such that
\begin{equation}\label{choice of epsilon}
\varepsilon=\left(\frac{\delta}{2^I}\right)^{\frac{1}{q+1-p}}
\end{equation}
 for some large positive integer $I$. In the proof of Lemma \ref{ex. crucial lemma} and the choice of $k_j$, we also choose $k_j$ as follows:
\begin{equation}\label{choice of k_j}
 k_j=\displaystyle \frac{\varepsilon L} {2^{\frac{j}{q+1-p}}}\quad \textrm{for} \quad j=0,1,\ldots,J.
\end{equation}
Under such choice as above we note that $\displaystyle k_J=\left(\tfrac{\delta}{2^{I+J}}\right)^{\frac{1}{q+1-p}}L$ and obtain that $$\frac{\delta L^{q+1-p}\rho^p}{(k_J)^{q+1-p} \rho^p}=2^{J+I},$$ which is a positive integer.\\
 \quad Following a similar argument to \cite[p.76]{DiBenedetto2}, we next divide $Q_{4\rho}(z_{0})$ into finitely many subcylinders. For any $\nu \in (0,1)$, let $J$ be determined in (\ref{def of J}). We divide $Q_{4\rho}(z_{0})$ along time direction into parabolic cylinders of number $s_{0}:=2^{I+J}$ with each time-length $k_{J}^{q+1-p}\rho^{p}$, and set
\begin{equation*}
Q^{(\ell)}:=B_{4\rho}(x_{0}) \times \left(t_{0}+\ell k_{J}^{q+1-p}\rho^{p},\,\,t_{0}+(\ell+1) k_{J}^{q+1-p}\rho^{p} \right)
\end{equation*}
for $\ell=0,1,\ldots ,s_{0}-1$.
\begin{center}
\begin{tikzpicture}[scale=1.4]
\draw[thin] (-1.0,0) -- (1.0,0);
\draw[thin] (-1.0,0.5) -- (1.0,0.5);
\draw (-1,-0.5) rectangle (1.0,1.0);
\draw[densely dotted] (-1.5,1.0) -- (-1.0,1.0);
\draw[densely dotted] (-1.5,0.5) -- (-1.0,0.5);
\draw[densely dotted] (-1.5,0) -- (-1.0,0);
\draw[densely dotted] (-1.5,-0.5) -- (-1.0,-0.5);
\draw[->] (-1.5,-1.0) -- (1.5, -1.0)node[right]{\footnotesize $x \in \mathbb{R}^{n}$};
\draw[->] (-1.50,-1.4)-- (-1.5,1.5)node[right]{\footnotesize $t$};
\draw[thick, <->] (1.5,0)-- (1.5,0.5);
\draw (2.8, 0.2)node{\footnotesize height: $(k_{J})^{q+1-p}\rho^{p}$};
\filldraw [pattern=north east lines, pattern color=blue,] (-0.99,0.01) rectangle (0.99,0.49);
\draw  (0.4,0.2)node[left]{\footnotesize $Q^{(\ell)}$};
\draw  (-1.5,1.0)node[left]{\footnotesize $t_{0}+\delta L^{q+1-p}\rho^{p}$};
\draw  (-1.5,0.5)node[left]{\footnotesize $t_{0}+(\ell+1) k_{J}^{q+1-p}\rho^{p}$};
\draw  (-1.5,0)node[left]{\footnotesize $t_{0}+\ell k_{J}^{q+1-p}\rho^{p}$};
\draw  (-1.5,-0.5)node[left]{\footnotesize $t_{0}$};
\draw  (0,-0.5)node[below]{\footnotesize $B_{4\rho}(x_{0})$};
\draw (0,-1.3)node[below] {\footnotesize  Figure: Image of $Q^{(\ell)}$};
\end{tikzpicture}
\end{center}
Then there exists a $Q^{(\ell)}$ which holds that
\begin{equation}\label{Q^l}
|Q^{(\ell)} \cap \{u<k_{J}\}| <\nu |Q^{(\ell)}|.
\end{equation}
\end{rmk}
The following theorem enables us to have the positivity of a solution of (\ref{maineq'}) in a small interval.

\begin{thm}[Expansion of local positivity]\label{Expansion of positivity}
Let $u$ be a nonnegative weak supersolution of (\ref{maineq'}). Let $B_\rho (x_0) \subset \Omega$ with center $x_0 \in \Omega$ and radius $\rho > 0$, and $t_0 \in (0, T]$. Suppose that (\ref{ex. propassumption}). Under (\ref{Q^l}) there exists a positive number $\eta<1$ such that
\begin{equation}\label{conc. Q^l}
u \geq \eta L\quad a.e.\quad B_{2\rho}(x_{0})\times \left(t_{0}+\left(\ell+\frac{1}{2} \right) k_J^{q+1-p} \rho^p,\,\,t_{0}+(\ell+1) k_{J}^{q+1-p}\rho^{p}\right).
\end{equation}
\end{thm}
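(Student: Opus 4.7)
The plan is to run a De~Giorgi iteration inside the cylinder $Q^{(\ell)}$, shrinking the space-time domain to $B_{2\rho}(x_0) \times (t_0+(\ell+\tfrac12)\Theta, t_0+(\ell+1)\Theta)$ while decreasing the truncation level from $k_J$ down to $k_J/2$, and then appeal to the fast geometric convergence lemma to conclude that the bad set is empty. Write $s_0 = t_0 + \ell \Theta$, $s_1 = t_0 + (\ell+1)\Theta$ with $\Theta = k_J^{q+1-p}\rho^p$. I introduce the nested family
\begin{equation*}
\rho_n := 2\rho + \tfrac{2\rho}{2^n}, \qquad \tau_n := s_0 + \tfrac{\Theta}{2} - \tfrac{\Theta}{2^{n+1}}, \qquad k_n := \tfrac{k_J}{2} + \tfrac{k_J}{2^{n+1}},
\end{equation*}
so that $Q_n := B_{\rho_n}(x_0) \times (\tau_n, s_1)$ shrinks from $Q^{(\ell)}$ (at $n=0$, since $\tau_0 = s_0$) to $B_{2\rho}(x_0) \times (s_0+\Theta/2, s_1)$, and $k_n \searrow k_J/2$. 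Let $A_n := Q_n \cap \{u<k_n\}$ and $Y_n := |A_n|/|Q_n|$; by (\ref{Q^l}) we have $Y_0 < \nu$.

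I take cutoffs $\zeta_n(x)\chi_n(t)$ supported in $Q_n$, equal to $1$ on $Q_{n+1}$, with $\chi_n(\tau_n)=0$, and satisfying $|\nabla\zeta_n| \le C\,2^n/\rho$ and $|\partial_t\chi_n| \le C\,2^n/\Theta$. Because $\zeta_n\chi_n$ vanishes at the initial time $\tau_n$, the Caccioppoli estimate \eqref{local energy ineq} applied at level $k_n$ has no initial-data term. Using $(k_n-u)_+ \le k_J$, one obtains
\begin{equation*}
\esssup_{\tau_n<t<s_1}\int (k_n-u)_+^{q+1}(\zeta_n\chi_n)^p\,dx + \int_{Q_n}\bigl|\nabla\bigl((k_n-u)_+\zeta_n\chi_n\bigr)\bigr|^p dz \;\le\; C\,\frac{2^{np}\,k_J^p}{\rho^p}\,|A_n|,
\end{equation*}
where the time-derivative contribution $k_n^{q-1}(k_n-u)_+^2|\partial_t\chi_n| \le k_J^{q+1}\cdot C\,2^n/(k_J^{q+1-p}\rho^p)$ matches the gradient contribution $k_J^p/\rho^p$ by design of $\Theta$. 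Inserting this into the parabolic Sobolev embedding Proposition \ref{parabolic embedding1} with $r = q+1$, and using that on $A_{n+1}$ one has $(k_n-u)_+ \ge k_n - k_{n+1} = k_J/2^{n+2}$ and $\zeta_n\chi_n \equiv 1$, yields
\begin{equation*}
\Bigl(\frac{k_J}{2^{n+2}}\Bigr)^{p(n+q+1)/n} |A_{n+1}| \;\le\; C\Bigl(\frac{2^{np}\,k_J^p}{\rho^p}|A_n|\Bigr)^{1+p/n}.
\end{equation*}

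Since $q+1 = np/(n-p)$, a direct exponent count shows that both $k_J$ and $\rho$ drop out of the recursion for $Y_n = |A_n|/|Q_n|$ (this is precisely the intrinsic scaling), producing
\begin{equation*}
Y_{n+1} \;\le\; C\,b^n\,Y_n^{1+p/n}
\end{equation*}
with $C, b > 1$ depending only on $p, n, q$. Choosing $\nu$ in (\ref{Q^l}) small enough that $Y_0 < \nu \le C^{-n/p} b^{-n^2/p^2}$, Lemma \ref{Fast geometric convergence} gives $Y_n \to 0$; consequently $|\{u < k_J/2\} \cap B_{2\rho}(x_0)\times(s_0+\Theta/2, s_1)| = 0$. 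Since $k_J = \varepsilon_\nu L$ by the construction in Remark \ref{choice of parameter}, this yields $u \ge \eta L$ a.e. on the stated cylinder with $\eta := \varepsilon_\nu/2$.

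The main obstacle is twofold: first, tailoring the test function so that $\zeta_n\chi_n$ vanishes at the initial slice $t = \tau_n$ (otherwise the $\int A^-(k_n,u)$ term at $\tau_n$ cannot be controlled, as we only have information about the full cylinder, not at one time level); and second, checking that the intrinsic time scale $\Theta = k_J^{q+1-p}\rho^p$ makes the Sobolev-type recursion $k_J$-homogeneous so that the smallness threshold for $\nu$ can be fixed independently of $L$ and $\rho$. Once these scaling points are settled, the iteration reduces to a routine De~Giorgi argument via Lemma \ref{Fast geometric convergence}.
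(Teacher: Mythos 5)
Your proposal is correct and follows essentially the same argument as the paper: the same nested cylinders shrinking from $Q^{(\ell)}$ to $B_{2\rho}(x_0)$ times the latter half-interval, levels decreasing from $k_J$ to $k_J/2$, a time cutoff vanishing at the initial slice to eliminate the initial-data term in the Caccioppoli estimate, the parabolic Sobolev embedding with $r=q+1$, the intrinsic scale $\Theta=k_J^{q+1-p}\rho^p$ making the recursion for $Y_n$ dimensionless, and Lemma \ref{Fast geometric convergence} with $\nu=\nu_0$. The only difference from the paper's proof is cosmetic (you work in forward-time coordinates instead of translating the top of $Q^{(\ell)}$ to the origin), so nothing further is needed.
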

\begin{proof}
Hereafter we fix the parameters $\rho,\ell$ and $k_J$.  By translation we may assume to shift $(x_0, t_0 + (\ell+1) k_J^{q+1-p} \rho^p)$
to the origin and thus, $Q^{(l)}$ is transformed to $B_{4 \rho} (x_0) \times
(- k_J^{q +1-p}\rho^p, 0)$. For $m=0,1,2,\ldots $, let
\begin{equation*}
\tau_m =\frac{\rho^p}{2} \left(1 + \frac{1}{2^m}\right),\quad \rho_m =\frac{\rho}{2} \left(1 + \frac{1}{2^m}\right);\quad B_{m}:=B_{4\rho_{m}},\quad Q_{m}:= B_m \times (-\theta \tau_m, 0),
\end{equation*}
where $\theta:=k_{J}^{q+1-p}$,
and also set
\begin{equation*}
\kappa_{m}:=\left(\frac{1}{2}+\frac{1}{2^{m+1}} \right)k_{J}.
\end{equation*}
It plainly holds true that
\begin{equation*}
\begin{cases}
\rho^{p}=\tau_{0}\geq \tau_{m} \searrow \tau_{\infty}=\rho^{p}/2,\quad \rho =\rho_{0}\geq \rho_m \searrow \rho_{\infty}=\rho/2 \\[2mm]
Q_0 = Q^{(\ell)}
\supset Q_m \searrow Q_{\infty}=B_{2\rho} \times (-\tfrac12 \theta \rho^{p},0) \\[2mm]
k_{J}=\kappa_{0} \geq \kappa_m \searrow \kappa_{\infty}=k_{J}/2
\end{cases}
\end{equation*}
The cutoff function $\zeta$ is taken of the form $\zeta(x,t)=\zeta_{1}(x)\zeta_{2}(t)$, where $\zeta_i \,(i =1,2)$ are Lipschitz functions satisfying
\begin{equation*}
\zeta_{1}:=\begin{cases}
1\quad &\textrm{in}\quad  B_{m+1}\\
0\quad & \textrm{in}\quad  \mathbb{R}^{n}\,\backslash\,B_{m}
\end{cases}
,\qquad |\nabla \zeta_{1}| \leq \frac{1}{\rho_{m}-\rho_{m+1}}=\frac{2^{m+2}}{\rho}
\end{equation*}
and
\begin{equation*}
\zeta_{2}:=\begin{cases}
0\quad &\textrm{for}\quad t \leq-\theta\tau_{m}\\
1\quad &\textrm{for}\quad t> -\theta \tau_{m+1}
\end{cases}
,\qquad 0 \leq \zeta_{2,t}\leq \frac{1}{\theta(\tau_{m}-\tau_{m+1})} \leq \frac{2^{p(m+2)}}{\theta \rho^{p}}.
\end{equation*}
Thus, applying the local energy inequality (\ref{local energy ineq}) over $B_{m}$ and $Q_{m}$ to the truncated solution $(\kappa_{m}-u)_{+}$ and above $\zeta$, we obtain
\begin{align}\label{eneiq1}
\esssup_{-\theta \tau_{m}<t<0}\int_{B_{m}}&(\kappa_{m}-u(t))_{+}^{q+1}\zeta^{p}\,dx+\int_{Q_{m}}|\nabla (\kappa_{m}-u)_{+}|^{p}\zeta^{p}\,dz \notag \\
&\leq C\int_{Q_{m}}(\kappa_{m}-u)_{+}^{p}|\nabla \zeta|^{p}\,dz+C\int_{Q_{m}}\kappa_{m}^{q-1}(\kappa_{m}-u)_+^{2}|\zeta_{t}|\,dz \notag \\
&\leq C\left(\frac{2^{m+2}}{\rho}\right)^{p}\kappa_{m}^{p}\int_{Q_{m}} \left(1+\frac{\kappa_{m}^{q+1-p}}{\theta} \right)\chi_{\{(\kappa_{m}-u)_{+}>0\}}\,dz \notag \\
&\leq C\left(\frac{2^{m+2}}{\rho}\right)^{p}\kappa_{m}^{p}\int_{Q_{m}}\chi_{\{(\kappa_{m}-u)_{+}>0\}}\,dz,
\end{align}
where we used that $\displaystyle \frac{\kappa_{m}^{q+1-p}}{\theta}=\left(\frac{\kappa_{m}}{k_{J}} \right)^{q+1-p}\leq 1$.
Combining Proposition \ref{parabolic embedding1} with (\ref{eneiq1}), we have
\begin{align}\label{eq1 of positivity of Ql}
&\quad \int_{Q_{m}}|(\kappa_{m}-u)_{+}\zeta|^{q+1}\,dz=\int_{Q_{m}}|(\kappa_{m}-u)_{+}\zeta|^{p\frac{n+q+1}{n}}\,dz \notag \\
&\leq C\bigg(\int_{Q_{m}}\big|\nabla[(\kappa_{m}-u)_{+}\zeta]\big|^{p}\,dz \bigg)\bigg(\esssup_{-\theta \tau_{m}<t<0}\int_{B_{m}}|(\kappa_{m}-u(t))_{+}\zeta|^{q+1}\,dz\bigg)^\frac{p}{n} \notag \\
&\leq  C\left(\frac{2^{m+2}}{\rho}\right)^{p(1+\frac{p}{n})}k_{J}^{p(1+\frac{p}{n})}\left(\int_{Q_{m}}\chi_{\{(\kappa_{m}-u)_{+}>0\}}\,dz\right)^{1+\frac{p}{n}},
\end{align}
where we note that $q+1=\frac{p(n+q+1)}{n}$ in the second line.

The left hand side of (\ref{eq1 of positivity of Ql}) is estimated from below as
\begin{align}\label{eq2 of positivity of Ql}
\int_{Q_{m}}[(\kappa_{m}-u)_{+}\zeta]^{q+1}\,dz &\geq \int_{Q_{m}}[(\kappa_{m}-u)_{+}\zeta]^{q+1}\chi_{\{(\kappa_{m+1}-u)_{+}>0\}}\,dz \notag \\
& \geq |\kappa_{m}-\kappa_{m+1}|^{q+1}\int_{Q_{m+1}}\chi_{\{(\kappa_{m+1}-u)_{+}>0\}}\,dz\notag \\
&=\bigg(\frac{k_{J}}{2^{m+2}} \bigg)^{q+1}\int_{Q_{m+1}}\chi_{\{(\kappa_{m+1}-u)_{+}>0\}}\,dz.
\end{align}
Hence, by (\ref{eq1 of positivity of Ql}) and (\ref{eq2 of positivity of Ql}), we have
\begin{align}\label{eq3 of positivity of Ql}
\int_{Q_{m+1}}\chi_{\{(\kappa_{m+1}-u)_{+}>0\}}\,dz \leq C\frac{[2^{p(1+\frac{p}{n})+q+1}]^{m}}{\rho^{p(1+\frac{p}{n})}}k_{J}^{p(1+\frac{p}{n})-(q+1)}\left(\int_{Q_{m}}\chi_{\{(\kappa_{m}-u)_{+}>0\}}\,dz\right)^{1+\frac{p}{n}},
\end{align}
where we compute $C\big(\frac{k_{J}}{2^{m+2}} \big)^{-(q+1)}\left(\frac{2^{m+1}}{\rho}\right)^{p(1+\frac{p}{n})}k_{J}^{p(1+\frac{p}{n})}=C\frac{[2^{p(1+\frac{p}{n})+q+1}]^{m}}{\rho^{p(1+\frac{p}{n})}}k_{J}^{p(1+\frac{p}{n})-(q+1)}$.
Dividing the both side of (\ref{eq3 of positivity of Ql}) by $|Q_{m+1}|>0$, we have
\begin{align}\label{eq4 of positivity of Ql}
\dashint_{Q_{m+1}}\chi_{\{(\kappa_{m+1}-u)_{+}>0\}}\,dz %&\leq C\frac{[2^{p(1+\frac{p}{n})+q+1}]^{m}}{\rho^{p(1+\frac{p}{n})}}k_{J}^{p(1+\frac{p}{n})-(q+1)}\frac{|Q_{m}|^{1+\frac{p}{n}}}{|Q_{m+1}|}\left(\dashint_{Q_{m}}\chi_{\{(\kappa_{m}-u)_{+}>0\}}\,dz\right)^{1+\frac{p}{n}}
%\notag \\
\leq C[2^{p(1+\frac{p}{n})+q+1}]^{m}\left(\dashint_{Q_{m}}\chi_{\{(\kappa_{m}-u)_{+}>0\}}\,dz\right)^{1+\frac{p}{n}},
\end{align}
where
\begin{equation*}
\frac{|Q_{m}|^{1+\frac{p}{n}}}{|Q_{m+1}|}
\leq C\rho^{p(1+\frac{p}{n})}(k_{J}^{q+1-p})^{\frac{p}{n}}
\end{equation*}
and $p(1+\frac{p}{n})-(q+1)+(q+1-p)\frac{p}{n}=0$ are used.
Letting $\displaystyle Y_{m}:=\dashint_{Q_{m}}\chi_{\{(\kappa_{m}-u)_{+}>0\}}\,dz$, the above inequality (\ref{eq4 of positivity of Ql}) is rewritten as
\begin{equation*}
Y_{m}\leq Cb^{m}Y_{m}^{1+\frac{p}{n}},\quad m=0,1,\ldots,
\end{equation*}
where $b:=2^{p(1+\frac{p}{n})+q+1}$. From Lemma \ref{Fast geometric convergence}, we find that if the initial value $Y_{0}$ satisfies
\begin{equation}\label{condition of Y_{0}}
Y_{0}\leq C^{-(\frac{n}{p})}b^{-(\frac{n}{p})^{2}}=:\nu_{0},
\end{equation}
then
\begin{equation}\label{conclusion of Y_{m}}
Y_{m} \to 0\quad \textrm{as}\quad m \to \infty.
\end{equation}
Eq.(\ref{Q^l}) is equivalent to (\ref{condition of Y_{0}}) by taking $\nu=\nu_0$, and then (\ref{conclusion of Y_{m}}) leads to the conclusion (\ref{conc. Q^l}) by putting $\eta=\frac{k_{J}}{2L}<1$.
\end{proof}

\begin{rmk}\normalfont
Theorem \ref{Expansion of positivity} asserts that the positivity propagates after the lapse of some time. If a solution is positive at some time $t_0$, its positivity expands in space-time  without ''waiting time'', which is in the next corollary.
\end{rmk}

\begin{cor}\label{key cor}Let $u$ be a nonnegative weak supersolution of (\ref{maineq'}). Assume  that $u(t_0)>0$ almost everywhere in $B_{4 \rho} (x_0) \subset \Omega$. Then there exist positive numbers $\eta_0$ and  $\tau_0$ such that \begin{equation*}
u \geq \eta_0 \quad \textrm{a.e.}\quad  \textrm{in}\quad  B_{2\rho}(x_0) \times (t_0, t_0+\tau_0).
\end{equation*}
\end{cor}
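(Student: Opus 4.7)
The approach I would take is to combine Proposition~\ref{ex. prop}, Lemma~\ref{ex. crucial lemma}, and Theorem~\ref{Expansion of positivity}, and then iterate at shrinking spatial scales to eliminate the intrinsic waiting time of Theorem~\ref{Expansion of positivity}.

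Since $u(\cdot, t_0) > 0$ almost everywhere in $B_{4\rho}(x_0)$, the monotone convergence theorem supplies, for any $\alpha \in (0,1)$, a value $L > 0$ with
\[
\big|B_\rho(x_0) \cap \{u(t_0) \geq L\}\big| \geq \alpha |B_\rho|.
\]
I would fix $\alpha = 1/2$. Proposition~\ref{ex. prop} then preserves density on $[t_0, t_0 + \delta L^{q+1-p}\rho^p]$; Lemma~\ref{ex. crucial lemma} with $\nu$ small shrinks the bad set on $Q_{4\rho}(z_0)$; and the pigeonhole argument in Remark~\ref{choice of parameter} produces a subcylinder $Q^{(\ell_0)}$ satisfying \eqref{Q^l}. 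Applying Theorem~\ref{Expansion of positivity} gives
\[
u \geq \eta L \quad \text{a.e. on}\quad B_{2\rho}(x_0) \times I_{\ell_0},
\]
where $I_{\ell_0}$ is the second half of $Q^{(\ell_0)}$; its left endpoint is at least $t_0 + k_J^{q+1-p}\rho^p/2$, so a gap near $t_0$ remains.

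To collapse this waiting time, I would repeat the whole argument at spatial scales $\rho_k = 2^{-k}\rho$ for $k=1,2,\ldots$: cover $\overline{B_{2\rho}(x_0)}$ by finitely many balls $B_{2\rho_k}(x^{(k)}_i)$ with $B_{4\rho_k}(x^{(k)}_i) \subset B_{4\rho}(x_0)$; on each $B_{4\rho_k}(x^{(k)}_i)$ the a.e.\ positivity hypothesis is inherited, so the above chain yields positivity at level $\eta L_k$ on $B_{2\rho_k}(x^{(k)}_i) \times (t_0 + c_k, t_0 + C_k)$, where $c_k, C_k$ are of order $\rho_k^p$ and the ratio $C_k/c_k$ equals the scale-invariant constant $2^{I+J+1}$. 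Unioning over $i$ promotes this to positivity on $B_{2\rho}(x_0) \times (t_0 + c_k, t_0 + C_k)$. By choosing the parameter $J$ in Lemma~\ref{ex. crucial lemma} large enough that $I + J + 1 \geq p$, the intervals at consecutive scales overlap, and the union over $k$ covers some $(t_0, t_0 + \tau_0)$ up to a set of measure zero.

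The main obstacle is to ensure that the positivity level $\eta L_k$ stays uniformly bounded below as $\rho_k \to 0$, since a priori the level $L_k$ needed to verify the density hypothesis at finer scales could collapse. I would resolve this by selecting, once and for all, a small $L > 0$ as in the first paragraph and then choosing each covering centre $x^{(k)}_i$ at a Lebesgue density point of $\{u(t_0) \geq L\}$: there the density of this set in $B_{\rho_k}(x^{(k)}_i)$ tends to $1$ as $\rho_k \to 0$, so the hypothesis of Proposition~\ref{ex. prop} is met with the \emph{same} level $L_k \equiv L$ for every sufficiently small $\rho_k$ and every centre in the (finite) covering at that scale. With this uniformity, $\eta_0 := \eta L > 0$ serves for all scales and $\tau_0$ is the right endpoint of the chain of overlapping intervals.
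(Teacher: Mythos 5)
Your reduction to Proposition~\ref{ex. prop}, Lemma~\ref{ex. crucial lemma} and Theorem~\ref{Expansion of positivity} is fine as far as it goes, but the multi-scale step that is supposed to remove the waiting time has a genuine gap. Theorem~\ref{Expansion of positivity}, fed by the pigeonhole of Remark~\ref{choice of parameter}, only gives positivity on the \emph{second half of one subcylinder} $Q^{(\ell)}$ whose index $\ell\in\{0,\dots,2^{I+J}-1\}$ is not controlled: at scale $\rho_k$ you obtain an interval of length $\tfrac12 k_J^{q+1-p}\rho_k^p$ located at an unknown position inside $(t_0,\,t_0+\delta L^{q+1-p}\rho_k^p]$, not an interval $(t_0+c_k,t_0+C_k)$ with the fixed ratio $C_k/c_k=2^{I+J+1}$. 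Consequently the intervals produced at consecutive scales need not overlap, no matter how large you take $J$; worse, at a fixed scale the windows coming from different covering balls $B_{4\rho_k}(x^{(k)}_i)$ may have different indices $\ell$ and hence be disjoint in time, so the union over $i$ does not yield positivity on a common slab $B_{2\rho}(x_0)\times(t_0+c_k,t_0+C_k)$ at all. The density-point device does not rescue the scheme either: for a single level $L$ the set $\{u(t_0)\ge L\}$ may miss an entire open subregion of $B_{2\rho}(x_0)$, so its Lebesgue density points cannot serve as centres of a covering of $\overline{B_{2\rho}(x_0)}$ by balls of radius $2\rho_k$; if you let the level depend on the centre, the lower bound $\eta L_k$ is no longer uniform, which was precisely the obstacle you set out to avoid.

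The missing idea, and the one the paper uses, is that when the positivity information is available at the time slice $t=t_0$ itself, no cutoff in time is needed: take $L:=\inf_{B_{4\rho}(x_0)}u(t_0)>0$ and rerun Proposition~\ref{ex. prop}, Lemma~\ref{ex. crucial lemma} and the De Giorgi iteration of Theorem~\ref{Expansion of positivity} on forward cylinders $B_{\rho_m}(x_0)\times(t_0,t_0+\theta\rho^p)$, $\theta=\delta k_J^{q+1-p}$, anchored at $t_0$, with a cutoff in space only. Since $\kappa_m\le k_J\le L\le u(t_0)$ a.e.\ in $B_{4\rho}(x_0)$, the initial term $\int \kappa_m^{q-1}(\kappa_m-u(t_0))_+^2\zeta^p\,dx$ in the Caccioppoli estimate (\ref{local energy ineq}) vanishes identically, and this is exactly what eliminates the waiting time: the fast geometric convergence then gives $u\ge\eta L$ a.e.\ in $B_{2\rho}(x_0)\times(t_0,t_0+\delta k_J^{q+1-p}\rho^p)$ in one stroke, so one may take $\eta_0=\eta L$ and $\tau_0=\delta k_J^{q+1-p}\rho^p$. (Your attempt to work only with ``$u(t_0)>0$ a.e.''\ rather than a positive essential infimum is more ambitious than the paper's argument, but as written the covering/overlap mechanism does not close the gap.)
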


\begin{proof} Suppose that $(x_0, t_0)$ be the origin, as before. Let $L:=\inf_{B_{4 \rho}}u(0)>0$. Since $|B_{\rho} \cap \{u(0) \geq L\} |=|B_{\rho}|$, by Proposition \ref{ex. prop}, there exist positive numbers $\delta, \varepsilon$ depending only on $n,p$ and independent of $L$ such that
\begin{equation}\label{ineq of B_4}
 \big| \{u(t) \geq \varepsilon L\} \cap B_{\rho} \big| \geq \frac{1}{2}|B_{\rho}|
\end{equation}
for all $t \in [0,\delta L^{q+1-p}\rho^p]$.
Let $Q^\theta_{4\rho}(z_0)=B_{4\rho}(x_0) \times (0,\theta \rho^p) \in \Omega_T$, where $0<\theta<\delta L^{q+1-p}\rho^{p}$ is a parameter determined later.  By Lemma \ref{ex. crucial lemma} with some minor change, for any $\nu \in (0,1)$ there exists a positive number $\varepsilon_\nu$ depending only on $p,n,\delta$ and $\nu$ such that
\begin{equation}\label{condition of Q^{theta}}
\big|Q^{\theta}_{4\rho}(z_{0}) \cap \{u<\varepsilon_\nu L\} \big| <\nu \big|Q^{\theta}_{4\rho}\big|.
\end{equation}
Here we notice that in the proof of Proposition \ref{ex. prop} and Lemma \ref{ex. crucial lemma}, we do not need to use the cutoff on time. In the following we modify the proof of Theorem \ref{Expansion of positivity} to that without any cutoff on time.

By translation, we may assume $x_0=0$ as before.  For $m=0,1,\ldots,$ we put %
\begin{equation*}
\rho_m:=\left(2+\frac{1}{2^{m-1}}\right)\rho,\quad Q_m:=B_{\rho_m} \times (0,\theta \rho^p),\quad \theta=\delta k_J^{q+1-p}<\delta L^{q+1-p}
\end{equation*}
where $k_{J}=\frac{\varepsilon L}{2^J}$  and $J$ is to be determined in (\ref{def of J}), and also set
\begin{equation*}
\kappa_{m}:=\left(\frac{1}{2}+\frac{1}{2^{m+1}} \right)k_{J}.
\end{equation*}
Clearly it holds true that
\begin{equation*}
\begin{cases}
4\rho =\rho_{0}\geq \rho_m \searrow \rho_{\infty}=2\rho , \quad
Q_0\supset Q_m \searrow Q_{\infty}=B_{2\rho} \times (0,\theta \rho^{p})\quad  ; \\[2mm]
 k_{J}=\kappa_{0} \geq \kappa_m \searrow \kappa_{\infty}=k_{J}/2.
\end{cases}
\end{equation*}
A cutoff function $\zeta$ is chosen of the form $\zeta(x,t)=\zeta_1(x)$, where %
\begin{equation*}
\zeta_1=\begin{cases}
 1 \quad & x \in B_{\rho_{m+1}}\\
 0 \quad & x \in \mathbb{R}^n \,\backslash\, B_{\rho_m}
 \end{cases}
 ,\quad |\nabla \zeta_1(x)| \leq \frac{1}{\rho_m-\rho_{m+1}}=\frac{2^{m+1}}{\rho^m}.
 \end{equation*}
From the Caccioppoli type inequality (\ref{local energy ineq}), applied for the truncated solution $Q_m$ to the truncated $(\kappa_m-u)_+$ again, we obtain that
\begin{align}\label{ex. positivity key ineq}
&\esssup_{0<t<\theta\rho^{p}}\int_{B_{m}}(\kappa_{m}-u(t))_{+}^{q+1}\zeta^{p}\,dx+\int_{Q_m}|\nabla (\kappa_m-u)_+|^p\zeta^p\,dz \notag\\
&\qquad \leq \int_{B_m}\kappa_m^{q - 1} (\kappa_m - u (0))_+^2 \zeta^p\,dx+ C\int_{Q_m} (\kappa_m-u)_+^p|\nabla \zeta|^p \,dz\notag \\
 & \qquad \leq C \left(\frac{\rho}{2^{m+1}} \right)^{p}\kappa_m^p \int_{Q_m} \chi_{\{(\kappa_m-u)_+>0\}}\,dz,
\end{align}
where, in the second line, we used $(\kappa_{m}-u(0))_{+}=0$ in $B_{m}$. By the very same argument as in the proof of Theorem \ref{Expansion of positivity}, we have that
\begin{align}\label{eq4 of positivity of Q}
\dashint_{Q_{m+1}}\chi_{\{(\kappa_{m+1}-u)_{+}>0\}}\,dz \leq C[2^{p(1+\frac{p}{n})+q+1}]^{m}\left(\dashint_{Q_{m}}\chi_{\{(\kappa_{m}-u)_{+}>0\}}\,dz\right)^{1+\frac{p}{n}}.
\end{align}
Letting $\displaystyle Y_m:=\dashint_{Q_m} \chi_{\{(\kappa_m-u)_+>0\}}\,dz$, the above inequality (\ref{eq4 of positivity of Q}) is rewritten as:
\begin{equation*}
 Y_{m+1} \leq Cb^m Y_m^{1+\frac{p}{n}},\quad m=0,1,\ldots,
 \end{equation*}
 where $b:=2^{p(1+\frac{p}{n})+q+1}>1$.
 By Lemma \ref{Fast geometric convergence} on fast geometric convergence if
 \begin{equation}\label{condition2 on Y_{0}}
 Y_0 \leq (Cd)^{-\frac{n}{p}}b^{-(\frac{n}{p})^2}=:\nu_0
 \end{equation}
 then
 \begin{equation}\label{limit2 Y_{n}}
 Y_m \to 0\quad \textrm{as}\quad  m \to \infty.
 \end{equation}
 Eq. (\ref{condition2 on Y_{0}}) is equivalent to (\ref{condition of Q^{theta}}) by taking $\nu=\nu_0$, and then (\ref{limit2 Y_{n}}) yields that
\begin{equation*}
 u \geq \eta L\quad \textrm{a.e.}\,\,\textrm{in}\,\, B_{2\rho}\times (0,\delta k_{J}^{q+1-p}\rho^{p}),
\end{equation*}
 where $\eta=\frac{1}{2}\frac{\varepsilon }{2^{J}}$. Thus, letting $\eta_{0}:=\eta L$ and $\tau_{0}:=\delta k_{J}^{q+1-p}\rho^{p}$, we reach the conclusion.
%Next, starting at the time $t_{1}=\frac{1}{2}\delta k_{J}^{q+1-p}\rho^{p}$, we perform similar argument to have
%
%\begin{equation*}
%u \geq \eta_{1}L \quad \textrm{a.e.}\,\,\textrm{in}\,\,\quad B_{2\rho} \times %\left(t_{1},t_{1}+\frac{1}{2}\delta (\eta L)^{q+1-p}\rho^p\right),
%\end{equation*}
%%
%where $\eta_{1}=\eta_{1}(\eta)<1$ is a positive constant. Iterate this argument to reach the conclusion.
\end{proof}

%%%%%%%%%%%%%%%%%%%%%%%%%%%%
\subsection{Expansion of interior positivity II}
%%%%%%%%%%%%%%%%%%%%%%%%%%%%%

We continue to study the expansion of positivity of a nonnegative solution. Let $\Omega^\prime$ be a subdomain contained compactly in $\Omega$. Using Theorem \ref{Expansion of positivity} and a method of chain of finitely many balls as used in Harnack's inequality for harmonic functions, which is so-called \textit{Harnack chain} (see \cite[Theorem 11, pp.32--33]{Evans} and~\cite{AKN,KMN} in the $p$-parabolic setting), we have the following theorem. Here we use the special choice of parameters, as explained before Theorem \ref{Expansion of positivity}.

\begin{thm}\label{ex. thm prime}
Let u be a nonnegative weak supersolution of (\ref{maineq'}). Let $\Omega^\prime$ be a subdomain contained compactly in $\Omega$. Let $t_0 \in (0, T]$. Suppose that
\begin{equation}\label{ex. propassumption prime}
\big|\Omega^\prime \cap \{u(t_0) \geq L\} \big| \geq \alpha |\Omega^\prime|
\end{equation}
holds for some $L>0$ and $\alpha \in (0,1]$. Then there exist positive integer $N=N(\Omega^\prime)$ and positive real number families $\{\delta_m\}_{m=0}^{N},\,\{\eta_{m}\}_{m=1}^{N+1}\subset (0,1),\,\{J_m\}_{m=0}^{N},\,\{I_m\}_{m=0}^{N}\subset \mathbb{N}$ depending on $p, n, \alpha $ and independent of $L$, a time $t_N>t_0$ such that
\begin{equation*}
u \geq \eta_{N+1} L
\end{equation*}
almost everywhere in
\begin{equation*} % \label{e.}
\Omega^{\prime}\times \left(t_N+\left(k+\frac{1}{2}\right)\frac{\delta_N(\eta_NL)^{q+1-p}}{2^{J_N+I_N}}\rho^p,\,\,t_N+(k+1)\frac{\delta_N(\eta_NL)^{q+1-p}}{2^{J_N+I_N}}\rho^p \right)
\end{equation*}
for some $k \in \left\{0,1,\ldots, 2^{J_N+I_N}-1\right\}$, where $t_{N}$ is written as
\begin{equation*}
t_{N}=t_{0}+\sum \limits_{m=1}^{N}\left(\ell+\frac{3}{4}\right)\frac{\delta_{m-1}(\eta_{m-1}L)^{q+1-p}}{2^{J_{m-1}+I_{m-1}}}\rho^p
\end{equation*}
for some $\ell \in \{0,1,\ldots,2^{J_{m-1}+I_{m-1}-1}\}$.
\end{thm}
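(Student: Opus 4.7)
The strategy is to iterate Theorem~\ref{Expansion of positivity} along a Harnack chain of balls covering $\Omega'$, a classical device whose parabolic adaptation works here because the waiting-time parameter $\theta = \delta L^{q+1-p}$ in Theorem~\ref{Expansion of positivity} is explicit in $L$, so each application can be replayed with the updated level. Since $\Omega'$ is compactly contained in $\Omega$ and in particular bounded, I fix a radius $\rho>0$ with $4\rho<\mathrm{dist}(\Omega',\partial\Omega)$ and select a finite chain $x_1,\ldots,x_N\in\Omega'$ with $\bigcup_{m=1}^N B_{2\rho}(x_m)\supset\Omega'$ and $B_\rho(x_{m+1})\subset B_{2\rho}(x_m)$ for every $m$. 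The integer $N$ then depends only on the geometry of $\Omega'$.

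\textbf{Base step.} From the hypothesis~\eqref{ex. propassumption prime} and a covering of $\Omega'$ by finitely many balls of radius~$\rho$, a pigeonhole argument produces at least one ball $B_\rho(x_1)$ with $|B_\rho(x_1)\cap\{u(t_0)\geq L\}|\geq\alpha_0|B_\rho|$ for a constant $\alpha_0=\alpha_0(\alpha,\Omega')\in(0,1]$. Applying Theorem~\ref{Expansion of positivity} at level $\eta_0 L:=L$, together with the dyadic subdivision of Remark~\ref{choice of parameter}, yields parameters $\delta_0,J_0,I_0$ and an index $\ell_0$ such that
\[
u\ge \eta_1 L\quad \text{a.e. in } B_{2\rho}(x_1)\times\Bigl(t_0+\bigl(\ell_0+\tfrac12\bigr)\tau_0,\;t_0+(\ell_0+1)\tau_0\Bigr),
\]
where $\tau_0=\delta_0(\eta_0 L)^{q+1-p}\rho^p/2^{J_0+I_0}$ and $\eta_1\in(0,1)$ depends only on $p,n,\alpha_0$.

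\textbf{Iteration.} Set $t_1:=t_0+(\ell_0+\tfrac34)\tau_0$, which is an interior point of the positivity interval above. Since $B_\rho(x_2)\subset B_{2\rho}(x_1)$, we have trivially $|B_\rho(x_2)\cap\{u(t_1)\geq\eta_1 L\}|=|B_\rho(x_2)|$, i.e.\ the assumption of Theorem~\ref{Expansion of positivity} is met at $t_1$ with measure-ratio $\alpha=1$ and level $\eta_1 L$. A second application yields constants $\delta_1,J_1,I_1,\eta_2$ (the latter now of the form $\eta_2=\eta\cdot\eta_1$ with $\eta$ a universal constant) and an index $\ell_1$ so that $u\ge\eta_2 L$ a.e.\ on $B_{2\rho}(x_2)$ over the corresponding dyadic subinterval starting at $t_1$. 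Iterating this procedure $N$ times produces the claimed sequences $\{\delta_m\},\{\eta_m\},\{J_m\},\{I_m\}$ and a time $t_N$ with the recursive form
\[
t_N=t_0+\sum_{m=1}^N\Bigl(\ell_{m-1}+\tfrac34\Bigr)\frac{\delta_{m-1}(\eta_{m-1}L)^{q+1-p}}{2^{J_{m-1}+I_{m-1}}}\rho^p,
\]
as stated. A final application of Theorem~\ref{Expansion of positivity} at time $t_N$ on $B_\rho(x_N)$ (with $\alpha=1$) delivers $u\ge\eta_{N+1}L$ on $B_{2\rho}(x_N)$ during the target dyadic subinterval indexed by some $k\in\{0,\ldots,2^{J_N+I_N}-1\}$.

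\textbf{Main obstacle and synchronization.} The principal difficulty is the bookkeeping of times: each invocation of Theorem~\ref{Expansion of positivity} only locates the positivity on a single dyadic subinterval, whose index $\ell_m$ is not known a priori, and the balls $B_{2\rho}(x_1),\ldots,B_{2\rho}(x_{N-1})$ treated earlier must still carry positivity at the final time $t_N$ for the chain to cover $\Omega'$ simultaneously. The choice of restart times $t_m$ at the fraction $3/4$ of the positivity window guarantees that a full subsequent waiting time $\tau_m$ fits, and that positivity at level $\eta_{N+1}L$ can be propagated forward on each previously handled ball by re-applying Theorem~\ref{Expansion of positivity} with $\alpha=1$ and updated level $\eta_m L$; since $q+1-p>0$, the waiting times $\tau_m$ shrink as $m$ grows, so the synchronization at the common time $t_N$ closes up. This yields $u\ge\eta_{N+1}L$ a.e.\ on $\Omega'\subset\bigcup_{m=1}^N B_{2\rho}(x_m)$ over the claimed common subinterval, completing the proof.
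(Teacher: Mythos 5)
Your overall architecture (cover $\Omega'$ by a finite chain of balls, start from one ball where the measure hypothesis pigeonholes, apply Theorem~\ref{Expansion of positivity} with the dyadic subdivision of Remark~\ref{choice of parameter}, restart at the $3/4$-point of each positivity window, and iterate along the chain) is the same as the paper's, and your simplification of taking $B_\rho(x_{m+1})\subset B_{2\rho}(x_m)$ so that each iteration runs with measure ratio $\alpha=1$ is a legitimate variant of the paper's overlap condition $\rho<|x_m-x_{m+1}|<2\rho$, $D_m=B_m\cap B_{m+1}\neq\varnothing$. However, the step you yourself flag as the main obstacle --- that the balls handled earlier must still be positive on the \emph{final} window $\mathcal{I}^{(k)}_N$ --- is exactly where your argument has a genuine gap. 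Your proposed mechanism, ``re-apply Theorem~\ref{Expansion of positivity} with $\alpha=1$ on each previously handled ball to propagate positivity forward,'' does not work as stated: every application of that theorem only locates positivity on \emph{one} dyadic subinterval whose index is not controlled, so re-application on an earlier ball reproduces the same bookkeeping problem rather than solving it, and it would also keep degrading the level below $\eta_{N+1}L$ on those balls.

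The correct resolution --- and the content of Step 4 of the paper's proof --- is that no forward propagation is needed, because the successive positivity windows are \emph{nested}: $\mathcal{I}^{(\ell)}_{m-1}\supset\mathcal{I}^{(k)}_m$ for every admissible $k,\ell$, hence the final interval $\mathcal{I}^{(k)}_N$ is contained in all earlier ones and each ball $B_{m+1}$ is already known to be positive (at level $\eta_{m+1}L\geq\eta_{N+1}L$) throughout it. This nesting is not a soft consequence of ``the waiting times shrink''; it is a quantitative inequality that uses the special calibration of the parameters: by \eqref{def of eta_{m}} one has $(\eta_m L)^{q+1-p}=\frac{\delta_{m-1}}{2^{J_{m-1}+I_{m-1}}}(\eta_{m-1}L)^{q+1-p}$, so the entire new waiting cylinder has height $\delta_m(\eta_m L)^{q+1-p}\rho^p=\delta_m\cdot\frac{\delta_{m-1}(\eta_{m-1}L)^{q+1-p}}{2^{J_{m-1}+I_{m-1}}}\rho^p$, and since $\delta_m<\tfrac14$ by \eqref{parameter} this fits inside the last quarter of the previous window beyond its midpoint $t_m$; this is the computation $a-b>0$ in \eqref{ex. positivity in Omega^prime eq8}. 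Your proposal never states the nesting claim nor performs this estimate (and it also relies implicitly on the particular choice of $\varepsilon$ in \eqref{choice of epsilon} making $2^{J_m+I_m}$ subintervals fit exactly), so as written the synchronization at $t_N$ is asserted rather than proved. Supplying the nesting lemma and its proof would close the gap and essentially reproduce the paper's argument.
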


\begin{proof}
We will prove the assertion in five steps.

\smallskip

\underline{Step 1}: \,Since $\overline{\Omega^\prime}$ is compact, it is covered by finitely many balls $\{B_{\rho}(x_j)\}_{j=1}^N\,\,(x_j \in \Omega^\prime,\,j=1,2,\ldots,N)$, where $N=N(\Omega^\prime)$, such that
\begin{equation*}
\overline{\Omega^\prime} = \bigcup_{j=1}^N \overline{B_{\rho}}(x_j), \quad  \rho <|x_i- x_{i+1}|<2\rho,\, \,\,B_{4\rho}(x_{i}) \subset \Omega,\,\,\textrm{for all}\,\,  1 \leq i \leq N,
\end{equation*}
where we put $x_{N+1}=x_1$.
For brevity we denote $B_{\rho}(x_j)$ by $B_j$ for each $j=1,2,\ldots, N$ and let $\sigma B_{j}:=B_{\sigma \rho}(x_{j})$ for $\sigma=2$ and 4.
\\[-5mm]

\begin{center}
\begin{tikzpicture}[scale=0.50]
            \draw  plot[draw=black, smooth, tension=.8] coordinates {(-4.0,0.55) (-3.3,2.8) (-1.2,4.0) (1.8,3.6) (4.4,3.8) (5.2,2.8) (5.3,0.6) (2.8,-2.5) (0,-1.0) (-3.3,-1.5) (-4.0,0.6)};
            \filldraw[pattern=north east lines, pattern color=blue, thick]  plot[smooth, tension=.8] coordinates {(-3.5,0.5) (-3,2.5) (-1,3.5) (1.5,3) (4,3.5) (5,2.5) (5,0.5) (2.5,-2) (0,-0.5) (-3,-1.0) (-3.5,0.5)};
            \draw plot[smooth, tension=.7] coordinates {(-3.5,0.5) (-3,2.5) (-1,3.5) (1.5,3) (4,3.5) (5,2.5) (5,0.5) (2.5,-2) (0,-0.5) (-3,-1.0) (-3.5,0.5)};
           \draw (1,1.5)node[below]{\small $\Omega'$};
           \draw (5.7,1.5)node[right]{\small $\Omega$};
           \draw (-1.9,2.1)node[right]{\footnotesize $4B_{m}$};
           \draw[thin] (-2.71,2.0) circle (26pt);
            \draw (-1.5,0.1)node[left]{\footnotesize $B_{j}$};
            \draw[thick] (-3.2,-0.3) circle (7pt);
            \draw[thick] (-3.25,0.1) circle (7pt);
            \draw[thick] (-3.2,0.5) circle (7pt);
            \draw[thick] (-3.18,0.9) circle (7pt);
            \draw[thick] (-3.17,1.3) circle (7pt);
             \draw[thick] (-3.05,1.7) circle (7pt);
             \draw[thick] (-2.9,2.1) circle (7pt);
            \draw[thin] (14,0) circle (80pt);
            \draw[thick] (14,0) circle (40pt);
            \draw[thick] (17,1.2) circle (40pt);
            \draw[thin] (17,1.2) circle (80pt);
            \draw (13,3.4)node[right]{\footnotesize $B_j$};
            \draw (16.5,-0.7)node[right]{\footnotesize $\frac{1}{2}B_{j+1}$};
            \draw (19.5,2.3)node[right]{\footnotesize $B_{j+1}$};
            \draw (14,0)node[below]{\footnotesize $x_j$};
            \draw (17.1,1.2)node[above]{\footnotesize $x_{j+1}$};
            \filldraw[thick] (14,0) circle (2pt);
            \filldraw[thick] (17,1.2) circle (2pt);
            \draw (0.2,-2.8)node[below] {\footnotesize  Figure:  Harnack's chain argument};
            \draw (16.2,-2.8)node[below] {\footnotesize  Figure:  Relations of two balls};
 \end{tikzpicture}
 \end{center}

By (\ref{ex. propassumption prime}), there exists at least one $B_j=B_\rho (x_j)$, denoted by  $x_1=x_j$ and $B_1=B_j$, such that
\begin{equation*}
|B_1 \cap \{u(t_0) \geq L\}| \geq \frac{\alpha}{2^n} |B_1|,
\end{equation*}
Thus, by Proposition \ref{ex. prop}, there exists positive numbers $\delta_0, \varepsilon_0 \in (0,1)$ depending only on $p, n$ and $\alpha_{0}=\alpha$ and independent of $L$ such that
\begin{equation*}
\big|B_1 \cap \{u(t) \geq \varepsilon_0L\}\big| \geq \frac{\,\alpha_{0}\,}{2^{n+1}} \big|B_1\big|.
\end{equation*}
for all $t \in [t_0,\,t_0+\delta_0L^{q+1-p}\rho^p]$. Let $Q_{4\rho}(z_1):=4B_1 \times (t_0,\,t_0+\delta_0L^{q+1-p}\rho^p) \subset \Omega^\prime_T$. By Lemma \ref{ex. crucial lemma}, for any $\nu_0 \in (0,1)$, there exists a positive number $\varepsilon_{\nu_0}$ depending only on $p,\,n,\,\alpha_{0},\,\delta,\,L,\nu_0,\,\varepsilon_0$ such that
\begin{equation}\label{def of Q(z_1)}
|Q_{4\rho}(z_1) \cap \{u<\varepsilon_{\nu_0}L\}| <\nu_0 |Q_{4\rho}(z_1)|.
\end{equation}
Here $\varepsilon_{\nu_0}:=\varepsilon_0/2^{J_0}$, where $J_0$ is determined in (\ref{def of J}) replaced $\nu$ by $\nu_0$. In particular, as noted in (\ref{choice of epsilon}), we choose $\varepsilon_0$ as $\varepsilon_0=\left(\frac{\delta_0}{2^{I_0}} \right)^\frac{1}{q+1-p}$, where $I_0$ is sufficiently large positive integer. Following the same argument as before, we next divide $Q_{4\rho}(z_1)$ along time direction into finitely many subcylinders of number $s_0:=2^{J_0+I_0}$, with each time-length $$k_{J_0}^{q+1-p}\rho^{p}=\frac{\delta_0L^{q+1-p}}{2^{J_0+I_0}}\rho^p$$ via (\ref{choice of k_j}) and put
\begin{equation*}
Q^{(\ell)}(z_1):=4B_1 \times \left(t_0+\ell\, \frac{\delta_0L^{q+1-p}}{2^{J_0+I_0}}\rho^p,\,t_0+(\ell+1)\frac{\delta_0L^{q+1-p}}{2^{J_0+I_0}}\rho^p \right)
\end{equation*}
for $\ell=0,1,\ldots, s_0-1$. Thus, by (\ref{def of Q(z_1)}), there exists a $Q^{(\ell)}(z_1)$ which holds that
\begin{equation*}
|Q^{(\ell)}(z_1) \cap \{u<\varepsilon_{\nu_0}L\}| <\nu_0 |Q^{(\ell)}(z_1)|.
\end{equation*}
Therefore it follows from Theorem \ref{Expansion of positivity} that
\begin{equation}\label{ex. positivity in Omega^prime eq1}
u \geq \eta_1L \quad \textrm{a.e.}\,\,\textrm{in}\,\,\,2B_1 \times \left(t_0+\left(\ell+\frac{1}{2}\right)\frac{\delta_0L^{q+1-p}}{2^{J_0+I_0}}\rho^p,\,t_0+(\ell+1)\frac{\delta_0L^{q+1-p}}{2^{J_0+I_0}}\rho^p \right),
\end{equation}
where $\displaystyle \eta_1:=\frac{1}{2L} k_{J_0}$.

\smallskip

\underline{Step 2}: \,  By $\rho<|x_1-x_2|<2\rho$,
\begin{equation*}
D_1:=B_1 \cap B_2 \neq \varnothing
\end{equation*}
holds.
\begin{center}
\begin{tikzpicture}[scale=0.7]
\filldraw[pattern=north east lines, pattern color=blue,thick] (0.9,-1.65) arc(-56:56:2);
\filldraw[pattern=north east lines, pattern color=blue,thick] (0.9,1.65) arc(124:236:2);
\draw[thick](-0.2,0) circle(2) (2,0) circle (2);
 \draw (0.4,0)node[right]{\small $D_1$};
\draw (-0.7,1.5)node[right]{\small $B_1$};
  \draw (2.2,1.4)node[right]{\small $B_2$};
   \draw (-0.3,0)node[below]{\small $x_1$};
  \draw (2.1,0)node[below]{\small $x_2$};
    \filldraw[thick] (-0.2,0) circle (2pt);
  \filldraw[thick] (2,0) circle (2pt);
  \draw (0.8,-2.8)node[below] {\footnotesize  Figure:  Intersection of two balls};
\end{tikzpicture}
\end{center}
Via (\ref{ex. positivity in Omega^prime eq1}), we have
\begin{equation}\label{ex. positivity in Omega^prime eq2}
u \geq \eta_1L \quad \textrm{a.e.}\,\,D_1 \times \mathcal{I}_0^{(\ell)},
\end{equation}
where let $\mathcal{I}^{(\ell)}_0:=\left(t_0+\left(\ell+\frac{1}{2}\right)\frac{\delta_0L^{q+1-p}}{2^{J_0+I_0}}\rho^p,\,t_0+(\ell+1)\frac{\delta_0L^{q+1-p}}{2^{J_0+I_0}}\rho^p \right)$. Let $t_1$ be a middle point in $\mathcal{I}^{(\ell)}_0$ and, by (\ref{ex. positivity in Omega^prime eq2}), we observe that
\begin{equation*}
|D_1 \cap \{u(t_1) \geq \eta_1 L\}|=|D_1|,
\end{equation*}
which is, setting $\displaystyle \alpha_1:=\frac{|D_1|}{|B_2|} \in (0,1)$,
\begin{equation*}
\big|B_2 \cap \{u(t_1) \geq \eta_1 L\}\big| \geq \alpha_1 \big|B_2\big|.
\end{equation*}
By the very same argument as Step 1, there exist positive numbers $\delta_1\in (0,1)$, $I_{1}, J_{1} \in \mathbb{N}$ depending only on $p,\,n$ and $\alpha_1$ and independent of $L$, and $\nu_1 \in (0,1)$ such that
\begin{equation}\label{ex. positivity in Omega^prime eq4}
u \geq \eta_2L \quad \textrm{a.e.}\,\,\textrm{in}\,\,\,B_2 \times \left(t_1+\left(k+\frac{1}{2}\right)\frac{\delta_1(\eta_{1}L)^{q+1-p}}{2^{J_1+I_1}}\rho^p,\,t_1+(k+1)\frac{\delta_1(\eta_{1}L)^{q+1-p}}{2^{J_1+I_1}}\rho^p \right),
\end{equation}
for some
$$k \in \{0,1,\ldots, 2^{I_{1}+J_{1}}-1\},$$
where $\displaystyle \eta_2:=\frac{1}{2L}k_{J_{1}}$ and $J_1$ is chosen that $\displaystyle J_1 \geq \max\left\{ \left(\frac{C}{\nu_1\alpha_1\delta_1^{\frac{1}{p}}} \right)^\frac{p}{p-1},\,J_0 \right\}$.

\smallskip

\underline{Step 3}:\, We will proceed by induction on $m$. Assume that for some $m \in \{1,2.\ldots ,N\}$
\begin{equation}\label{ex. positivity in Omega^prime eq5}
u \geq \eta_mL \quad \textrm{a.e.}\,\,\textrm{in}\,\,B_m\times \mathcal{I}^{(\ell)}_{m-1}.
\end{equation}
Here we let
\begin{equation*}
\mathcal{I}^{(\ell)}_{m-1}:=\left(t_{m-1}+\left(\ell+\frac{1}{2}\right)\frac{\delta_{m-1}(\eta_{m-1}L)^{q+1-p}}{2^{J_{m-1}+I_{m-1}}}\rho^p,\,t_{m-1}+(\ell+1)\frac{\delta_{m-1}(\eta_{m-1}L)^{q+1-p}}{2^{J_{m-1}+I_{m-1}}}\rho^p \right),
\end{equation*}
where $\delta_{m-1},\,\eta_{m-1} \in (0,1)$ and $J_{m-1}, I_{m-1} \in \mathbb{N}$ are determined inductively, and $\ell \in \left\{0,1,\ldots, 2^{I_{m-1}+J_{m-1}}-1\right\}$. By $\rho <|x_m-x_{m+1}|<2\rho$ again,
\begin{equation*}
D_m:=B_m \cap B_{m+1}\neq \varnothing
\end{equation*}
and thus, (\ref{ex. positivity in Omega^prime eq5}) yields that
\begin{equation}\label{ex. positivity in Omega^prime eq6}
u \geq \eta_mL \quad \textrm{a.e.}\,\,\textrm{in}\,\,D_m \times \mathcal{I}^{(\ell)}_{m-1}.
\end{equation}
Letting $t_m$ be a middle point of $\mathcal{I}^{(\ell)}_{m-1}$ again and, by (\ref{ex. positivity in Omega^prime eq6}), we obtain that
\begin{equation*}
\big|B_{m+1} \cap \{u(t_m) \geq \eta_mL\}\big| \geq \alpha_m\big|B_{m+1}\big|,
\end{equation*}
where we let $\displaystyle \alpha_m:=\frac{|D_m|}{|B_{m+1}|} \in (0,1)$. Again, using the very same argument as in Step 1 and 2, there exist positive numbers $\delta_m,\,\eta_m \in (0,1)$, $J_{m},\,I_{m} \in \mathbb{N}$ depending only on $p,\,n$ and $\alpha_m$ and independent of $L$ and some $\nu_m \in (0,1)$ such that
\begin{equation}\label{ex. positivity in Omega^prime eq7}
u \geq \eta_{m+1}L \quad \textrm{a.e.}\,\,\textrm{in}\,\,B_{m+1}\times \mathcal{I}^{(k)}_m,
\end{equation}
where
\begin{equation*}\mathcal{I}^{(k)}_m:=\left(t_{m}+\left(k+\frac{1}{2}\right)\,\frac{\delta_{m}(\eta_{m}L)^{q+1-p}}{2^{J_{m}+I_{m}}}\rho^p,\,t_{m}+(k+1)\frac{\delta_{m}(\eta_{m}L)^{q+1-p}}{2^{J_{m}+I_{m}}}\rho^p \right)
\end{equation*}
for some
\begin{equation*}
k \in \{0,1,\ldots,2^{I_{m}+J_{m}}-1\},
\end{equation*}
where
\begin{equation}\label{def of eta_{m}}
\eta_{m+1}=\frac{1}{2L}k_{J_{m}}=\left(\frac{\delta_{m}}{2^{I_{m}+J_{m}}} \right)^{\frac{1}{q+1-p}}\eta_{m}
\end{equation}
and $J_m$ is chosen that $\displaystyle J_m \geq \max\left\{ \left(\frac{C}{\nu_m\alpha_m\delta_m^{\frac{1}{p}}} \right)^\frac{p}{p-1},\,J_{m-1} \right\}$.\\[2mm]
\noindent
\underline{Step 4}:\, We next claim the following: \\
\quad For any $k \in \left\{0,1,\ldots,2^{I_{m}+J_{m}}-1\right\}$ and $\ell \in \left\{0,1,\ldots, 2^{I_{m-1}+J_{m-1}}-1\right\}$,
\begin{equation}\label{ex. positivity in Omega^prime eq7}
\mathcal{I}^{(\ell)}_{m-1} \supset \mathcal{I}^{(k)}_m.
\end{equation}
\begin{center}
\begin{tikzpicture}[scale=1.1]
\draw[thin, ->] (-3.5,0)--(4,0) node[right] {\small $t$}; % x-axis
\draw[thick] (0.5,0) circle (0.07);
\filldraw[thick] (0,0) circle (0.07);
\draw node[below]{\small $t_{m}$};
\draw [thick](2.5,0) circle (0.07);
\coordinate (A) at (-3,0);
    \coordinate (B) at (3,0);
    \draw (A) -- (B);
    \draw [bend left,distance=2.3cm] (A)
         to node [fill=white,inner sep=0.2pt,circle] {\footnotesize $\mathcal{I}^{(\ell)}_{m-1}$} (B);
         \draw[thick] (-3,0) circle (0.07);
\draw [thick](3,0) circle (0.07);
\filldraw[very thick](0.5, 0)--(2.5,0);
\coordinate (C) at (0.5,0);
    \coordinate (D) at (2.5,0);
    \draw (C) -- (D);
    \draw [bend right,distance=1.1cm] (C)
         to node [fill=white,inner sep=0.2pt,circle] {\footnotesize $\mathcal{I}^{(k)}_{m}$} (D);
        \draw (D)node[below]{\small $b$};
        \draw (B)node[below]{\small $a$};
        \draw (0.2,-0.8)node[below] {\footnotesize  Figure:  Relations of $\mathcal{I}^{(\ell)}_{m-1}$ and $\mathcal{I}^{(\ell)}_{m}$};
\end{tikzpicture}
\end{center}

\noindent
Since this \eqref{ex. positivity in Omega^prime eq7} is equivalent to
\begin{equation}\label{ex. positivity in Omega^prime eq8}
a=t_{m-1}+(\ell+1) \frac{\delta_{m-1}(\eta_{m-1}L)^{q+1-p}}{2^{J_{m-1}+I_{m-1}}}\rho^p>t_{m}+(k+1) \frac{\delta_{m}(\eta_{m}L)^{q+1-p}}{2^{J_{m}+I_{m}}}\rho^p=b,
\end{equation}
we will verify (\ref{ex. positivity in Omega^prime eq8}). Using $\eta_{m}=\frac{1}{2L}k_{J_{m-1}}=\left(\frac{\delta_{m-1}}{2^{I_{m-1}+J_{m-1}}} \right)^{\frac{1}{q+1-p}}\eta_{m-1}$ via (\ref{def of eta_{m}}) and
\begin{equation*}
t_m=t_{m-1}+\left(\ell+\frac{3}{4}\right)\frac{\delta_{m-1}(\eta_{m-1}L)^{q+1-p}}{2^{J_{m-1}+I_{m-1}}}\rho^p :\quad \textrm{middle point of}\,\,\mathcal{I}^{(\ell)}_{m-1},
\end{equation*}
we find that $a-b$ is estimated as follows:
\begin{align}
a-b&=\bigg[\frac{1}{4}-(k+1)\frac{\delta_{m}}{2^{J_{m}+I_{m}}} \bigg] \frac{\delta_{m-1}(\eta_{m-1}L)^{q+1-p}}{2^{J_{m-1}+I_{m-1}}}\rho^p \notag \\
&>\bigg[\frac{1}{4}-2^{J_m+I_m}\cdot \frac{\delta_{m}}{2^{J_{m}+I_{m}}} \bigg]\frac{\delta_{m-1}(\eta_{m-1}L)^{q+1-p}}{2^{J_{m-1}+I_{m-1}}}\rho^p  \notag \\
&>\bigg[\frac{1}{4}-\delta_m\bigg] \frac{\delta_{m-1}(\eta_{m-1}L)^{q+1-p}}{2^{J_{m-1}+I_{m-1}}}\rho^p \notag \\
&>0 \notag
\end{align}
since $\delta_m=\frac{\alpha_m^{p+1}}{2^{3p+3}Cn^p}<\frac{1}{4}$ by (\ref{parameter}). Thus, (\ref{ex. positivity in Omega^prime eq7}) is actually valid. \\[2mm]
\underline{Step 5}:\, By Step 3, we have, for all $m=1,2,\ldots,N$,
\begin{equation*}
u \geq \eta_{m+1}L \quad \textrm{a.e.}\,\,\textrm{in}\,\,B_{m+1}\times \mathcal{I}^{(k)}_m,
\end{equation*}
where we let $B_{N+1}:=B_1$. Since $\{\eta_m\}_{m=1}^{N+1}$ is decreasing this inequality plainly yields that, for all $m=1,2,\ldots,N$,
\begin{equation}\label{ex. positivity in Omega^prime eq9}
u \geq \eta_{N+1}L \quad \textrm{a.e.}\,\,\textrm{in}\,\,B_{m+1}\times \mathcal{I}^{(k)}_m.
\end{equation}
By (\ref{ex. positivity in Omega^prime eq7}) in Step 4, we furthermore find that
\begin{equation}\label{ex. positivity in Omega^prime eq10}
\mathcal{I}^{(\ell)}_1 \supset \cdots \supset \mathcal{I}^{(k)}_N
\end{equation}
where $\ell \in \left\{0,1,\ldots, 2^{J_0+I_0}-1\right\}$ and $k \in \left\{0,1,\ldots, 2^{J_N+I_N}-1\right\}$. From (\ref{ex. positivity in Omega^prime eq9}) and (\ref{ex. positivity in Omega^prime eq10}) it follows that
\begin{equation*}
u \geq \eta_{N+1}L\quad \textrm{a.e.}\,\,\textrm{in} \,\,\,\Omega^{\prime} \times \mathcal{I}^{(k)}_N
\end{equation*}
and thus, we arrive at the conclusion.
\end{proof}

As mentioned in Corollary \ref{key cor}, if a solution of (\ref{maineq'}) is positive almost everywhere in $\Omega'$ at some time $t_0$, its positivity expands in space-time without ''waiting time''.

\begin{cor}\label{ex. cor prime}
Let $u$ be a nonnegative weak supersolution of (\ref{maineq'}). Let $\Omega^\prime$ be a subdomain contained compactly in $\Omega$.  Suppose that $u(t_0)>0$ almost everywhere in $\Omega^{\prime}$ for some  $t_0 \in [0,T)$. Then there exist positive numbers $\eta_0$ and  $\tau_0$ such that
\begin{equation*}
u \geq \eta_0 \quad \textrm{a.e.}\quad  \textrm{in}\quad  \Omega^\prime \times (t_0, t_0+\tau_0).
\end{equation*}
\end{cor}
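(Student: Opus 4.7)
My plan is to adapt the Harnack-chain covering argument used in the proof of Theorem~\ref{ex. thm prime}, but replacing each application of Theorem~\ref{Expansion of positivity} by the no-waiting-time Corollary~\ref{key cor}. Since here the hypothesis is much stronger---a.e.\ positivity throughout $\Omega'$ at time $t_0$, rather than merely on a set of positive measure---every ball of the chain already satisfies the hypothesis of Corollary~\ref{key cor} at the initial time $t_0$, so no waiting time is incurred along the chain and the positivity propagates immediately in a common time slab.

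\textbf{Covering.} By compactness of $\overline{\Omega'}$ in $\Omega$, I would first choose $\rho>0$ small enough that $B_{4\rho}(x)\subset\Omega$ for every $x\in\overline{\Omega'}$ and that $u(t_0)>0$ a.e.\ on each blown-up ball $4B_j$ used below. Then cover $\overline{\Omega'}$ by a finite chain of balls $B_j:=B_\rho(x_j)$, $j=1,\ldots,N$, with $\rho<|x_j-x_{j+1}|<2\rho$ and $\overline{\Omega'}\subset\bigcup_{j=1}^N B_j$, exactly as in Step~1 of the proof of Theorem~\ref{ex. thm prime}. For each $j$, apply Corollary~\ref{key cor} directly to $B_j$ at initial time $t_0$: since $u(t_0)>0$ a.e.\ on $4B_j$, we obtain positive numbers $\eta_j$ and $\tau_j$ (depending on $p,n,\rho$ and on $\essinf_{4B_j}u(t_0)$) with
\[
u\geq \eta_j \quad \text{a.e.\ in } 2B_j\times(t_0,t_0+\tau_j).
\]

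\textbf{Assembling the conclusion.} Setting $\eta_0:=\min_{1\leq j\leq N}\eta_j>0$ and $\tau_0:=\min_{1\leq j\leq N}\tau_j>0$---both strictly positive since the chain is finite---one has
\[
u\geq \eta_0 \quad \text{a.e.\ in } \bigcup_{j=1}^{N}2B_j\times(t_0,t_0+\tau_0)\ \supseteq\ \Omega'\times(t_0,t_0+\tau_0),
\]
which is the claim. Alternatively, if one prefers a literal chain argument, one may apply Corollary~\ref{key cor} first to $B_1$ only, obtaining positivity on $2B_1\times(t_0,t_0+\tau_1)$ and hence on the overlap $D_1=B_1\cap B_2$ for every $t\in(t_0,t_0+\tau_1)$; combined with the original a.e.\ positivity $u(t_0)>0$ on $B_2$, Corollary~\ref{key cor} is re-applied at $B_2$ with initial time $t_0$, and iterating through $B_3,\ldots,B_N$ gives the same conclusion.

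\textbf{Main obstacle.} The only delicate point is the geometric bookkeeping needed to guarantee that Corollary~\ref{key cor} is truly applicable at every ball in the covering, i.e.\ that $u(t_0)>0$ a.e.\ on $4B_j$ for every $j$, including balls near $\partial\Omega'$ whose fourfold enlargement may protrude outside $\Omega'$. This is handled either by shrinking $\rho$ so that all the blown-up balls $4B_j$ remain within a slightly larger region on which a.e.\ positivity is available, or by first applying Corollary~\ref{key cor} on interior balls (with $4B_j\subset\Omega'$) and using the chain to transport positivity to boundary-adjacent balls before invoking Corollary~\ref{key cor} there. Once this is arranged, the rest of the argument is routine: finitely many applications of Corollary~\ref{key cor} and taking minima, without any waiting time.
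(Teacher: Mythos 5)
Your core argument---cover $\overline{\Omega'}$ by finitely many balls, apply Corollary~\ref{key cor} at time $t_0$ on each ball, and take the minimum of the finitely many $\eta_j,\tau_j$---is essentially the paper's strategy. The only structural difference is that the paper applies Corollary~\ref{key cor} to the first ball only and then passes to the remaining balls by the Harnack chain argument of Theorem~\ref{ex. thm prime}, producing decreasing sequences $\eta_1>\cdots>\eta_N$ and $\tau_1>\cdots>\tau_N$, whereas you apply it to every ball in parallel. Under the premise that $u(t_0)>0$ a.e.\ on each enlarged ball---a premise the paper itself simply asserts (``by assumption, $u>0$ a.e.\ in each ball $2B_j$'', even though the covering only guarantees $2B_j\subset\Omega$, not $2B_j\subset\Omega'$)---your parallel version is correct and, if anything, cleaner: with that premise there is no need for the chain and no waiting times arise.

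The weak point is exactly the obstacle you flag, and neither of your remedies resolves it. Remedy (a) cannot work: since the balls must cover $\overline{\Omega'}$, some centers lie within distance $\rho$ of $\partial\Omega'$, so the enlarged balls protrude outside $\Omega'$ for every choice of $\rho$, and the hypothesis supplies positivity only on $\Omega'$---there is no ``slightly larger region on which a.e.\ positivity is available''. Remedy (b) is circular: Corollary~\ref{key cor} with initial time $t_0$ at a boundary-adjacent ball requires $u(t_0)>0$ a.e.\ on its enlargement, i.e.\ information at time $t_0$ itself, which transporting positivity along the chain cannot supply, since the chain yields positivity only for $t>t_0$ (and, if run through Theorem~\ref{Expansion of positivity}, only after a waiting time); the ``literal chain'' variant in your assembling step suffers from the same circularity. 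To be fair, the paper's proof glosses over the identical point, and in the only application (Proposition~\ref{Interior positivity by the volume constraint}, where the initial positivity holds a.e.\ in all of $\Omega$) the enlarged balls do carry a.e.\ positivity, so your proposal is on par with the paper's argument; but as a self-contained proof of the corollary exactly as stated, the treatment of the boundary-adjacent balls is the step that remains unjustified in your write-up.
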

\begin{proof}
Since $\overline{\Omega^\prime}$ is compact, it is covered by finitely many balls $\{B_{\rho}(x_j)\}_{j=1}^N\,\,(x_j \in \Omega^\prime,\,j=1,2,\ldots,N)$, where $N=N(\Omega^\prime)$, such that
\begin{equation*}
\overline{\Omega^\prime} = \bigcup_{j=1}^N \overline{B_{\rho}}(x_j), \quad  \rho <|x_i- x_{i+1}|<2\rho,\, \,\,B_{2\rho}(x_{i}) \subset \Omega,\,\,\textrm{for all}\,\,  1 \leq i \leq N,
\end{equation*}
where we put $x_{N+1}=x_1$.
For brevity we denote $B_{\rho}(x_j)$ by $B_j$ for each $j=1,2,\ldots, N$ and let $2B_{j}:=B_{2\rho}(x_{j})$. By assumption, $u>0$ almost everywhere in each ball $2B_{j}$, $j=1,\ldots, N$. Corollary \ref{key cor} yield that there exist positive numbers $\eta_{1}$ and $\tau_{1}$ such that
\begin{equation*}
u \geq \eta_{1}\quad \textrm{a.e.}\,\,\textrm{in}\,\,B_{1}\times (t_{0},t_{0}+\tau_{1}).
\end{equation*}
From the Harnack chain argument as in the proof of Theorem \ref{ex. thm prime} it follows that there exists positive numbers $\tau_2<\tau_1,\,\eta_2 <\eta_1$ such that
\begin{equation*}
u \geq \eta_{2}\quad \textrm{a.e.}\,\,\textrm{in}\,\,B_{2}\times (t_{0},t_{0}+\tau_{2}).
\end{equation*}
Iterative this argument finitely, there exist $\eta_1>\eta_2>\cdots>\eta_{N}$ and $\tau_1>\tau_2>\cdots>\tau_{N}$ such that
\begin{equation*}
u \geq \eta_{N}\quad \textrm{a.e.}\,\,\textrm{in}\,\,B_{j}\times (t_{0},t_{0}+\tau_{N}).
\end{equation*}
for all $j=1,\ldots, N$. Thus, letting $\eta_0:=\eta_N$ and $\tau_0:=\tau_N$, we complete the proof.
\end{proof}

%%%%%%%%%%%%%%Positivity near the boundary%%%%%%%%%%%%%%%%
 \subsection{Positivity near the boundary}\label{Subsec. Positivity near the boundary}

We next study the positivity of the solution to the doubly nonlinear equations of $p$-Sobolev flow type (\ref{maineq'}) near the boundary. In what follows, assume that the bounded domain $\Omega$ satisfies the \textit{interior ball condition}, that is, for every boundary point $\xi \in \partial \Omega$, there exist a point $x_0 \in \Omega$ and some $\rho>0$ such that
\begin{equation*}
\overline{B_\rho(x_0)} \cap \partial \Omega=\{\xi\},
\end{equation*}
where $\overline{B_\rho(x_0)}$ denotes the closure of $B_\rho(x_0)$.

\begin{prop}[Positivity of the solution near the boundary]\label{Positivity near the boundary}
Assume $u_0>0$ in $\Omega$. Then every nonnegative supersolution $u$ to
(\ref{maineq'}) is positive near the boundary.
\end{prop}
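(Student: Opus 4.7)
The strategy is to combine the interior positivity from Corollary \ref{ex. cor prime} with a comparison argument using an explicit barrier subsolution inside each interior ball tangent to $\partial\Omega$. By Corollary \ref{ex. cor prime}, since $u_0>0$ in $\Omega$, for each subdomain $\Omega'\Subset\Omega$ there exist $\eta_0,\tau_0>0$ with $u\ge\eta_0$ a.e.\ in $\Omega'\times(0,\tau_0)$. Fix $\xi\in\partial\Omega$; by the interior ball condition, choose $x_0\in\Omega$ and $\rho>0$ with $\overline{B_\rho(x_0)}\cap\partial\Omega=\{\xi\}$. It suffices to produce a uniform positive lower bound for $u$ on $B_\rho(x_0)$ at some positive time, for then a one-sided neighborhood of $\xi$ in $\Omega$ is covered; the smoothness and compactness of $\partial\Omega$ make the construction uniform in $\xi$, so that finitely many such balls together cover a full strip $\{\operatorname{dist}(\cdot,\partial\Omega)<\delta\}$.

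The plan is to exhibit a weak subsolution $v$ of \eqref{maineq'} on the cylinder $B_\rho(x_0)\times(t_*,t_*+\tau_*)$ for suitable $t_*\in(0,\tau_0)$ and $\tau_*>0$, with $v=0$ in the trace sense on the lateral boundary $\partial B_\rho(x_0)\times(t_*,t_*+\tau_*)$, with $v(\cdot,t_*)\le u(\cdot,t_*)$ on $B_\rho(x_0)$, and with $v>0$ strictly inside. Since $cu^q\ge 0$ and $u\ge 0$, $u$ is simultaneously a supersolution of the pure doubly nonlinear equation $\partial_tu^q-\Delta_pu=0$, so it suffices to build $v$ as a subsolution of this simpler equation. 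A natural candidate is a radial ansatz $v(x,t)=\varepsilon\,\chi(t)\,\Phi(|x-x_0|)$, with $\Phi>0$ on $[0,\rho)$, $\Phi(\rho)=0$, and $\chi\in C^1$ chosen so that the time powers in $\partial_tv^q$ and $\Delta_pv$ match (this forces the self-similar exponent via $q>p-1$). The subsolution inequality then reduces to an ordinary differential inequality of $p$-Lane--Emden type in $\Phi$; classical explicit profiles such as $\Phi(r)=\rho^\beta-r^\beta$ with an appropriate $\beta>0$ serve as candidates. The smallness parameter $\varepsilon$ is tuned so that this differential inequality holds, and so that $v(\cdot,t_*)\le u(\cdot,t_*)$ pointwise in $B_\rho(x_0)$, using the interior bound $u(\cdot,t_*)\ge\eta_0$ on compactly contained subsets of $B_\rho(x_0)$ together with the vanishing of $\Phi$ near $\partial B_\rho(x_0)$. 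On the lateral boundary, the trace condition (D3$'$) of Theorem \ref{Comparison theorem} is satisfied because $v$ has zero trace while $u\ge 0$.

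Applying Theorem \ref{Comparison theorem} on $B_\rho(x_0)\times(t_*,t_*+\tau_*)$ then yields $u\ge v>0$ strictly inside $B_\rho(x_0)$, hence $u>0$ in a one-sided neighborhood of $\xi$, and finite covering of $\partial\Omega$ completes the proof. The main obstacle is the explicit barrier construction: the doubly nonlinear, degenerate/singular nature of $\partial_tv^q-\Delta_pv$ makes it delicate to exhibit a radial profile that is simultaneously a genuine weak subsolution, vanishes on $\partial B_\rho(x_0)$, is strictly positive inside, and is initially dominated by $u$ even up to the tangent point $\xi$ where both functions degenerate. The interplay between the time factor $\chi$, the $p$-Laplacian behavior of $\Phi$ at the center $r=0$ (where $\nabla\Phi=0$) and near the boundary $r=\rho$ (where $\Phi$ vanishes), and the matching exponent $1/(q-p+1)$ requires a careful choice of the parameters $\beta,\varepsilon,\chi$.
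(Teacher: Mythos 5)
Your plan has two concrete gaps, and the first is fatal for the specific barrier you propose. (a) The separable ansatz $v=\varepsilon\,\chi(t)\,\Phi(|x-x_0|)$ with $\Phi(r)=\rho^\beta-r^\beta$ cannot be made a subsolution on the full ball: a direct computation gives $\Delta_p\Phi=-\beta^{p-1}\bigl[(\beta-1)(p-1)+n-1\bigr]\,r^{(\beta-1)(p-1)-1}<0$, which stays bounded away from zero as $r\to\rho$, while after dividing the inequality $\partial_t v^q\le\Delta_p v$ by $(\varepsilon\chi)^{p-1}$ the requirement becomes $q\,\varepsilon^{\,q+1-p}\chi^{q-p}|\chi'|\,\Phi^q\ \ge\ |\Delta_p\Phi|$, whose left-hand side tends to $0$ as $\Phi\to0$ near the lateral boundary. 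Hence the inequality fails in a neighborhood of $\partial B_\rho(x_0)$ for every choice of $\beta,\varepsilon,\chi$; note also that an exact separable profile would have to solve the critical Lane--Emden problem ($q=p^\ast-1$) on a ball, which has no positive solution by Pohozaev (as the introduction of the paper recalls), so at best one could try profiles flattening at the sphere, e.g.\ of order $(\rho-r)^\gamma$ with $\gamma>1$. (b) Even with such a profile, your initial comparison $v(\cdot,t_*)\le u(\cdot,t_*)$ on the \emph{whole} ball is not justified: Corollary \ref{ex. cor prime} gives $u\ge\eta_0$ only on subdomains compactly contained in $\Omega$, whereas $v(\cdot,t_*)$ is strictly positive at every interior point of $B_\rho(x_0)$, in particular at points arbitrarily close to the tangency point $\xi\in\partial\Omega$, where no positive lower bound or decay rate for $u(\cdot,t_*)$ is known --- that is exactly the information the proposition is meant to produce. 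The phrase ``vanishing of $\Phi$ near $\partial B_\rho(x_0)$'' does not close this, since $\Phi>0$ inside and you would need to know how fast $u(\cdot,t_*)$ may vanish at $\xi$.

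The paper avoids both difficulties by a different geometry: it compares on the annulus $A=B_\rho(x_0)\setminus\overline{B_{\rho'}(x_0)}$ with the \emph{stationary} Hopf-type barrier $v(x)=e^{-\alpha r^2}-e^{-\alpha\rho^2}$. Away from the center one can choose $\alpha$ large so that $\Delta_p v\ge0$, and since $\partial_t v^q=0$ the function $mv$ is trivially a subsolution of (\ref{maineq'}) --- no time factor, no Lane--Emden matching. The outer sphere carries zero boundary data (where $u\ge0$ suffices), the inner sphere $\partial B_{\rho'}(x_0)\times[0,T]$ carries the constant $m$ supplied by interior positivity for \emph{all} times, and the only remaining slice is $A\times\{0\}$, which is handled with the given datum $u_0>0$ by enlarging $\alpha$, rather than with the unknown $u(\cdot,t_*)$. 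Then Theorem \ref{Comparison theorem} yields $u\ge mv>0$ in $A\times(0,T)$. If you wish to rescue your route, you essentially must reintroduce such an inner boundary (or another device prescribing the positive datum on a set compactly contained in $\Omega$); as written, the proposal does not close.
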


\begin{proof}
We will follow the similar idea as \cite{Suzuki-Ueoka}.  Since, $\Omega$ satisfies the interior ball condition, we have, for every boundary point $\xi \in \partial \Omega$,
\begin{equation*}
\overline{B_\rho(x_0)} \cap \partial \Omega=\{\xi\}.
\end{equation*}
Take $\rho' \in (0,\rho)$ and define the annulus
\begin{equation*}
A:=B_\rho(x_0)\, \backslash \,\overline{B_{\rho'}(x_0)} .
\end{equation*}
\begin{center}
\begin{tikzpicture}[samples=70, rotate=-30, thick,scale=1.1]
\draw [very thick,domain=-2.2:2.2,] plot(\x, {exp(-0.33 * \x * \x)}); % exp
\draw (1.5,0.7)node[above] {\small $\partial \Omega$};
\draw[->,thin] (0,0)-- (0,2);
\draw [pattern=north east lines, pattern color=blue,thick](0,0) circle (0.97);
\draw[pattern=north east lines, pattern color=white,thick](0,0) circle (0.49);
\filldraw (0,1) circle (0.05);
\draw (-0.2,1) node[above]{\small $\xi$};
\filldraw (0,0) circle (0.04);
\draw (0,0) node[below]{\footnotesize $x_{0}$};
\draw (0.3,-0.3) node[below]{\footnotesize $A$};
\draw (0,2) node[right]{\footnotesize$r=|x-x_{0}|$};
\draw (0.6,-1.3)node[below] {\footnotesize  Figure:  Interior ball condition};
\end{tikzpicture}
\qquad \qquad
\begin{tikzpicture}[scale=1.2]
\draw [->,thin](-0.2,0)--(3,0);%r axis
\draw [->, thin] (0,-0.5)--(0,1.6);%y axis
\draw [thick,domain=0:2.5,] plot(\x, {exp(-0.5* \x * \x)-exp(-0.5* 2 * 2)}); % exp
\draw (2,0.8) node[above]{\footnotesize$v(x)=e^{-\alpha r^2}-e^{-\alpha\rho^2}$};
\draw (0,-0.2) node[left]{\footnotesize $x_{0}$};
\filldraw (2,0) circle (0.05);
\draw (2,0) node[below]{\footnotesize $\xi$};
\filldraw (1,0) circle (0.05);
\draw [pattern=north east lines, pattern color=blue,thin] (1,-0.05) rectangle (2,0.05);
\coordinate (A) at (1,0);
    \coordinate (B) at (2,0) ;
    \draw (A) -- (B);
    \draw [bend right,distance=0.6cm, thin] (A)
         to node [fill=white,inner sep=0.2pt,circle] {\footnotesize $A$} (B);
 \draw (3,0) node[below]{\footnotesize$r$};
 \draw (0.8,-0.6)node[below] {\footnotesize  Figure:  Graph of $v(x)$};
  \end{tikzpicture}
\end{center}

We define a function $v$ for $(x, t) \in A \times [0, T]$ as
\begin{equation}\label{positivity bdry eq0}
v(x,t)=v(x):=e^{-\alpha r^2}-e^{-\alpha\rho^2},
\end{equation}
where $r:=|x-x_0|<\rho$ and $\alpha>0$ is to be determined later. Since
\begin{equation*}
\begin{cases}
v_{x_j}=-2\alpha(x_j-x_{0,j})e^{-\alpha r^2}, \quad &j=1,\ldots, n\\
v_{x_ix_j}=-2\alpha \delta_{ij}e^{-\alpha r^2}+4\alpha^2 (x_i-x_{0,i})(x_j-x_{0,j})e^{-\alpha r^2}, \quad & i,j=1,\ldots, n,
\end{cases}
\end{equation*}
we have
\begin{align}
\Delta_pv&=(p-2)|\nabla v|^{p-4}e^{-3\alpha r^2}8 \alpha^3 r^2 (- 1 + 2 \alpha r) \notag \\
&\qquad \qquad \qquad +|\nabla v|^{p-2}e^{-\alpha r^2} 2 \alpha (- n + 2 \alpha r^2)
\end{align}
and thus, we can choose a sufficiently large $\alpha$ so that
\begin{equation*}\Delta_p v(x,t) \geq 0\quad \mathrm{in} \,\,A \times [0,T],
\end{equation*}
where $\alpha$ is chosen depending on $\rho$ and $\rho^\prime$.
Therefore, by $\partial_t v^q=0$
\begin{equation*}
\partial_t v^q-\Delta_p v \leq 0 \quad \textrm{in}\quad  A \times [0,T].
\end{equation*}
Now, let $m:=\min\left\{\min \limits_\Omega u_0,\,\min \limits_{\partial B_{\rho^\prime} (x_0)
\times [0, T]} u \right\}$. We will show that $mv (x,t)$ is a lower comparison function for the solution. We note that the solution $u$ is uniformly (H\"older) continuous in $\Omega_T =\Omega \times (0,T)$ (see Section \ref{Holder regularity} below), again, we can choose $\alpha> 0$ to be so large that,
on the initial boundary $A \times \{t=0\}$,
\begin{equation}\label{positivity bdry eq1}
u(x,0)=u_0 (x) \geq mv (x, 0)
=m (e^{-\alpha r^2} - e^{-\alpha \rho^2}).
\end{equation}
Also,
\begin{equation}\label{positivity bdry eq2}
u(x,t) \geq mv(x,t)\quad \mathrm{on}\,\,\partial B_\rho(x_0) \times (0,T),
\end{equation}
 since, on $\partial B_\rho (x_0) \times [0,T]$, $v=0$ by definition and $u\geq 0$ by Corollary \ref{key cor}. On $\partial B_{\rho^\prime} (x_0) \times [0,T]$, by the very definition of $m$,
\begin{equation}\label{positivity bdry eq3}
u(x,t) \geq m \big(e^{-\alpha \rho'^2}-e^{-\alpha\rho^2}\big)=mv(x,t)\quad \mathrm{on} \,\,\partial B_{\rho'}(x_0)\times (0,T)
\end{equation}
From (\ref{positivity bdry eq1}), (\ref{positivity bdry eq2}) and (\ref{positivity bdry eq3}), we find that
\begin{equation*}
u \geq mv \quad \textrm{on}\,\,\partial_pA_T,
\end{equation*}
where $A_T:=A\times (0,T)$ and $\partial_p A_T$ is the parabolic boundary of $A_T$ and thus, we have that $mv(x,t)$ is lower comparison function for $u$ in $A_T=A \times (0,T)$. By Theorem \ref{Comparison theorem}, we arrive at
\begin{equation}\label{positivity bdry eq4}
u \geq mv>0\quad \textrm{in}\,\, A_T.
\end{equation}
Thus the assertion is actually verified.
 \end{proof}

\section{The $p$-Sobolev flow}\label{Sect. The p-Sobolev flow}
\subsection{Positivity of the $p$-Sobolev flow}

In what follows, we consider the $p$-Sobolev flow (\ref{pS}). We first notice the nonnegativity of a solution of the $p$-Sobolev flow and its proof.
%
%\begin{equation}\label{maineq' with volume constraint}
%\begin{cases}
%\,\partial_tu^q-\Delta_pu=cu^q\quad &\textrm{in}\quad \Omega_T  \\
%\,u=0\quad &\textrm{on}\quad \partial\Omega \times (0,T) \\
%\,u(\cdot, 0)=u_0(\cdot ) \quad &\textrm{in}\quad \Omega \\[1mm]
%\,\displaystyle \int_\Omega u^{q+1}(t)\,dx=1\quad &\textrm{for all}\,\, t \geq 0
%\end{cases}
%\end{equation}
%
%where the initial data $u_0$ is assumed to satisfy the same condition as fourth line in (\ref{maineq' with volume constraint}). We call the condition in the fourth line in (\ref{maineq' with volume constraint}) as the \textit{volume constraint}. This problem (\ref{maineq' with volume constraint}) with $p=2$ is relevant to the classical Yamabe flow (see \cite{Hamilton89}, \cite{Schwetlick-Struwe}). We call (\ref{maineq' with volume constraint}) the \textit{p-Yamabe flow}.

%%%%%%%%%%%%%%%%%%%%%%%%%%%%%%%%%%%%%%%%%%%%%%%%%%%%%%%%%%%%%%%%%%%%%%%%%%%%
%%%%This is new part we add%%%%%

\begin{prop}[Nonnegativity of the $p$-Sobolev flow]\label{Nonnegativity prop pS}
Suppose $u_{0} \geq 0$ in $\Omega$. Then, a weak solution $u$ of (\ref{pS}) satisfies
\begin{equation}\label{nonnegativity pS}
u \geq 0 \quad \textrm{in}\quad \Omega_T.
\end{equation}
\end{prop}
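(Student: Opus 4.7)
The proof is a close adaptation of that of Proposition~\ref{Nonnegativity prop}, with the constant $c$ there replaced by the Lagrange multiplier $\lambda(t)$, which by Definition~\ref{def of weak sol.}(D2) lies only in $L^1(0,T)$. Accordingly, the final Gronwall argument must be carried out in its integral-coefficient form, but nothing else in the original proof needs structural change.

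First, I would fix $0 < t_1 < t \leq T$, introduce the Lipschitz time cutoff $\sigma_{t_1,t}$ from the proof of Proposition~\ref{Nonnegativity prop}, and test the weak formulation in (D2) of Definition~\ref{def of weak sol.} with $(-u)_+ \sigma_{t_1,t}$; this is admissible because (D4) guarantees $u(\tau) \in W_0^{1,p}(\Omega)$ for almost every $\tau$. Setting $v := -u$ and using identity~\eqref{eq. of A^{+}} on the time-derivative term exactly as in~\eqref{eq2 of nonnegativity}--\eqref{eq2' of nonnegativity}, discarding the nonnegative gradient contribution $\int_{\Omega_{t_1,t}} |\nabla(-u)_+|^p \sigma_{t_1,t}\,dz$, and simplifying the right-hand side by means of the identity $|u|^{q-1}u \cdot (-u)_+ = -(-u)_+^{q+1}$, I arrive, after letting $\delta \searrow 0$ and then $t_1 \searrow 0$, at
$$\frac{q}{q+1}\int_\Omega (-u(t))_+^{q+1}\,dx \;\leq\; \int_0^t \lambda(\tau)\int_\Omega (-u(\tau))_+^{q+1}\,dx\,d\tau.$$
The vanishing of the boundary contribution at $t_1$ follows from the initial trace condition $\|u(t_1)-u_0\|_{L^{q+1}(\Omega)}\to 0$ together with $u_0 \geq 0$, since the positive part is $1$-Lipschitz.

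To close the argument I would then apply the integral Gronwall--Bellman inequality with coefficient $\lambda \in L^1(0,T)$ to $\phi(t) := \int_\Omega (-u(t))_+^{q+1}\,dx$, concluding $\phi \equiv 0$ and hence $u \geq 0$ in $\Omega_T$. The only point requiring care is precisely this step: because $\lambda(t)$ is not assumed bounded, one cannot invoke the classical Gronwall lemma, but the $L^1$-coefficient version suffices and introduces no new technicalities beyond what is already developed in Section~\ref{Sec. Doubly nonlinear equations of p-Sobolev flow type}.
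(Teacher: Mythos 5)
Your proposal is correct and follows essentially the same route as the paper: test the weak formulation with $(-u)_+\sigma_{t_1,t}$, use the identity \eqref{eq. of A^{+}} for the time term, drop the nonnegative gradient term, use the initial trace condition (D4) to kill the contribution at $t_1$, and conclude by the integral Gronwall inequality with the $L^1$ coefficient (the paper writes the right-hand side with $(\lambda(\tau))_+$, which is the trivial adjustment making the coefficient nonnegative, exactly the point you flag). No substantive difference.
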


\begin{proof}
Let  $0 < t_1 < t \leq T$ be arbitrarily taken and fixed.
Let $\sigma_{t_{1},t}$ be the same Lipschitz cut-off function on time as in the proof of Proposition \ref{Nonnegativity prop}. The function $-(- u)_+ \sigma_{t_1, t}$ is an admissible test function in (D2) of Definition \ref{def of weak sol.}, since $\partial_t (|u|^{q - 1} u) \in L^2 (\Omega_T)$ by (D1) of Defintion \ref{def of weak sol.} and,
$-(- u)_+ \sigma_{t_1, t}$ is in $L^{q+1} (\Omega \times (t_1, t))$. Thus, we have
\begin{align}\label{eq1 of nonnegativity pS}
\int_{\Omega_{t_1, t}}\partial_{\tau}(|u|^{q-1}(-u))(-u)_{+}\sigma_{t_1, t}\,dz&+\int_{\Omega_{t_1, t}}|\nabla u|^{p-2}\nabla (-u)\cdot \nabla\left((-u)_{+}\sigma_{t_1, t} \right)\,dz \notag \\
&= \int_{\Omega_{t_1, t}}\lambda(\tau)|u|^{q-1}(-u)(-u)_{+}\sigma_{t_1, t}\,dz.
\end{align}
Applying the very same argument as in the proof of Proposition \ref{Nonnegativity prop} to \eqref{eq1 of nonnegativity pS}, we obtain that
\[
\frac{q}{q+1}\int_{\Omega}(-u(t))_{+}^{q+1}\,dx \leq \int_0^t(\lambda(\tau))_{+}\int_{\Omega}(-u(\tau))_+^{q+1}\,dxd\tau.
\]
From the Gronwall lemma it follows that
\[
 \int_{\Omega}(-u(t))_{+}^{q+1}\,dx \leq 0
\]
since (D4) in Definition \ref{def of weak sol.}, $(-u(t))_+ \to 0$ in $L^{q+1} (\Omega)$ as $t \searrow 0$. Therefore we have $-u(x,t) \leq 0$ \,for $(x,t) \in \Omega_{T}$ and the claim is actually verified.
\end{proof}

%%%%%%%%%%%%%%%%%%%%%%%%%%%%%%%%%%%%%%5
We now state the fundamental energy estimate.

\begin{prop}[Energy equality]\label{Energy equality pS}
Let $u$ be a nonnegative solution to (\ref{pS}). Then the following identities are valid:
\begin{enumerate}[(i)]
\item
\begin{equation*}
\lambda(t)=\int_{\Omega}|\nabla u(x,t)|^{p}\,dx,\quad t \in [0,T] ;
\end{equation*}
\item
\begin{equation*}
q\int_{\Omega_{0,t}}u^{q-1}(\partial_t u)^2\,dz+\frac{1}{p}\lambda(t)=\frac{1}{p}\lambda(0),\quad t \in [0,T].
\end{equation*}
In particular,
\begin{equation}\label{lambdaineq}
\lambda(t) \leq \lambda(0),\quad t \in [0,T].
\end{equation}
\end{enumerate}
%
%
%\begin{enumerate}[(i)]
%\item $\displaystyle \int_\Omega |u(t)|^{q+1}\,dx \leq \Big(\int_\Omega |u_0|%^{q+1}\,dx \Big)e^{c\frac{q+1}{q}T},\quad 0 \leq t \leq T.$
%\item $\displaystyle \int_\Omega \tfrac{\,1\,}{p}|\nabla u(t)|^p\,dx +q%\int_{\Omega_{0,t}}|u|^{q-1}(\partial_t u)^2\,dz \leq  C\Big(\int_\Omega \tfrac{\,1\,}{p}|\nabla u_0|^p\,dx \Big)e^{CT},\quad 0 \leq t \leq T$
%for a suitable positive constant $C=C(n,p,\Omega, c,\|u_{0}\|_{L^{q+1}(\Omega)})$.
%\end{enumerate}
%
\end{prop}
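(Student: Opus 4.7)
The plan is to derive both identities by testing the $p$-Sobolev flow equation against $u$ for (i) and against $\partial_t u$ for (ii), with the volume constraint playing the decisive role in eliminating the right-hand side term. Throughout, since $u\ge 0$ by Proposition \ref{Nonnegativity prop pS}, the equation reads $\partial_t u^q - \Delta_p u = \lambda(t)\, u^q$ in the weak sense.

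For (i), multiplying the equation by $u$, integrating over $\Omega$, and integrating by parts in the $p$-Laplace term formally gives
\[
\int_\Omega u\,\partial_t u^q\,dx + \int_\Omega |\nabla u|^p\,dx = \lambda(t)\int_\Omega u^{q+1}\,dx = \lambda(t),
\]
where the last equality uses the volume constraint (D3) of Definition~\ref{def of weak sol.}. The key algebraic identity is $u\,\partial_t u^q = q u^q \partial_t u = \tfrac{q}{q+1}\partial_t u^{q+1}$; differentiating the volume constraint $\int_\Omega u^{q+1}\,dx\equiv 1$ in time then forces $\int_\Omega u\,\partial_t u^q\,dx = 0$, whence $\lambda(t) = \int_\Omega |\nabla u|^p\,dx$. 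For (ii), testing the equation with $\partial_t u$ yields, after using $\partial_t u^q = q u^{q-1}\partial_t u$ and the identity $\int_\Omega |\nabla u|^{p-2}\nabla u\cdot\nabla\partial_t u\,dx = \tfrac{1}{p}\tfrac{d}{dt}\int_\Omega|\nabla u|^p\,dx$,
\[
q\int_\Omega u^{q-1}(\partial_t u)^2\,dx + \tfrac{1}{p}\tfrac{d}{dt}\int_\Omega|\nabla u|^p\,dx = \lambda(t)\int_\Omega u^q \partial_t u\,dx,
\]
and the right-hand side vanishes by the same differentiation of the volume constraint. Using (i) to rewrite $\int_\Omega|\nabla u|^p\,dx = \lambda(t)$ and integrating in time from $0$ to $t$ produces the identity in (ii); the inequality $\lambda(t)\le\lambda(0)$ follows at once since the dissipation integral is nonnegative.

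The main obstacle is the rigorous justification of these test function choices, since neither $u$ nor $\partial_t u$ is an admissible test function in the sense of Definition~\ref{def of weak sol.}. I would overcome this by a Steklov time-averaging procedure: replace $u$ by its Steklov average $u_h$ in time, test the averaged equation with suitable compactly supported functions built from $u_h$ or $\partial_t u_h$, and then let $h\searrow 0$. The positivity of $u$ (Corollary~\ref{ex. cor prime} and Proposition~\ref{Positivity near the boundary}) together with its boundedness (Proposition~\ref{Boundedness}) ensure that $u$ is bounded above and below by positive constants on every compact time slab, so the regularity $\partial_t u^q\in L^2(\Omega_T)$ from (D1) translates into $\partial_t u \in L^2$ on such slabs via $\partial_t u^q = q u^{q-1}\partial_t u$, which is precisely what is needed to pass to the limit and validate the formal computations above. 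The fully rigorous execution of this Steklov argument is the content of Appendix~\ref{Sec. Proof energy equality}.
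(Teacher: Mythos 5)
Your overall strategy is exactly the paper's: part (i) is obtained by testing with (a time-localized version of) $u$ and using the volume constraint, and part (ii) by testing with $\partial_t u$, using $\int_\Omega|\nabla u|^{p-2}\nabla u\cdot\nabla\partial_t u\,dx=\tfrac1p\tfrac{d}{dt}\int_\Omega|\nabla u|^p\,dx$ and the vanishing of the right-hand side via the constraint, so there is no divergence in approach. The problem lies in how you propose to make the formal computation rigorous.

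Your key justification --- that positivity and boundedness make $u$ ``bounded above and below by positive constants on every compact time slab,'' so that $\partial_t u^q\in L^2(\Omega_T)$ converts into $\partial_t u\in L^2$ via $\partial_t u=\tfrac1q u^{1-q}\partial_t u^q$ --- does not work. The positivity results you cite (Proposition \ref{Interior positivity by the volume constraint}, Corollary \ref{ex. cor prime}, Proposition \ref{Positivity near the boundary}) give a positive lower bound only on subdomains $\Omega'\Subset\Omega$; since $q>1$ and $u$ satisfies the homogeneous Dirichlet condition, $u$ tends to $0$ at $\partial\Omega$ and the factor $u^{1-q}$ is unbounded there, so this division argument does not yield $\partial_t u\in L^2(\Omega_T)$, nor does it justify $\sigma_{t_1,t}\partial_t u$ as an admissible test function on all of $\Omega$. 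The paper closes exactly this gap differently: Lemma \ref{existence of u_t} proves $\partial_t u\in L^2(\Omega_T)$ by a truncation at level $a>0$ (working on $\{u\ge a\}$, where $h(v)=v^{1/q}$ is Lipschitz, and then letting $a\searrow0$), with no global lower bound needed, and Lemma \ref{regularization} (a space--time mollification statement, playing the role of your Steklov averaging) justifies the identity for the $p$-energy term. Two smaller points: your step ``differentiate the volume constraint to get $\int_\Omega u\,\partial_t u^q\,dx=0$'' needs justification of the chain rule/absolute continuity of $t\mapsto\int_\Omega u^{q+1}\,dx$ --- the paper avoids this in (i) by using the constancy of $\int_\Omega u^{q+1}\,dx$ at the two endpoint times directly; and to get the identity in (i) for \emph{every} $t$ (not just a.e.) the paper divides the integrated identity by $t-t_1$ and passes to the limit, a step your sketch omits. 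Finally, deferring ``the fully rigorous execution'' to Appendix \ref{Sec. Proof energy equality} is not available to you in a self-contained proof: those lemmata are precisely the content that must be supplied.
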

The proof of this proposition is postponed, and will be given in Appendix \ref{Sec. Proof energy equality}.

\smallskip

%Next, we derive the energy equality for a weak solution to (\ref{pS}). Firstly, we show the existence of $\partial_t u$ in $L^2(\Omega_T)$.
%

%
%The proof of this lemma is presented in Appendix \ref{Sec. Some fundamental facts}.

\begin{prop}[Boundedness of the $p$-Sobolev flow]\label{Boundedness of the p-Sobolev flow}
Let $u \geq 0$ be a weak solution of the $p$-Sobolev flow equation (\ref{pS}). Then $u$ is bounded from above in $\Omega_T$ and
\begin{equation*} % \label{e.}
\| (u(t))_+ \|_{L^\infty(\Omega)} \leq e^{\lambda(0) T/q}\|u_{0}\|_{L^{\infty}(\Omega)}.
\end{equation*}

\end{prop}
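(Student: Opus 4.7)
The plan is to reduce the $p$-Sobolev flow equation to the doubly nonlinear equation \eqref{maineq'} with an explicit constant $c$ so that the already-proved boundedness estimate in Proposition \ref{Boundedness} applies directly. The Lagrange multiplier $\lambda(t)$ is a priori only in $L^1(0,T)$, so the crucial input is the monotonicity $\lambda(t)\le\lambda(0)$ furnished by the Energy equality (Proposition \ref{Energy equality pS}).

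First, I would observe that by Proposition \ref{Nonnegativity prop pS} the solution $u$ is nonnegative in $\Omega_T$, so in the weak formulation \eqref{pS} we may replace $|u|^{q-1}u$ by $u^q$. Since $u \ge 0$ and $\lambda(t)\le\lambda(0)$ by \eqref{lambdaineq}, the right-hand side satisfies
\[
\lambda(t)\,u^{q}\;\le\;\lambda(0)\,u^{q}\qquad \text{a.e.\ in }\Omega_T.
\]
Testing the weak formulation in (D2) of Definition \ref{def of weak sol.} against any nonnegative $\varphi\in C^\infty_0(\Omega_T)$, this pointwise inequality yields
\[
-\int_{\Omega_T}u^{q}\varphi_t\,dz+\int_{\Omega_T}|\nabla u|^{p-2}\nabla u\cdot\nabla\varphi\,dz\;\le\;\lambda(0)\int_{\Omega_T}u^{q}\varphi\,dz.
\]
In other words, $u$ is a weak subsolution of \eqref{maineq'} with the constant $c=\lambda(0)$. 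Moreover, the boundary condition (D4) of Definition \ref{def of weak sol.} gives $u(t)\in W^{1,p}_0(\Omega)$ for a.e.\ $t$, so in particular $(u(t))_+ = u(t)\in W^{1,p}_0(\Omega)$, which is exactly the hypothesis of Proposition \ref{Boundedness}.

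Setting $M:=\|u_0\|_{L^\infty(\Omega)}$ (finite by assumption on the initial data), an application of Proposition \ref{Boundedness} with $c=\lambda(0)$ yields
\[
\|(u(t))_+\|_{L^\infty(\Omega)}\;\le\;e^{\lambda(0)T/q}\,\|u_0\|_{L^\infty(\Omega)},
\]
which is the claimed bound.

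The only conceptual step requiring care is justifying that $u$ is truly a subsolution of \eqref{maineq'} with constant coefficient $\lambda(0)$ rather than with the time-dependent $\lambda(t)$; this is immediate once nonnegativity of $u$ is combined with \eqref{lambdaineq}, both of which are already at our disposal. Everything else is a direct citation of the previously established boundedness result, so there is no substantial obstacle beyond invoking Proposition \ref{Energy equality pS} to replace the integrable multiplier $\lambda(t)$ by its initial value $\lambda(0)$.
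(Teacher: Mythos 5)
Your proposal is correct and follows essentially the same route as the paper: invoke \eqref{lambdaineq} to get $\lambda(t)\le\lambda(0)$, conclude that $u$ is a weak subsolution of \eqref{maineq} with $c=\lambda(0)$ and $M=\|u_0\|_{L^\infty(\Omega)}$, and then apply Proposition \ref{Boundedness}. The extra details you supply (nonnegativity to write $|u|^{q-1}u=u^q$, and the check that $(u(t))_+=u(t)\in W^{1,p}_0(\Omega)$ via (D4)) are exactly the implicit steps behind the paper's shorter argument.
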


\begin{proof} By~\eqref{lambdaineq} we have that $\lambda(t) \leq \lambda(0)$. Therefore, $u$ is a weak subsolution of~\eqref{maineq} with $M=\|u_{0}\|_{L^{\infty}(\Omega)}$ and $c = \lambda(0)$. The result then follows by Proposition~\ref{Boundedness}.
\end{proof}

In general, the solution to (\ref{maineq'}) may vanish at a finite time, however, under the volume constraint as in (\ref{pS}), the solution may positively expand in all of times (see Corollary \ref{ex. cor prime}). This is actually the assertion of the following proposition.

\begin{prop}[Interior positivity by the volume constraint]\label{Interior positivity by the volume constraint}
Let $\Omega'$ be a subdomain compactly contained in $\Omega$
and very close to $\Omega$.  Let $T$ be any positive number and assume that $u_0$ is continuous and $u_0>0$ in $\Omega$. Let $u$ be a nonnegative weak solution of (\ref{pS}). Then there exists a positive constant $\bar{\eta}$ such that %
\[
u(x,t)\geq \bar{\eta} \quad \textrm{in}\quad \Omega^{\prime} \times [0, T].
\]

\end{prop}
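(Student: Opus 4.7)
The plan is to combine the volume constraint with the Harnack-chain expansion of positivity (Theorem~\ref{ex. thm prime} and Corollary~\ref{ex. cor prime}) to produce a uniform lower bound on $u$ over $\Omega' \times [0,T]$, with all constants independent of the particular time at which the expansion is invoked.

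First I derive a uniform-in-time measure estimate. By Proposition~\ref{Boundedness of the p-Sobolev flow}, $u \leq M := e^{\lambda(0)T/q}\|u_0\|_{L^\infty(\Omega)}$ on $\Omega_T$. Combining this with the volume constraint $\|u(t)\|_{L^{q+1}(\Omega)} = 1$, a Chebyshev-type argument
\begin{equation*}
1 = \int_\Omega u^{q+1}(x,t)\,dx \leq M^{q+1}\bigl|\{u(t) \geq \epsilon\}\bigr| + \epsilon^{q+1}|\Omega|,
\end{equation*}
with the choice $\epsilon_0 := (2|\Omega|)^{-1/(q+1)}$, yields the uniform bound $|\Omega \cap \{u(t) \geq \epsilon_0\}| \geq \mu_0 := (2M^{q+1})^{-1}$ for every $t \in [0,T]$. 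Reading the hypothesis that $\Omega'$ is very close to $\Omega$ as $|\Omega \setminus \Omega'| \leq \mu_0/2$, I obtain $|\Omega' \cap \{u(t) \geq \epsilon_0\}| \geq \alpha\,|\Omega'|$ with $\alpha := \mu_0/(2|\Omega'|)$, uniformly in $t \in [0,T]$.

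With this density estimate in hand, Theorem~\ref{ex. thm prime} applied at any $t_0 \in [0,T]$ with the universal threshold $L = \epsilon_0$ and the universal proportion $\alpha$ produces a positivity interval $I(t_0) \subset [t_0, t_0 + T^*]$ of length $|I(t_0)| \geq \tau^{**}$ on which $u \geq \eta^* := \eta_{N+1}\epsilon_0$ a.e.\ in $\Omega'$, where the constants $T^*, \tau^{**}, \eta^* > 0$ depend only on $n,p,\Omega,\Omega',M$ and are independent of $t_0$. Corollary~\ref{ex. cor prime}, applied at a point of this interval with the universal lower bound $\eta^*$, then extends positivity forward by a further universal amount $\tau^{***} > 0$ with a universal lower bound $\eta^{**} > 0$. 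The initial segment is handled from the initial data: continuity and positivity of $u_0$ on $\overline{\Omega'}$ give $\inf_{\Omega'} u_0 \geq L_0 > 0$, and Corollary~\ref{ex. cor prime} then yields $u \geq \bar\eta_0$ on $\Omega' \times [0,\tau_0']$ for universal $\bar\eta_0, \tau_0' > 0$.

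A finite iteration then covers $[0,T]$: alternating between Theorem~\ref{ex. thm prime} (to recover the universal lower bound $\eta^*$ from the measure estimate) and Corollary~\ref{ex. cor prime} (to bridge short time gaps by forward propagation) along a sequence of times with spacing controlled by $\tau^{**}$ and $\tau^{***}$, finitely many steps suffice to cover the compact interval. Taking $\bar\eta := \min\{\bar\eta_0,\eta^*,\eta^{**}\} > 0$ gives the desired uniform lower bound. The main obstacle I anticipate is the bookkeeping for matching consecutive positivity intervals so that no gap is left uncovered; the crucial feature making this possible is that, thanks to the volume constraint, every application of Theorem~\ref{ex. thm prime} uses the \emph{same} universal input $L = \epsilon_0$, so the output constants $\eta^*$ and $\tau^{**}$ do not degrade through iterations, as they would if we were forced to iterate Corollary~\ref{ex. cor prime} alone (where $\tau_0 \propto L^{q+1-p}$ shrinks with each restart).
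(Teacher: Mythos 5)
Your proposal is essentially the paper's own argument: the same Chebyshev-type estimate combining the volume constraint with the bound $M=e^{\lambda(0)T/q}\|u_0\|_{L^\infty(\Omega)}$ to get a time-independent density $|\Omega'\cap\{u(t)\geq L\}|\geq\alpha|\Omega'|$, then Theorem~\ref{ex. thm prime} applied at every $t\in[0,T]$ with the same universal $L$ and $\alpha$, Corollary~\ref{ex. cor prime} for the initial layer coming from $u_0>0$, and $\bar\eta$ taken as a minimum of the resulting universal constants. The only difference is the bookkeeping for covering $[0,T]$ in time: you glue forward by alternating the theorem with Corollary~\ref{ex. cor prime}, whereas the paper invokes the theorem at each $t$ and arranges $\mathcal{I}^{(k)}_N(0)\subset(0,\tau)$; both treatments of this gluing are at the same level of detail, so this is a presentational rather than substantive difference.
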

\begin{proof}[Proof of Proposition \ref{Interior positivity by the volume constraint}]
By the volume constraint together with Proposition~\ref{Boundedness of the p-Sobolev flow}, letting $M:=e^{\lambda(0)T/q}\|u_0\|_{L^\infty(\Omega)}$, we have, for a positive number $L<M$ and any $t \in [0,T]$
\begin{align}
1=\int_\Omega u^{q+1}(t)\,dx&=\int_{\Omega' \cap \{u(t) \geq L\} }u^{q+1}(t)\,dx+\int_{\Omega' \cap \{u(t) < L\} } u^{q+1}(t)\,dx+\int_{\Omega\,\backslash \Omega'} u^{q+1}(t)\,dx \notag \\
&\leq M^{q+1}\big|\Omega' \cap \{u(t) \geq L\} \big|+L^{q+1}|\Omega'|+M^{q+1}|\Omega\,\backslash \Omega'|\,;\, \notag
\end{align}
i.e.,
\[
\frac{1-L^{q+1}|\Omega'|-M^{q+1}|\Omega\,\backslash \Omega'|}{M^{q+1}} \leq \big|\Omega' \cap \{u(t) \geq L\} \big|.
\]
Choose $\Omega^{\prime}$ such that $|\Omega\,\backslash \Omega'| \leq \frac{1}{4M^{q+1}}$ and $L>0$ satisfying $L^{q+1}|\Omega'|<\frac{1}{4}$. Under such choice of $\Omega^\prime$ and $L$, we find that, for any $t \in (0,T]$,
\begin{equation}\label{positivity with volume constraint eq1}
\alpha|\Omega'| \leq \big|\Omega' \cap \{u(t) \geq L\} \big|,
\end{equation}
where $\displaystyle \alpha:=\frac{1}{2 M^{q+1}|\Omega'|}$. Using \eqref{lambdaineq}, for a nonnegative weak solution $u$ of \eqref{pS}, we see that $u$ is a weak supersolution to \eqref{maineq'} with $c=\lambda(T)$.  Thus, from Proposition \ref{ex. thm prime}, there exist positive integer $N=N(\Omega^\prime)$ and positive number families  $\{\delta_m\}_{m=0}^N,\,\{\eta_m\}_{m=1}^{N+1}\subset (0,1)$, $\{J_m\}_{m=0}^N,\,\{I_m\}_{m=0}^N \subset \mathbb{N}$ depending on $p, n, \alpha$ and independent of $L$, a time $t_N>t$ such that, for any $t \in [0,T]$,
\[
u \geq \eta_{N+1} L\quad \textrm{a.e.}\,\,\textrm{in}\,\,\Omega^{\prime}\times \mathcal{I}^{(k)}_N(t)
\]
where $\mathcal{I}^{(k)}_N(t):=\left(t_N+\left(k+\frac{1}{2}\right)\frac{\delta_N(\eta_NL)^{q+1-p}}{2^{J_N+I_N}}\rho^p,\,\,t_N+(k+1)\frac{\delta_N(\eta_NL)^{q+1-p}}{2^{J_N+I_N}}\rho^p \right)$ for some $k \in \left\{0,1,\ldots, 2^{J_N+I_N}-1\right\}$, and $t_{N}$ is written as
\[
t_{N}=t+\sum \limits_{m=1}^{N}\left(\ell+\frac{3}{4}\right)\frac{\delta_{m-1}(\eta_{m-1}L)^{q+1-p}}{2^{J_{m-1}+I_{m-1}}}\rho^p
\]
for some $\ell \in \{0,1,\ldots,2^{J_{m-1}+I_{m-1}}-1\}$. On the other hand, from $u_0>0$ in $\Omega$ and  Corollary \ref{ex. cor prime}, there exist positive number $\eta$ and $\tau$ such that
\[
u \geq \eta \quad \textrm{a.e.}\,\,\textrm{in}\quad \Omega^\prime \times (0,\tau).
\]
Here we can choose that $\mathcal{I}^{(k)}_N(0) \subset (0,\tau)$ from the proof of Proposition \ref{ex. thm prime} and Corollary \ref{ex. cor prime}. Since $t \in [0, T]$ is any nonnegative time, letting $\bar{\eta}:=\min\{\eta,\eta_{N+1}L\}$, we have that
\[
u(x,t)>\bar{\eta}  \quad \textrm{a.e.}\,\,\textrm{in}\quad \Omega^{\prime } \times [0,T],
\]
which is our assertion of Proposition \ref{Interior positivity by the volume constraint}.
\end{proof}

\begin{prop}[Positivity around the boundary for $p$-Sobolev flow]\label{positivity around the boundary for p-Sobolev flow}
Suppose that $u_0>0$ in $\Omega$. Let $u$ be any nonnegative weak solution $u$ to (\ref{pS}). Then $u$ is positive near the boundary.
\end{prop}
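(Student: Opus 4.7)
The plan is to recycle the barrier construction from Proposition \ref{Positivity near the boundary} essentially verbatim, the only new ingredient being that the positivity input on the inner sphere of the barrier annulus, which in that earlier proposition was borrowed from Corollary \ref{key cor}, is now supplied by Proposition \ref{Interior positivity by the volume constraint}. The crucial observation is that the energy equality Proposition \ref{Energy equality pS} yields $\lambda(t)=\int_\Omega|\nabla u|^p\geq 0$, so that any nonnegative weak solution $u$ of \eqref{pS} satisfies $\partial_t u^q-\Delta_p u=\lambda(t)u^q\geq 0$; that is, $u$ is a weak supersolution of (\ref{maineq'}) with $c=0$, which is precisely the setting in which Theorem \ref{Comparison theorem} applies.

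Concretely, I would fix an arbitrary $\xi\in\partial\Omega$, invoke the interior ball condition to pick $x_0\in\Omega$ and $\rho>0$ with $\overline{B_\rho(x_0)}\cap\partial\Omega=\{\xi\}$, pick $\rho'\in(0,\rho)$, and form the annulus $A=B_\rho(x_0)\setminus\overline{B_{\rho'}(x_0)}$. I would then introduce the Hopf-type barrier $v(x)=e^{-\alpha r^2}-e^{-\alpha\rho^2}$ with $r=|x-x_0|$, and choose $\alpha>0$ large (depending only on $p,n,\rho,\rho'$) so that $\Delta_p v\geq 0$ in $A$, exactly as in the computation in the proof of Proposition \ref{Positivity near the boundary}. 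Since $\partial_t v^q\equiv 0$, the function $mv$ is a subsolution of (\ref{maineq'}) with $c=0$ for every $m\geq 0$. Next, I would choose a subdomain $\Omega'\Subset\Omega$, compactly containing $\overline{B_{\rho'}(x_0)}$ and sufficiently close to $\Omega$ in the sense of Proposition \ref{Interior positivity by the volume constraint}, which then produces a constant $\bar\eta>0$ with $u\geq\bar\eta$ on $\Omega'\times[0,T]$; in particular $u\geq\bar\eta$ on $\partial B_{\rho'}(x_0)\times[0,T]$. Set $m:=\min\{\bar\eta,\min_{\overline{A}}u_0\}$, which is positive by the continuity and strict positivity of $u_0$ in the interior together with the positivity just obtained.

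To apply the comparison theorem on $A\times(0,T)$, I would verify $u\geq mv$ on the parabolic boundary $\partial_p A_T$: on $\partial B_\rho(x_0)\times[0,T]$ one has $v=0\leq u$ by Proposition \ref{Nonnegativity prop pS}; on $\partial B_{\rho'}(x_0)\times[0,T]$ one has $mv\leq m\leq\bar\eta\leq u$; and on $\overline{A}\times\{0\}$ one needs $u_0(x)\geq mv(x,0)$. Then Theorem \ref{Comparison theorem} gives $u\geq mv>0$ in $A\times(0,T)$, and since $\xi\in\partial\Omega$ was arbitrary, $u$ is positive in a neighborhood of $\partial\Omega$ in $\Omega\times(0,T)$.

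The main technical obstacle is the last boundary inequality, namely $u_0\geq mv$ on $\overline{A}$ at $t=0$, because $v(\xi)=0$ and (as $u_0\in W^{1,p}_0(\Omega)$) also $u_0(\xi)=0$, so both sides vanish precisely at the boundary contact point. The remedy, implicit already in Proposition \ref{Positivity near the boundary}, is to exploit the fact that when $\alpha$ is large, $v(\,\cdot\,,0)$ decays extremely rapidly as $r\nearrow\rho$: expanding in $s=\rho-r$ gives $v\approx 2\alpha\rho\,e^{-\alpha\rho^2}s$ near $\partial B_\rho$, and the factor $\alpha e^{-\alpha\rho^2}\to 0$ as $\alpha\to\infty$. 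Combined with continuity of $u_0$ in the interior and a lower bound for $u_0$ away from $\xi$ coming from the strict positivity hypothesis, one can choose $\alpha$ large enough (depending on $u_0$ and on $m$) to secure $u_0\geq mv$ uniformly on $\overline{A}$, which closes the proof.
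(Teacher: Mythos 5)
Your proposal is correct and follows essentially the same route as the paper: the paper's own proof of this proposition merely observes that, thanks to Proposition \ref{Energy equality pS} (so that $\lambda(t)\geq 0$ and $\lambda(t)\leq\lambda(0)$), a nonnegative weak solution of \eqref{pS} is a nonnegative weak supersolution of (\ref{maineq'}), and then invokes the annulus--barrier--comparison construction of Proposition \ref{Positivity near the boundary}, which is exactly what you reproduce, with the positivity on the inner sphere $\partial B_{\rho'}(x_0)\times[0,T]$ supplied, as you do, by Proposition \ref{Interior positivity by the volume constraint} and the comparison closed by Theorem \ref{Comparison theorem}. The only slip is internal to your choice $m=\min\{\bar\eta,\min_{\overline{A}}u_0\}$: since $\xi\in\overline{A}$ and, as you yourself note, $u_0(\xi)=0$, this minimum may vanish, so $m$ should instead be built (as in the paper's Proposition \ref{Positivity near the boundary}) from the inner-sphere bound and the values of $u_0$ away from the contact point, the degeneracy at $\xi$ in the initial-time inequality being absorbed, exactly as both you and the paper do, into the choice of a sufficiently large $\alpha$.
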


\begin{proof}
Since a nonegative weak solution of (\ref{pS}) is a nonnegative weak supersolution of (\ref{maineq'}) in $\Omega_T$, for $T>0$ with $c = \lambda(T)$, we can apply the proof of Proposition \ref{Positivity near the boundary}. Thus the proof is complete.
\end{proof}

%%%%%%%%%%%%%%%%%%%%%%%%%%%%%%%%%%%%%%%%%%%
\subsection{H\"older and gradient H\"{o}lder continuity}\label{Holder regularity}
%%%%%%%%%%%%%%%%%%%%%%%%%%%%%%%%%%%%%%%%%%%%
In this section, we will prove the H\"older and gradient H\"{o}lder continuity of the solution to $p$-Sobolev flow (\ref{pS}) with respect to space-time variable.
\medskip

Suppose $u_0 >0$ in $\Omega$. Then by Propositions \ref{Interior positivity by the volume constraint} and \ref{Boundedness of the p-Sobolev flow}, for any $\Omega^\prime$ compactly contained in $\Omega$ and $T \in (0,\infty)$,
we can choose a positive constant $\tilde{c}$ such that
\begin{equation}\label{bounded from above and below1}
0<\tilde{c} \leq u \leq M=:e^{\lambda(0)T/q}\|u_{0}\|_{L^{\infty}(\Omega)} \quad \textrm{in}\quad  \Omega^\prime \times [0, T].
\end{equation}
%
%For a finite time vanishing of solution for (\ref{maineq'}) see Proposition \ref{Finite time vanishing of solutions to p-Yamabe flow} below.%
Under such positivity of a solution in the domain as in (\ref{bounded from above and below1}), we can rewrite the first equation of (\ref{pS}) as follows : Set $v:=u^q$, which is equivalent to $u=v^\frac{1}{q}$ and put $g:=\frac{1}{q} v^{1/q-1}$ and then, we find that the first equation of (\ref{pS}) is equivalent to
\begin{equation}\label{eq. of v}
\partial_tv-\mathrm{div} \big(|\nabla v|^{p-2}g^{p-1}\nabla v \big)=\lambda(t)v \quad \textrm{in}\quad \Omega^\prime \times [0,T]
\end{equation}
and thus, $v$ is a positive and bounded weak solution of the evolutionary $p$-Laplacian  equation (\ref{eq. of v}). By (\ref{bounded from above and below1}) $g$ is uniformly elliptic and bounded in $\Omega'_\infty$. Then we have a local energy inequality for a local weak solution $v$ to (\ref{eq. of v}) in Appendix \ref{subsect. energy of v} (see \cite{DiBenedetto1}). \\

The following H\"{o}lder continuity is proved via using the local energy inequality, Lemma \ref{energy of v} in Appendix \ref{subsect. energy of v} and standard iterative real analysis methods. See  \cite[Chapter III]{DiBenedetto1} or \cite[Section 4.4, pp.44--47]{Urbano} for more details.

\begin{thm}[H\"{o}lder continuity]\label{Holder continuity}
Let $v$ be a positive and bounded weak solution to (\ref{eq. of v}). Then $v$ is H\"{o}lder continuous in $\Omega^\prime_T$ with a H\"older exponent $\beta \in (0, 1)$ on a space-time metric $|x|+|t|^{1/p}$ for any $T \in (0,\infty)$.
\end{thm}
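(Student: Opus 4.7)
The plan is to invoke DiBenedetto's intrinsic scaling machinery for degenerate evolutionary $p$-Laplacian equations, following the exposition in \cite[Chapter III]{DiBenedetto1} and \cite[Section 4.4]{Urbano}. The key preliminary observation is that the bounds \eqref{bounded from above and below1} together with Proposition \ref{Boundedness of the p-Sobolev flow} imply that $g = \frac{1}{q}v^{1/q-1}$ satisfies $0 < g_1 \le g \le g_2 < \infty$ in $\Omega'_T$, so the principal part $|\nabla v|^{p-2}g^{p-1}\nabla v$ enjoys the standard ellipticity and $p$-growth structure with universal constants depending only on $n, p, \tilde{c}, M$. Moreover, by Proposition \ref{Energy equality pS} we have $|\lambda(t)| \le \lambda(0)$, and $v$ is bounded, so the lower-order term $\lambda(t)v$ is essentially a bounded forcing.

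Fix $z_0 = (x_0, t_0) \in \Omega'_T$ and a small $R>0$ with $Q(R, R) := B_R(x_0) \times (t_0 - R^p, t_0) \subset \Omega'_T$. Let $\mu^\pm$ denote the essential supremum and infimum of $v$ over $Q(R, R)$ and set $\omega := \mu^+ - \mu^-$. Because $p \ge 2$, I would introduce the intrinsic cylinder
\[
Q^*(R) := B_R(x_0) \times (t_0 - c_0\omega^{2-p}R^p, t_0),
\]
choosing $c_0$ small enough (and $\omega$ controlled, which can be arranged by working in a sufficiently small reference cylinder) so that $Q^*(R) \subset Q(R,R)$. Inside $Q^*(R)$ the equation behaves like a quasi-standard $p$-parabolic equation when measured in the intrinsic time scale.

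The heart of the proof is the familiar two-alternative analysis of DiBenedetto. Based on the local energy inequality of Lemma \ref{energy of v} applied to the truncations $(v-k)_\pm$ with levels $k = \mu^\pm \mp \omega/2^j$: in the \emph{first alternative}, where the set $\{v > \mu^+ - \omega/2\}$ occupies only a small portion of $Q^*(R)$, a De Giorgi iteration using Lemma \ref{Fast geometric convergence} yields $v \le \mu^+ - \omega/4$ on a sub-cylinder; in the \emph{second alternative}, logarithmic test functions propagate information forward in time to produce a measure-theoretic distribution of $v$ away from $\mu^-$ on a time slice, and then a symmetric De Giorgi iteration on $(v-k)_-$ gives $v \ge \mu^- + \omega/4$ on a sub-cylinder. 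In either case one obtains the decay of oscillation estimate
\[
\essosc_{Q^*(\eta R)} v \le (1 - \sigma)\,\omega
\]
for universal constants $\eta, \sigma \in (0,1)$. Iterating over a geometric sequence $R_k = \eta^k R$ produces $\omega_k \le C\,R_k^\beta$ for some $\beta \in (0,1)$ depending only on $n,p,\tilde{c},M,\lambda(0)$; unwinding the intrinsic time scaling (using that $\omega$ is bounded) converts this into H\"older continuity with respect to the standard parabolic metric $|x-x_0| + |t-t_0|^{1/p}$.

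The main obstacle is the careful bookkeeping between the Euclidean and intrinsic scales, in particular ensuring that the nested intrinsic cylinders remain inside $\Omega'_T$ as the oscillation decreases, and verifying that the lower-order term $\lambda(t)v$ contributes only a harmless perturbation to the Caccioppoli inequality. The latter is handled by observing that the extra terms are of the form $\lambda(0) M^{q} (v-k)_\pm$, and by Young's inequality these are absorbed into the principal gradient term at the cost of a small constant for $R$ small. All of these details are standard and can be extracted from \cite[Chapter III]{DiBenedetto1} and \cite[Section 4.4]{Urbano}; the only novelty here is verifying that our equation \eqref{eq. of v} fits into that framework, which is immediate from \eqref{bounded from above and below1} and \eqref{lambdaineq}.
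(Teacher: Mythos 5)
Your proposal is correct and follows essentially the same route as the paper: the paper proves Theorem \ref{Holder continuity} by combining the local energy inequality of Lemma \ref{energy of v} with the standard intrinsic-scaling iteration of \cite[Chapter III]{DiBenedetto1} and \cite[Section 4.4]{Urbano}, after noting that the bounds \eqref{bounded from above and below1} and \eqref{lambdaineq} make $g$ uniformly elliptic and bounded and the lower-order term a harmless bounded perturbation. Your sketch simply spells out the two-alternative oscillation-decay argument that those references carry out, so there is no substantive difference.
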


%
%You had better put a Holder estimate around boundary.
%By the comparison principle
%we have a gradient upper and lower boundedness, from which we obtain
%a Holder estimate of solution around boundary, by use of the Poincare inequality for (1.1)
%You try to compute the Poincare inequality.
%
%

By a positivity and boundedness as in (\ref{bounded from above and below1}) and a H\"older continuity in Theorem \ref{Holder continuity}, we see that the coefficient $g^{p-1}$ is H\"older continuous and thus, obtain a H\"older continuity of its spacial gradient.

\begin{thm}[Gradient interior H\"{o}lder continuity]\label{Gradient Holder continuity}
Let $v$ be a positive and bounded weak solution to (\ref{eq. of v}). Then, there exist a positive exponent $\alpha<1$ depending only on $n,p,\beta$ and a positive constant $C$ depending only on $n, p, \tilde{c}, M, \lambda(0), \beta, \|\nabla v\|_{L^p(\Omega'_T)},  [g]_{\beta, \Omega'_T}$ and $[v]_{\beta, \Omega'_T}$ such that $\nabla v$ is H\"{o}lder continuous in $\Omega'_T$ with an exponent $\alpha$ on the usual parabolic distance. Furthermore, its H\"{o}lder constant is bounded above by $C$, where $[f]_\beta$ denote the H\"older semi-norm of a H\"older continuous function $f$ with a H\"older exponent~$\beta$.
\end{thm}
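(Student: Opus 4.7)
The plan is to view equation~\eqref{eq. of v} as a perturbation of the homogeneous, constant-coefficient evolutionary $p$-Laplacian and combine DiBenedetto--Friedman's interior gradient Hölder estimate for the $p$-parabolic equation with a Campanato-type iteration. By Theorem~\ref{Holder continuity} and the bounds \eqref{bounded from above and below1}, the coefficient $a(x,t):=g^{p-1}(x,t)=q^{-(p-1)}v^{(1-q)(p-1)/q}(x,t)$ is uniformly positive and bounded on $\Omega'_T$ and Hölder continuous with exponent $\beta$; its Hölder seminorm $[a]_{\beta,\Omega'_T}$ is controlled by $\tilde c$, $M$, $p$, $q$, and $[v]_{\beta,\Omega'_T}$. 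Thus $v$ satisfies
\[
\partial_t v - \mathrm{div}\bigl(a(x,t)\,|\nabla v|^{p-2}\nabla v\bigr) = \lambda(t) v
\]
with $\|\lambda v\|_{L^\infty(\Omega'_T)} \le \lambda(0) M^q$, placing us in the setting of the $p$-parabolic equation with Hölder continuous coefficient and bounded source.

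Fix $z_0=(x_0,t_0)\in\Omega'_T$ and, in DiBenedetto's intrinsic spirit, work on a cylinder $Q_r^\theta(z_0):=B_r(x_0)\times(t_0-\theta r^p,t_0)$ with $\theta$ chosen so that the intrinsic relation $|\nabla v|\approx \theta^{1/(2-p)}$ holds on $Q_r^\theta(z_0)$. Let $w$ solve on $Q_r^\theta(z_0)$ the frozen homogeneous equation
\[
\partial_t w - \mathrm{div}\bigl(a(z_0)\,|\nabla w|^{p-2}\nabla w\bigr) = 0,
\]
with $w=v$ on the parabolic boundary. The classical interior gradient Hölder estimate for the evolutionary $p$-Laplacian from~\cite{DiBenedetto1} supplies an exponent $\alpha_0=\alpha_0(n,p)\in(0,1)$ and a constant $C_0=C_0(n,p)$ such that, on intrinsic cylinders,
\[
\dashint_{Q_\rho^\theta(z_0)}|\nabla w-(\nabla w)_{z_0,\rho}|^p\,dz \;\le\; C_0(\rho/r)^{p\alpha_0}\dashint_{Q_r^\theta(z_0)}|\nabla w|^p\,dz
\]
for all $0<\rho\le r$.

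Using $v-w$ as a test function in the difference of the equations for $v$ and $w$, invoking the monotonicity inequality~\eqref{BL2pre} together with the pointwise bound $|a(x,t)-a(z_0)|\le [a]_\beta\,r^\beta$ on $Q_r^\theta(z_0)$ and Young's inequality, one produces the perturbation estimate
\[
\dashint_{Q_r^\theta(z_0)}|\nabla(v-w)|^p\,dz \;\le\; C\,r^{p\beta}\dashint_{Q_r^\theta(z_0)}|\nabla v|^p\,dz + C\,r^p,
\]
the last term absorbing the bounded source $\lambda(t) v$. Combining this with the decay for $w$ via the triangle inequality yields the hybrid excess bound
\[
\dashint_{Q_\rho^\theta(z_0)}|\nabla v-(\nabla v)_{z_0,\rho}|^p\,dz \;\le\; C(\rho/r)^{p\alpha_0}\dashint_{Q_r^\theta(z_0)}|\nabla v|^p\,dz + C\,r^{p\min\{\beta,1\}},
\]
and a standard Campanato iteration lemma then delivers, for some $\alpha=\alpha(n,p,\beta)\in(0,\min\{\alpha_0,\beta\})$, the decay $\dashint_{Q_\rho^\theta(z_0)}|\nabla v-(\nabla v)_{z_0,\rho}|^p\,dz \le C\rho^{p\alpha}$. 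Campanato's embedding finally gives $\nabla v\in C^\alpha(\Omega'_T)$ on the parabolic metric.

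The principal obstacle is the degeneracy of the $p$-Laplacian at $\{\nabla v = 0\}$: the decay estimate for $w$ is only available on intrinsic cylinders with $\theta\sim|(\nabla v)_{z_0,r}|^{2-p}$. One must therefore split into a degenerate and a non-degenerate alternative according to whether $|(\nabla v)_{z_0,r}|$ dominates the oscillation of $\nabla v$ on $Q_r^\theta$, and verify that for $r$ sufficiently small the intrinsic and Euclidean cylinders are comparable, so that the iteration and Campanato embedding transfer to the usual parabolic metric. The dependence of the final Hölder constant on $\tilde c$, $M$, $\lambda(0)$, $\beta$, $\|\nabla v\|_{L^p(\Omega'_T)}$, $[g]_{\beta,\Omega'_T}$, and $[v]_{\beta,\Omega'_T}$ emerges naturally from tracking these bounds through the comparison step, the frozen-coefficient estimate, and the dichotomy.
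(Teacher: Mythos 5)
Your proposal is correct and takes essentially the same route as the paper: the paper's proof simply invokes the gradient H\"older estimates for evolutionary $p$-Laplacian equations with H\"older continuous coefficients and bounded lower order terms from \cite{Misawa,Karim-Misawa} and \cite{DiBenedetto1}, which are established by precisely the frozen-coefficient comparison, intrinsic-cylinder dichotomy and Campanato-type iteration that you sketch. The only difference is that the paper delegates all of these details to the cited references, whereas you outline the perturbation and iteration steps explicitly.
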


The outline of proof of Theorem \ref{Gradient Holder continuity} is presented in Appendix \ref{Sec. Notes on Holder regularity}.

\medskip
By an elementary algebraic estimate and a interior positivity, boundedness
and a H\"older regularity of $v$ and its gradient $\nabla v$ in Theorems \ref{Holder continuity} and \ref{Gradient Holder continuity},
we also have a H\"{o}lder regularity of the solution $u$ and its gradient $\nabla u$.

\begin{thm} [H\"{o}lder and Gradient H\"{o}lder continuity of solutions to the $p$-Sobolev flow]
Let $u$ be a positive and bounded weak solution to the $p$-Sobolev flow (\ref{pS}). Then, there exist a positive exponent $\gamma<1$ depending only on $n,p,\beta, \alpha$ and a positive constant $C$ depending only on $n, p, \tilde{c}, M, \lambda(0), \beta, \alpha,\|\nabla u\|_{L^p(\Omega'_T)},  [g]_{\beta, \Omega'_T}$ and $[v]_{\beta, \Omega'_T}$ such that $u$ and $\nabla u$ is H\"{o}lder continuous in $\Omega'_T$ with an exponent $\gamma$ on a parabolic metric $|x|+|t|^{1/p}$ and on the parabolic one, respectively. The H\"{o}lder constants are bounded above by $C$, where $[f]_\beta$ denote the H\"older semi-norm of a H\"older continuous function $f$ with a H\"{o}lder exponent $\beta$.

\end{thm}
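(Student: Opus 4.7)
The strategy is to transfer the continuity properties from $v=u^{q}$ to $u$ via the pointwise identity $u=v^{1/q}$, exploiting the quantitative bounds~\eqref{bounded from above and below1}. Throughout, let $\Omega' \subset\subset \Omega$ and $T \in (0,\infty)$ be fixed, and set $m_0 := \tilde c^{\,q}$, $M_0 := M^{q}$, so that $v = u^{q}$ satisfies $m_0 \leq v \leq M_0$ on $\Omega'_T$. On the compact interval $[m_0, M_0]$, the map $\Phi(s):=s^{1/q}$ is smooth with bounded derivatives of all orders; in particular $\Phi$ is Lipschitz and $\Phi'$ is Lipschitz (hence $\beta$-H\"older for every $\beta\in(0,1]$) on $[m_0,M_0]$, with constants depending only on $\tilde c$, $M$, $q$.

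First I would prove the H\"older continuity of $u$. By the mean value theorem,
\begin{equation*}
|u(z_1)-u(z_2)| = |\Phi(v(z_1))-\Phi(v(z_2))| \leq \|\Phi'\|_{L^\infty[m_0,M_0]}\,|v(z_1)-v(z_2)|
\end{equation*}
for all $z_1,z_2 \in \Omega'_T$, so $u$ inherits from Theorem~\ref{Holder continuity} the H\"older exponent $\beta$ relative to the metric $|x|+|t|^{1/p}$, with constant controlled by $\tilde c,M,q$ and $[v]_{\beta,\Omega'_T}$.

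Next I would establish the gradient H\"older continuity. Using $u=\Phi(v)$ and the chain rule,
\begin{equation*}
\nabla u = \Phi'(v)\,\nabla v = \tfrac{1}{q}\, v^{(1-q)/q}\,\nabla v\quad\text{a.e.\ in }\Omega'_T.
\end{equation*}
By Theorem~\ref{Gradient Holder continuity}, $\nabla v$ is $\alpha$-H\"older on $\Omega'_T$ with respect to the parabolic distance $d(z_1,z_2)=|x_1-x_2|+|t_1-t_2|^{1/p}$, with a quantitative constant $C$. Since $\Phi'$ is Lipschitz on $[m_0,M_0]$ and $v$ is $\beta$-H\"older in the same metric (the two metrics coincide), the composition $\Phi'\circ v$ is $\beta$-H\"older on $\Omega'_T$, with constant controlled by the Lipschitz constant of $\Phi'$ on $[m_0,M_0]$ and $[v]_{\beta,\Omega'_T}$. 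Moreover $\Phi'\circ v$ is uniformly bounded above and below by positive constants. The product of two bounded H\"older continuous functions with exponents $\alpha$ and $\beta$ is H\"older continuous with exponent $\gamma:=\min\{\alpha,\beta\}$, and the resulting semi-norm is controlled by
\begin{equation*}
\|\Phi'\circ v\|_{L^\infty}\,[\nabla v]_{\alpha} + \|\nabla v\|_{L^\infty}\,[\Phi'\circ v]_{\beta}.
\end{equation*}
The $L^\infty$ bound on $\nabla v$ is part of the gradient regularity result (it is implicit in, or an immediate consequence of, Theorem~\ref{Gradient Holder continuity}).

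The only mild obstacle is bookkeeping of the metrics and constants: Theorem~\ref{Holder continuity} states H\"older continuity with respect to $|x|+|t|^{1/p}$, while Theorem~\ref{Gradient Holder continuity} states ``the usual parabolic distance''. In the $p$-Laplacian setting these are the same $p$-parabolic metric, so that combining the two H\"older estimates in the product above is immediate. The dependencies listed in the conclusion (on $n,p,\tilde c, M, \lambda(0),\beta,\alpha, \|\nabla u\|_{L^p(\Omega'_T)}, [g]_{\beta,\Omega'_T}, [v]_{\beta,\Omega'_T}$) then follow by tracking how the bounds for $\Phi$, $\Phi'$ and the norms of $v$, $\nabla v$ depend on these data (using $\|\nabla v\|_{L^p}\simeq \|v^{1/q-1}\nabla u\|_{L^p}$, which is controlled by $\tilde c,M,q,\|\nabla u\|_{L^p(\Omega'_T)}$), which completes the proof.
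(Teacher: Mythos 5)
Your proposal is correct and is essentially the paper's own (largely implicit) argument: the paper proves this theorem only by remarking that it follows from Theorems \ref{Holder continuity} and \ref{Gradient Holder continuity} ``by an elementary algebraic estimate'' together with \eqref{bounded from above and below1}, which is precisely your composition $u=\Phi(v)=v^{1/q}$, the chain rule $\nabla u=\Phi'(v)\nabla v$, and the product rule for H\"older seminorms on the compact range $[\tilde c^{\,q},M^{q}]$. One small correction: the metrics $|x|+|t|^{1/p}$ and the usual parabolic distance $|x|+|t|^{1/2}$ coincide only for $p=2$; for $p>2$ they are merely comparable on the bounded cylinder $\Omega'_T$ after an exponent conversion (a $\beta$-H\"older function for $|x|+|t|^{1/p}$ is $\tfrac{2\beta}{p}$-H\"older for the parabolic distance), so your exponent should be $\gamma=\min\{\alpha,\tfrac{2\beta}{p}\}$ rather than $\min\{\alpha,\beta\}$ --- which still depends only on $n,p,\beta,\alpha$ and leaves the theorem's statement and your argument intact.
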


%%%%%%%%%%%%%%%%%%%%%%%%%%%%%%%%%%%%

%%%%%%%%AppendixA%%%%%%%%%%%%%%%%%

\begin{appendices}
%%%%%%%%%%%%%%%%%%%%%%%%%%%%%%
%%%%%%%%%%%%%%%%%%%%%%%%%%%%%%%
\section{Some fundamental facts}\label{Sec. Some fundamental facts}
%%%%%%%%%%%%%%%%%%%%%%%%%%%%%%%
%%%%%%%%%%%%%%%%%%%%%%%%%%%%%%%

%%%%%%%%%%%%%%%%%%%%%%%%%%%%%%%%
\subsection{$L^2$ estimate of the time derivative}
%%%%%%%%%%%%%%%%%%%%%%%%%%%%%%%%

We will show the existence in $L^2 (\Omega_T)$ of time-derivative for a weak solution to \eqref{pS}.

\begin{lem} \label{existence of u_t}
Let $u$ be a nonnegative solution to (\ref{pS}).Then there exists $\partial_tu$ in a weak sense, such that  $\partial_tu \in L^2(\Omega_T)$.
\end{lem}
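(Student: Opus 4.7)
My starting point is (D1) of Definition~\ref{def of weak sol.}, which gives $\partial_{t}(|u|^{q-1}u)=\partial_{t}u^{q}\in L^{2}(\Omega_{T})$, together with strict positivity of $u$ throughout $\Omega_{T}$ (Propositions~\ref{Interior positivity by the volume constraint} and~\ref{positivity around the boundary for p-Sobolev flow}) and boundedness $u\le M$ (Proposition~\ref{Boundedness of the p-Sobolev flow}). Since $u>0$, the formal chain rule $\partial_{t}u^{q}=q\,u^{q-1}\partial_{t}u$ suggests \emph{defining}
\[
\partial_{t}u:=\frac{1}{q}\,u^{1-q}\,\partial_{t}u^{q}\quad\text{on }\{u>0\}.
\]

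To identify this as the weak time derivative of $u$ I would use Steklov averaging. Setting $u^{h}(x,t):=h^{-1}\int_{t}^{t+h}u(x,s)\,ds$, the classical derivative $h^{-1}(u(t+h)-u(t))$ factors, via the pointwise mean value theorem applied to $s\mapsto s^{q}$, as
\[
\frac{u(x,t+h)-u(x,t)}{h}=\frac{1}{q\,\xi_{h}(x,t)^{q-1}}\cdot\frac{u^{q}(x,t+h)-u^{q}(x,t)}{h},
\]
with $\xi_{h}(x,t)$ between $u(x,t)$ and $u(x,t+h)$. By continuity and strict positivity of $u$, $\xi_{h}\to u$ pointwise as $h\to 0$, while $h^{-1}(u^{q}(\cdot+h)-u^{q})\to\partial_{t}u^{q}$ in $L^{2}(\Omega_{T})$. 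Testing against $\varphi\in C_{c}^{\infty}(\Omega_{T})$ and passing to the limit identifies the distributional derivative of $u$ with $\tfrac{1}{q}u^{1-q}\partial_{t}u^{q}$.

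For the $L^{2}$ bound I compute
\[
\int_{\Omega_{T}}|\partial_{t}u|^{2}\,dz=\frac{1}{q^{2}}\int_{\Omega_{T}}u^{-2(q-1)}(\partial_{t}u^{q})^{2}\,dz,
\]
and split into an interior and a boundary piece. On any $\Omega'\Subset\Omega$, Proposition~\ref{Interior positivity by the volume constraint} gives $u\ge\bar\eta(\Omega',T)>0$, so $u^{-2(q-1)}\le\bar\eta^{-2(q-1)}$ is uniformly bounded and the interior $L^{2}$ estimate follows directly from $\partial_{t}u^{q}\in L^{2}$. Near the boundary I would use the explicit barrier $v(x)=e^{-\alpha r^{2}}-e^{-\alpha\rho^{2}}$ from the proof of Proposition~\ref{Positivity near the boundary} in each interior-ball annulus $A=B_{\rho}(x_{0})\setminus\overline{B_{\rho'}(x_{0})}$, which yields $u(x,t)\gtrsim d(x,\partial\Omega)$. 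The resulting singular weight $u^{-2(q-1)}\sim d^{-2(q-1)}$ must be absorbed by matching decay of $\partial_{t}u^{q}$ near $\partial\Omega$, read off from the equation $\partial_{t}u^{q}=\Delta_{p}u+\lambda(t)u^{q}$ and boundary regularity for the $p$-Laplacian: one expects $(\partial_{t}u^{q})^{2}\lesssim d^{2q}$, whence $u^{-2(q-1)}(\partial_{t}u^{q})^{2}\lesssim d^{2}$, integrable in a tubular neighborhood of $\partial\Omega$. A finite covering of $\partial\Omega$ then concludes.

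The hard part is this boundary analysis. A bare positivity lower bound $u\gtrsim d$ alone produces a non-integrable weight, and one must extract quantitative boundary vanishing of $\partial_{t}u^{q}$ compatible with the singularity of $u^{1-q}$; this matching of boundary rates, resting on the doubly-nonlinear structure together with the interior-ball barrier, is the delicate technical point of the argument.
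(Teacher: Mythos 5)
Your interior argument (identifying $\partial_t u$ with $\tfrac1q u^{1-q}\partial_t u^q$ on sets where $u\geq\bar\eta>0$) is workable, but the proposal has a genuine gap exactly where you yourself locate the difficulty: the global $L^2(\Omega_T)$ bound near the lateral boundary. You need a pointwise decay $(\partial_t u^q)^2\lesssim d^{2q}$, $d=\mathrm{dist}(x,\partial\Omega)$, and you propose to read it off from $\partial_t u^q=\Delta_p u+\lambda(t)u^q$ together with ``boundary regularity for the $p$-Laplacian''. No such estimate is available here: (D1) only gives $\partial_t u^q\in L^2(\Omega_T)$, hence $\Delta_p u$ is controlled only in an integral sense, with no pointwise bound near $\partial\Omega$, let alone one with rate $d^{q}$. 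In fact, when $u$ vanishes linearly at the boundary (the behaviour consistent with your barrier bound $u\gtrsim d$), $|\nabla u|^{p-2}\nabla u$ is of order one near $\partial\Omega$ and there is no reason for its divergence to decay at all. So the step you call ``the delicate technical point'' is simply missing, and the conclusion $\partial_t u\in L^2(\Omega_T)$ --- which is precisely what is needed to use $\partial_t u$ as a global test function in the energy identity --- is not established. A secondary but serious problem is circularity: you invoke Propositions \ref{Interior positivity by the volume constraint} and \ref{Boundedness of the p-Sobolev flow} and the H\"older continuity, but in the paper's logical order these rest on \eqref{lambdaineq}, i.e.\ on Proposition \ref{Energy equality pS}(ii), whose proof uses the very Lemma \ref{existence of u_t} you are proving. (Parts of this could be repaired, e.g.\ $\lambda\in L^\infty(0,T)$ already follows from (D1) and Proposition \ref{Energy equality pS}(i), and the positivity results only need $u$ to be a supersolution of \eqref{maineq'} with $c=0$, but as written the dependence is backwards.)

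For contrast, the paper's proof uses none of the positivity, boundedness, continuity or barrier machinery. It works directly from (D1): test with $u\phi_\varepsilon(u)$ where $\phi_\varepsilon$ truncates at a level $a>0$; use that $h(v)=v^{1/q}$ is Lipschitz on $\{u\geq a\}$, so that the chain rule gives $\partial_t h(u^q)=\tfrac1q u^{1-q}\partial_t u^q\in L^2(\Omega_T\cap\{u\geq a\})$ with the harmless constant $a^{1-q}$; control the transition-layer term $\tfrac{u}{\varepsilon}\chi_{\{a\leq u\leq a+\varepsilon\}}$ by the Dirac-measure Lemma \ref{conv. Dirac}; and then pass $\varepsilon\searrow0$ followed by $a\searrow0$ by dominated convergence. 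This route avoids both the boundary-decay issue and the circularity, which is why it succeeds where your matching-of-rates strategy stalls.
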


%We next show Lemma \ref{existence of u_t}. %

\begin{proof}[Proof of Lemma \ref{existence of u_t}]
Let $a>0$ and $\varepsilon >0$ be arbitrary given. Let $u\geq 0$ be a weak solution to (\ref{pS}).  Let us define a truncated Lipschitz function $\phi_{\varepsilon}(x)$ by
\begin{equation*}
\phi_{\varepsilon}(x):=
\begin{cases}
0\quad &(0 \leq x \leq a)\\
\frac{1}{\varepsilon} (x-a)\quad &(a \leq x \leq a+\varepsilon)\\
1\quad &(x\geq a+\varepsilon).
\end{cases}
\end{equation*}
 We also set $h(v):=v^{1/q}$ for $v \geq 0$. For any $\varphi \in C^{\infty}_{0}(\Omega_{T})$,
 \begin{align}\label{eq.A1}
 \int_{\Omega_{T}}u \phi_{\varepsilon}(u)\partial_{t}\varphi\,dz&=-\int_{\Omega_{T}}\partial_{t}(u \phi_{\varepsilon} (u)) \varphi\,dz =-\int_{\Omega_{T}}\partial_{t}((u^{q})^{1/q} \phi_{\varepsilon} (u)) \varphi\,dz \notag \\
 &=-\int_{\Omega_{T}}[\partial_{t}h(u^{q})\phi_{\varepsilon}(u)+u \partial_{t}\phi_{\varepsilon}(u)]\varphi\,dz \notag \\
 &=-\int_{\Omega_{T}}\left[\partial_{t}h(u^{q})\phi_{\varepsilon}(u)+u \frac{\,1\,}{\varepsilon} \chi_{\{a \leq u \leq a+\varepsilon\}} \partial_{t}((u^{q})^{1/q} )\right]\,\varphi\,dz \notag \\
 &=-\int_{\Omega_{T}}\partial_{t}h(u^{q}) \left(\phi_{\varepsilon}(u)+u \frac{\,1\,}{\varepsilon} \chi_{\{a \leq u \leq a+\varepsilon\}} \right)\varphi\,dz.
 \end{align}
 We note that $h(v)=v^{1/q}$ is locally Lipschitz on $\{v = u^q : u \geq a\}$ and $\partial_{t}u^{q} \in L^{2}(\Omega_{T})$ by the very definition (D1) of Definition \ref{def of weak sol.} and thus, a composite function $h(u^{q})$ is weak differentiable in $\{u \geq a\}$ and
 \begin{equation*}
 \partial_{t}h(u^{q})=h'(u^{q})\cdot \partial_{t}u^{q} \in L^{2}(\Omega_{T} \cap \{u \geq a\}),
 \end{equation*}
 since $h'(u^{q})=\frac{1}{q}u^{1-q} \leq \frac{1}{q}a^{1-q}$ on $\{u \geq a\}$. Taking into account of
 \begin{equation*}
 \partial_{t}h(u^{q})\phi_{\varepsilon}(u) \to \partial_{t}h(u^{q})\chi_{\{u \geq a\}}\quad (\varepsilon \searrow 0),
 \end{equation*}
 \begin{equation*}
 |\partial_{t}h(u^{q})\phi_{\varepsilon}(u)| \leq \partial_{t}h(u^{q})\chi_{\{u\geq a\}} \in L^{1}(\Omega_{T})
 \end{equation*}
 and using the Lebesgue dominated convergence theorem, we have, for the first term on the right hand side of (\ref{eq.A1}),
 \begin{equation}\label{eq.A2}
 -\lim_{\varepsilon \searrow 0} \int_{\Omega_{T}}\partial_{t}h(u^{q})\phi_{\varepsilon}(u) \varphi\,dz=-\int_{\Omega_{T}}\partial_{t}h(u^{q})\chi_{\{u\geq a\}}\varphi\,dz.
 \end{equation}
 By Lemma \ref{conv. Dirac} the second term on the right hand side of (\ref{eq.A1}) is computed as
 \begin{equation}\label{eq.A3}
 \lim_{\varepsilon \searrow 0}\int_{\Omega_{T}}\partial_{t}h(u^{q})\frac{\,u\,}{\varepsilon} \chi_{\{a \leq u \leq a+\varepsilon\}}\,\varphi\,dz =a \int _{\Omega_T \cap\{u = a\}}\partial_t h (u^q)\varphi\,dz.
 \end{equation}
 By Lebesgue's dominated convergence theorem, the left hand side of (\ref{eq.A1}) is computed as
 \begin{equation}\label{eq.A4}
\int_{\Omega_{T}}u \phi_{\varepsilon}(u)\partial_{t}\varphi\,dz \to \int_{\Omega_{T}}u \chi_{\{u\geq a\}}\partial_{t}\varphi\,dz.
 \end{equation}
 Passing to the limit as $\varepsilon \searrow 0$  in (\ref{eq.A1}) and gathering (\ref{eq.A2}), (\ref{eq.A3}) and (\ref{eq.A4}), we have
\begin{equation*} % \label{e.}
 \int_{\Omega_{T}}u \chi_{\{u \geq a\}}\partial_{t}\varphi\,dz=-\int_{\Omega_{T}}\partial_{t}h(u^{q}) \chi_{\{u \geq a\}} \varphi\,dz+a \int _{\Omega_T \cap\{u = a\}}\partial_t h (u^q)\varphi\,dz.
 \end{equation*}
 Again, by Lebesgue's dominated theorem, taking the limit as $a \searrow 0$ in the above formula, we have
 \begin{equation*}
 \int_{\Omega_{T}}u\partial_{t}\varphi\,dz=-\int_{\Omega_{T}}\partial_{t}h(u^{q})\varphi\,dz=-\int_{\Omega_{T}}\partial_{t}u\cdot \varphi\,dz,
 \end{equation*}
 which completes the proof.
 \end{proof}

In the proof above, we used the following lemma as for the convergence of Dirac measure.

\begin{lem}\label{conv. Dirac}
Let $a>0$ and $\varepsilon >0$ be arbitrary given. Then \begin{equation*}
\frac{1}{\varepsilon}\chi_{\{a\leq u \leq a+\varepsilon\}} \to \delta_{(a)}\quad as \,\,\varepsilon \searrow 0\quad \textrm{in}\,\,\mathscr{D}'(\mathbb{R}),
\end{equation*}where we denote by $\mathscr{D}'(\mathbb{R})$ the distribution function space, which is the dual space of the space $\mathscr{D}$ of smooth functions with compact support in $\mathbb{R}$.
\end{lem}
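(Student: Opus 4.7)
The plan is to unwind the definition of convergence in $\mathscr{D}'(\mathbb{R})$. Fix an arbitrary test function $\varphi \in C^\infty_0(\mathbb{R})$; the pairing to evaluate is
\[
\Big\langle \tfrac{1}{\varepsilon}\chi_{[a, a+\varepsilon]},\, \varphi \Big\rangle \;=\; \frac{1}{\varepsilon}\int_a^{a+\varepsilon} \varphi(s)\,ds,
\]
so the statement reduces to showing this average converges to $\varphi(a)= \langle \delta_{(a)},\varphi\rangle$ as $\varepsilon\searrow 0$.

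The key observation is simply continuity of $\varphi$ at the point $a$: for any $\eta>0$ there exists $\delta>0$ such that $|\varphi(s)-\varphi(a)|<\eta$ whenever $|s-a|<\delta$. Then for every $0<\varepsilon<\delta$ one obtains
\[
\left|\frac{1}{\varepsilon}\int_a^{a+\varepsilon}\varphi(s)\,ds - \varphi(a)\right|
\;\leq\; \frac{1}{\varepsilon}\int_a^{a+\varepsilon}|\varphi(s)-\varphi(a)|\,ds \;\leq\; \eta,
\]
which gives the pointwise convergence of the pairings. Since $\varphi\in\mathscr{D}(\mathbb{R})$ was arbitrary, this is precisely convergence in $\mathscr{D}'(\mathbb{R})$, and the lemma follows.

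There is no real obstacle: the statement is the standard approximate-identity/mean-value fact, and one could alternatively appeal to the Lebesgue differentiation theorem for locally integrable functions, but this is overkill since the test functions are continuous. The only minor notational point is that the variable $u$ in $\chi_{\{a\leq u\leq a+\varepsilon\}}$ should be read as the real variable on $\mathbb{R}$ (the dummy in the distributional pairing), consistent with the way this lemma is invoked in the proof of Lemma~\ref{existence of u_t} via the change of variables induced by the solution map $(x,t)\mapsto u(x,t)$.
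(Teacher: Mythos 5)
Your argument is correct and is essentially the same as the paper's: both reduce the statement to the pairing $\frac{1}{\varepsilon}\int_a^{a+\varepsilon}\varphi\,ds \to \varphi(a)$ for an arbitrary test function, with you merely spelling out the continuity ($\eta$--$\delta$) estimate that the paper leaves implicit.
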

\begin{proof}
For any $\psi \in \mathscr{D}$, we have
\begin{align*}
\int_{-\infty}^{\infty}\frac{1}{\varepsilon}\chi_{\{a\leq u \leq a+\varepsilon\}}\psi \,du=\frac{1}{\varepsilon} \int_{a}^{a+\varepsilon}\psi du \to \psi(a)=\langle \delta_{(a)},\,\psi\rangle
\end{align*}
as $\varepsilon \searrow 0$. Therefore we have for any $\psi \in \mathscr{D}$ \begin{equation*}
\lim_{\varepsilon \searrow 0} \bigg\langle \frac{1}{\varepsilon}\chi_{\{a\leq u \leq a+\varepsilon\}},\,\psi \bigg\rangle =\langle \delta_{(a)},\,\psi \rangle,
\end{equation*}
which is our claim.
\end{proof}

 %%%%%%%%%%%%%%%%%%%%%%%%%%%%%%%%%%%%
 \subsection{Regularization}
 %%%%%%%%%%%%%%%%%%%%%%%%%%%%%%%%%%%%
 In this subsection, we will show the following regularization, Lemma \ref{regularization}. Before stating assertion, we prepare some notations. For $f \in L^{1}_{loc}(\Omega_{T})$, we denote the \textit{mollifier} of $f$ by \begin{equation*}
 f_{\varepsilon,h}(z):=\int_{Q_{\varepsilon,h}(z)}\rho_{\varepsilon,h}(z^\prime-z)f(z^\prime)\,dz^\prime.
 \end{equation*}Here $\varepsilon > 0$, $z=(x,t)$, $Q_{\varepsilon,h} (z)=B_\varepsilon (x)\times (t-h, t+h)\subset \Omega_T$, and
 \begin{equation*}
 \rho_{\varepsilon,h}(z):=\frac{1}{h}\rho_{1}\left(\frac{t}{h}\right)\frac{1}{\varepsilon^{n}}\rho_{2}\left(\frac{x}{\varepsilon} \right),
 \end{equation*}
 where $\rho_1$ and $\rho_2$ are smooth symmetric in the following sense:
\begin{equation*}
\rho_1(t)=\rho_1(-t), \quad \rho_2 (x)= \rho_2 (|x|),
\end{equation*}
and satisfies
 \begin{equation*}
 \supp (\rho_1) \subset (-1,1),\,\,
\int_{\mathbb{R}} \rho_1(t)dt=1\quad ; \quad
\supp (\rho_2) \subset B_1 (0),\,\,
\int_{\mathbb{R}^n} \rho_2 (x) d x=1.
 \end{equation*}
\begin{lem}\label{regularization}
 Let $0<t_1<t <T$. For the weak solution $u$ to (\ref{maineq'}),
 \begin{equation*}
 \int_{\Omega_{t_1,t}}|\nabla u|^{p-2}\nabla u\cdot \nabla \left((\partial_{t}u)_{\varepsilon,h}\right)_{\varepsilon,-h}\,dz \longrightarrow \int_\Omega
\frac{\,1\,}{p} |\nabla u|^p\,dx\Bigg|^{t}_{t_1}\quad as \quad \varepsilon, h \searrow 0.
 \end{equation*}
 \end{lem}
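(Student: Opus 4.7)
The plan is to exploit two properties of the symmetric mollifier $\rho_{\varepsilon,h}$: its self-adjointness $\int f\cdot g_{\varepsilon,h}\,dz=\int f_{\varepsilon,h}\cdot g\,dz$, and its commutativity with weak derivatives, $(\partial_t f)_{\varepsilon,h}=\partial_t f_{\varepsilon,h}$ and $(\nabla f)_{\varepsilon,h}=\nabla f_{\varepsilon,h}$. First I would use these to transfer one of the two mollifiers from the $\partial_t u$-factor onto $|\nabla u|^{p-2}\nabla u$, rewriting
\begin{equation*}
I_{\varepsilon,h}:=\int_{\Omega_{t_1,t}}|\nabla u|^{p-2}\nabla u\cdot\nabla\bigl((\partial_tu)_{\varepsilon,h}\bigr)_{\varepsilon,-h}\,dz=\int_{\Omega_{t_1,t}}\bigl(|\nabla u|^{p-2}\nabla u\bigr)_{\varepsilon,h}\cdot\partial_t\nabla u_{\varepsilon,h}\,dz,
\end{equation*}
so that the right-hand factor is now smooth (in the interior of $\Omega_T$, which is all one needs for $\varepsilon,h$ sufficiently small).

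Next I would add and subtract $|\nabla u_{\varepsilon,h}|^{p-2}\nabla u_{\varepsilon,h}$ to produce a perfect time derivative:
\begin{equation*}
I_{\varepsilon,h}=\int_{\Omega_{t_1,t}}\partial_t\!\left(\frac{|\nabla u_{\varepsilon,h}|^p}{p}\right)dz+R_{\varepsilon,h},
\end{equation*}
with remainder
\begin{equation*}
R_{\varepsilon,h}:=\int_{\Omega_{t_1,t}}\mathcal{E}_{\varepsilon,h}\cdot\partial_t\nabla u_{\varepsilon,h}\,dz,\qquad \mathcal{E}_{\varepsilon,h}:=\bigl(|\nabla u|^{p-2}\nabla u\bigr)_{\varepsilon,h}-|\nabla u_{\varepsilon,h}|^{p-2}\nabla u_{\varepsilon,h}.
\end{equation*}
The first term is a perfect $\partial_t$ of a smooth nonnegative function and evaluates to $\int_\Omega|\nabla u_{\varepsilon,h}|^p/p\,dx\big|_{t_1}^{t}$, which converges to $\int_\Omega|\nabla u|^p/p\,dx\big|_{t_1}^{t}$ by the standard $L^p$-convergence of mollifiers applied to $\nabla u\in L^\infty(0,T;L^p(\Omega))$, taking $t_1,t$ to be Lebesgue points in time (which is all that the application in Proposition~\ref{Energy equality pS} requires).

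The main obstacle is showing $R_{\varepsilon,h}\to 0$. Although $\mathcal{E}_{\varepsilon,h}\to 0$ in $L^{p/(p-1)}(\Omega_T)$ by continuity of $y\mapsto|y|^{p-2}y$ (Lemma~\ref{Barrett-Liu}) together with the strong $L^p$-convergence $\nabla u_{\varepsilon,h}\to\nabla u$, the partner $\partial_t\nabla u_{\varepsilon,h}$ is not uniformly bounded as $\varepsilon,h\searrow 0$, so a direct H\"older-type bound fails. I would circumvent this by integrating by parts in $x$ (the spatial boundary terms vanish because the mollifier has compact support strictly inside $\Omega$ for $\varepsilon$ small), obtaining
\begin{equation*}
R_{\varepsilon,h}=-\int_{\Omega_{t_1,t}}\mathrm{div}(\mathcal{E}_{\varepsilon,h})\cdot(\partial_tu)_{\varepsilon,h}\,dz.
\end{equation*}
Using the equation \eqref{maineq'} in the form $\Delta_pu=\partial_tu^q-cu^q\in L^2(\Omega_T)$, one has $\mathrm{div}\bigl(|\nabla u|^{p-2}\nabla u\bigr)_{\varepsilon,h}=(\Delta_pu)_{\varepsilon,h}\to\Delta_pu$ strongly in $L^2$, and $(\partial_tu)_{\varepsilon,h}\to\partial_tu$ in $L^2$ by Lemma~\ref{existence of u_t}. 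The delicate remaining piece is to show that the companion term $\mathrm{div}\bigl(|\nabla u_{\varepsilon,h}|^{p-2}\nabla u_{\varepsilon,h}\bigr)$ converges to $\Delta_p u$ in the same $L^2$-sense; this is achieved by promoting the strong $L^p$-convergence of $\nabla u_{\varepsilon,h}$ via the monotonicity inequality \eqref{BL2pre}. A Cauchy--Schwarz estimate then forces $R_{\varepsilon,h}\to 0$, completing the proof.
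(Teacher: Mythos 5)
Your proposal follows essentially the same route as the paper's proof: transfer one mollifier onto the vector field by symmetry of $\rho_{\varepsilon,h}$, split off the perfect time derivative $\int_{\Omega_{t_1,t}}\partial_t\bigl(\tfrac{1}{p}|\nabla u_{\varepsilon,h}|^p\bigr)\,dz$, and kill the remainder by integrating by parts in $x$ and invoking the strong $L^2$-convergences of $\bigl(\mathrm{div}(|\nabla u|^{p-2}\nabla u)\bigr)_{\varepsilon,h}$, of $\mathrm{div}(|\nabla u_{\varepsilon,h}|^{p-2}\nabla u_{\varepsilon,h})$, and of $(\partial_t u)_{\varepsilon,h}$ (Lemma \ref{existence of u_t}); your remainder $R_{\varepsilon,h}$, after the spatial integration by parts, is exactly the paper's term $I$. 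The one step you flag as delicate, namely $\mathrm{div}(|\nabla u_{\varepsilon,h}|^{p-2}\nabla u_{\varepsilon,h})\to\Delta_p u$ in $L^2$, is asserted in the paper with no more justification than yours, so this is not a deviation from its argument.
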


 \begin{proof}
 From (D1) in Definition \ref{def of weak sol.} and Fubini's theorem, it follows that
 \begin{align}\label{conv. eq0}
&\int_{\Omega_{t_1,t}}|\nabla u|^{p-2}\nabla u \cdot \nabla \left((\partial_{t}u)_{\varepsilon, h}\right)_{\varepsilon, -h}\,dz \notag \\
&=\int_{\Omega_{t_1,t}}\left(|\nabla u|^{p-2}\nabla u \right)_{\varepsilon, h} \cdot \nabla (\partial_{t}u)_{\varepsilon, h}\,dz \notag \\
%&=\int_{\Omega_{t_1,t}}\bigg[ \left(|\nabla u|^{p-2}\nabla u \right)_{\varepsilon}-|\nabla u_\varepsilon|^{p-2}\nabla u_\varepsilon \bigg]\cdot \nabla (\partial_{t}u)_{\varepsilon}\,dz \notag \\
%&\quad \quad \quad +\int_{\Omega_{t_1,t}}|\nabla u_\varepsilon|^{p-2}\nabla u_\varepsilon \cdot \nabla (\partial_{t}u)_{\varepsilon}\,dz \notag \\
 &\quad =-\int_{\Omega_{t_1,t}}\bigg[ \mathrm{div} \left(|\nabla u|^{p-2}\nabla u \right)_{\varepsilon, h} -\mathrm{div}\left(|\nabla u_{\varepsilon, h}|^{p-2}\nabla u_{\varepsilon, h} \right) \bigg]\partial_{t}u_{\varepsilon, h}\,dz \notag\\
 &\quad \quad \quad \quad \quad +\int_{\Omega_{t_1,t}}|\nabla u_{\varepsilon, h}|^{p-2}\nabla u_{\varepsilon, h} \cdot  \partial_{t}(\nabla u_{\varepsilon, h})\,dz \notag \\
 &\quad =:I+J.
 \end{align}
 Since $\mathrm{div} (|\nabla u|^{p-2}\nabla u) \in L^{2}(\Omega_{T})$ by (D1) and (D2) again, we have, as $\varepsilon, h \searrow 0$,
\begin{equation}\label{conv. eq1}
 \Big(\mathrm{div} \left(|\nabla u|^{p-2}\nabla u \right)\Big)_{\varepsilon, h},\,\, \mathrm{div} (|\nabla u_{\varepsilon, h}|^{p-2} \nabla u_{\varepsilon, h})  \to \mathrm{div}\left(|\nabla u|^{p-2}\nabla u \right)\quad \textrm{strongly}\,\,\textrm{in}\,\,L^{2}(\Omega_{T}),
 \end{equation}
 and, by Lemma \ref{existence of u_t}, as $\varepsilon, h \searrow 0$,
 \begin{equation}\label{conv. eq2}
 (\partial_{t}u)_{\varepsilon, h} \to \partial_{t}u\quad \textrm{strongly}\,\,\textrm{in}\,\,L^{2}(\Omega_{T}).
 \end{equation}
 It follows from (\ref{conv. eq1}) and (\ref{conv. eq2}) that
 \begin{equation}\label{conv. eq3}
 I \to 0\quad \textrm{as} \quad \varepsilon, h \searrow 0.
 \end{equation}
 By (D1) in Definition \ref{def of weak sol.} we have, as $\varepsilon, h \to 0$,
 \begin{align}\label{conv. eq4}
J=\int_\Omega \frac{1}{p}|\nabla u_{\varepsilon, h}|^p
d x \Bigg|^{t}_{t_1}
\to \int_\Omega
 \frac{1}{p}|\nabla u|^p
d x \Bigg|^{t}_{t_1}
 \end{align}
 and thus, gather (\ref{conv. eq3}), (\ref{conv. eq4}) and (\ref{conv. eq0}) to complete the proof.
 \end{proof}

 %%%%%%%%%%%%%%%%%%%%%%%%%%%%%%%%%%%%%%%%%%%
 %%%%%%%%%%%%%%%%%%%%%%%%%%%%%%%%%%%%%%%%%%%
\section{Proof of Proposition \ref{Energy equality pS}}\label{Sec. Proof energy equality}
%%%%%%%%%%%%%%%%%%%%%%%%%%%%%%%%%%%%%%%
%%%%%%%%%%%%%%%%%%%%%%%%%%%%%%%%%%%%%%%

This section is devoted to prove Proposition \ref{Energy equality pS}.
 \begin{proof}[Proof of Proposition \ref{Energy equality pS}]
 (i)\quad Let $0< t_1 < t < T$ be arbitrarily taken and let $\sigma_{t_1,t}$ be the same time cut-off function as in the proof of Proposition \ref{Nonnegativity prop}. The function $\sigma_{t_1, t}u$ is in $L^\infty (t_1, t ; W^{1, p} (\Omega))$, and nonnegative by Proposition \ref{nonnegativity pS}
 %ropositions \ref{Interior positivity by the volume constraint}, \ref{positivity around the boundary for p-Sobolev flow} and \ref{Boundedness of the p-Sobolev flow}%
 and thus, is an admissible test function in (D2) for \eqref{pS}.
Choose a test function as $\sigma_{t_1, t} u$ in (D2) for \eqref{pS}, to have
\begin{equation}\label{energy ineq1}
\int_{\Omega_{t_{1},t}}\partial_{\tau}(u^{q})\sigma_{t_1, t}u\,dz+\int_{\Omega_{t_{1},t}}|\nabla u|^{p-2}\nabla u\cdot \nabla\left(\sigma_{t_1, t}u\right)\,dz = \int_{\Omega_{t_{1},t}}\lambda(\tau)u^{q}\sigma_{t_1, t}u\,dz.
\end{equation}
By the very definition of $A^+(u)$ for $u \geq 0$, the first term on the left hand side of (\ref{energy ineq1}) is computed as
\begin{align}\label{energy ineq2}
\int_{\Omega_{t_{1},t}}\partial_{\tau}(u^{q})\sigma_{t_1, t}u\,dz&=\int_{\Omega_{t_{1},t}}\partial_{\tau}A^+(u)\sigma_{t_1, t}\,dz \notag \\
&=\int_{\Omega} A^+(u)\sigma_{t_1, t}\,dx \bigg|_{t_1}^t-\int_{\Omega_{t_{1},t}}A^+(u)\partial_\tau\sigma_{t_1, t}\,dz \notag \\
&\to\int_\Omega \frac{q}{q+1}u(t)^{q+1}\,dx-\int_\Omega \frac{q}{q+1}u(t_1)^{q+1}\,dx \quad \textrm{as}\quad \delta \searrow 0.
\end{align}
The second term on the left hand side of (\ref{energy ineq1}) is treated as
\begin{align}\label{energy ineq3}
\int_{\Omega_{t_{1},t}}|\nabla u|^{p-2}\nabla u\cdot \nabla \left(\sigma_{t_1, t}u\right)\,dz = \int_{\Omega_{t_{1},t}}|\nabla u|^{p}\sigma_{t_1, t} dz \to \int_{\Omega_{t_{1},t}}|\nabla u|^{p}\,dz \quad \textrm{as}\quad \delta \searrow 0.
\end{align}
Using (\ref{energy ineq2}), (\ref{energy ineq3}) and the volume preserving condition $\displaystyle \int_{\Omega}u(x,t)^{q+1}\,dx=1,\,\,t \geq 0$, we take the limit as $\delta \searrow 0$ in (\ref{energy ineq1}) to obtain that
\begin{equation*}
\int_{\Omega_{t_{1},t}}|\nabla u|^{p}\,dz=\int_{\Omega_{t_{1},t}}\lambda(\tau)u^{q+1}\,dz.\notag
\end{equation*}
Dividing above formula by $t-t_{1}$, we have
\begin{equation*}
\frac{1}{t-t_{1}}\int_{t_{1}}^{t}\int_{\Omega}|\nabla u(x,\tau)|^{p}dxd\tau=\frac{1}{t-t_{1}}\int_{t_{1}}^{t} \int_{\Omega}\lambda(\tau)u^{q+1}(x,\tau)\,dxd\tau.
\end{equation*}
According to the volume preserving condition again, passing the limit as  $t \searrow t_{1}$ in the formula above, we obtain that
\begin{equation*}
\lambda(t_{1})=\int_{\Omega}|\nabla u(x,t_{1})|^{p}\,dx,
\end{equation*}
which is our first assertion.
%
%\begin{equation}\label{energy ineq4}
%\int_\Omega \frac{q}{q+1}|u(t)|^{q+1}\,dx+ \int_{\Omega_{0,t}}|\nabla u|^{p}\,dz=\int_\Omega \frac{q}{q+1}|u_0|^{q+1}\,dx+\lambda(t)\int_{\Omega_{0,t}} |u|^{q+1}\,dz.\notag
%
%\end{equation}

%
%From the Gronwall lemma, for any $t \in [0,T]$, it follows that
%
%\begin{align}
%\int_\Omega |u(t)|^{q+1}\,dx \leq \left(\int_\Omega |u_0|^{q+1}\,dx\right)e^{c\frac{q+1}{q}t}\leq \left(\int_\Omega u_0^{q+1}\,dx\right)e^{c\frac{q+1}{q}T},
%\end{align}
%
%which is the desired conclution (i). \\[2mm]
\bigskip
\noindent
\\
\noindent
(ii)\quad We notice the boundedness of the solution $u$ of the p-Sobolev flow. This is shown as follows:
By Proposition \ref{Energy equality pS} (i) above,
$\lambda (t) = \|\nabla u (t)\|^{p}_{L^p (\Omega)}$ and thus,
$\lambda (t) \in L^\infty (0, T)$
by (D1) in Definition \ref{def of weak sol.}.
We also have Proposition \ref{Energy equality pS} (i) that $(u)_+$ is bounded in $\Omega_T$ as in Proposition \ref{Boundedness of the p-Sobolev flow}, and thus, $u$ itself bounded by Proposition \ref{Nonnegativity prop pS}.
Consequently, the function $\sigma_{t_1, t}\partial_tu$ is an admissible test function in (D2) of Definition \ref{def of weak sol.}
by Lemmata \ref{existence of u_t} and \ref{regularization}.
We now take a test function as $\sigma_{t_1, t} \partial_tu$ in (D2) of Definition \ref{def of weak sol.} and then

\begin{equation}\label{energy ineqb1}
\int_{\Omega_{t_{1},t}}\partial_{t}(u^q)\sigma_{t_1, t}\partial_tu\,dz+\int_{\Omega_{t_{1},t}}|\nabla u|^{p-2}\nabla u\cdot \nabla \left(\sigma_{t_1, t}\partial_tu\right)\,dz = \int_{\Omega_{t_{1},t}}\lambda(t)u^q\sigma_{t_1, t}\partial_tu\,dz,
\end{equation}
Note that the integral on the right hand side in (\ref{energy ineqb1}) is finite by Proposition \ref{Boundedness} and Lemma \ref{existence of u_t}.
Using the Lebesgue dominated theorem with Proposition \ref{Boundedness} and Lemma \ref{existence of u_t}, the first term on the left hand side of (\ref{energy ineqb1}) is computed as
\begin{align}\label{energy ineqb2}
\int_{\Omega_{t_{1},t}}\partial_{t}u^q\sigma_{t_1, t}\partial_tu\,dz
&=q\int_{\Omega_{t_1,t}}u^{q-1}(\partial_t u)^2\,\sigma_{t_1, t}dz \notag \\
&\to q\int_{\Omega_{t_1,t}}u^{q-1}(\partial_t u)^2\,dz\quad \textrm{as}\quad \delta \searrow 0.
\end{align}
%
%
%Analogously to (\ref{energy ineq2}), the first term on the left hand side of (\ref{energy ineq5}) can be computed as
%
%\begin{align}\label{energy ineq6}
%\int_{\Omega_{t_{1},t}}\partial_{t}(|u|^{q-1}u)\sigma_{t_1, t}\partial_tu\,dz
%&=\int_{\Omega_{t_{1},t}}\partial_{t}u_+^q\sigma_{t_1, t}\partial_tu_+\,dz+ \int_{\Omega_{t_{1},t}}\partial_{t}(-u)_+^q\sigma_{t_1, t}\partial_t(-u)_+\,dz\notag \\
%&=q\int_{\Omega_{t_{1},t}}u_+^{q-1}(\partial_t u_+)^2\sigma_{t_1,t}\,dz +q\int_{\Omega_{t_{1},t}}(-u)_+^{q-1}(\partial_t (-u)_+)^2\sigma_{t_1,t}\,dz \notag \\
%&\to  q\int_{\Omega_{t_1,t}}u_+^{q-1}(\partial_t u_+)^2\,dz +q\int_{\Omega_{t_1,t}}(-u)_+^{q-1}(\partial_t (-u)_+)^2\,dz  \quad \textrm{as}\quad \delta \searrow 0 \notag \\
%\to q\int_{\Omega_{t_1,t}}|u|^{q-1}(\partial_t u)^2\,dz\quad \textrm{as}\quad \delta \searrow 0.
%\end{align}
%
The second term on the left hand side of (\ref{energy ineqb1}) is  treated as
\begin{align}\label{energy ineqb3}
\int_{\Omega_{t_1,t}}|\nabla u|^{p-2}\nabla u\cdot \nabla \left(\sigma_{t_1, t}\partial_tu\right)\,dz&=\int_{\Omega_{t_1,t}}|\nabla u|^{p-2}\nabla u\cdot \partial_t\nabla u\sigma_{t_1, t}\,dz \notag \\
&=\int_{\Omega_{t_1,t}} \partial_t \left(\frac{\,1\,}{p}|\nabla u|^p\right)\sigma_{t_1, t}\,dz \notag \\
&=\int_\Omega \frac{\,1\,}{p}|\nabla u|^p\,\sigma_{t_1, t}\,dx\Bigg|_{t_1}^t-\int_{\Omega_{t_1,t}}  \frac{\,1\,}{p}|\nabla u|^p\partial_t\sigma_{t_1, t}\,dz \notag \\
&\to \int_\Omega \frac{\,1\,}{p}|\nabla u(x,t)|^p\,dx-\int_\Omega \frac{\,1\,}{p}|\nabla u(x,t_1)|^p\,dx\quad \textrm{as}\quad \delta \searrow 0 \notag \\
&=\frac{\,1\,}{p}\lambda(t)-\frac{\,1\,}{p}\lambda(t_1),
\end{align}
where the manipulation in the second and third lines are justified by Lemma \ref{regularization} in Appendix \ref{Sec. Some fundamental facts}. %Since $\lambda(t)$ is Frech\'{e}t differentiable%,
By the volume conservation $\int_\Omega u(x,t)^{q+1}=1, \,\,t \geq 0$, the right hand side of (\ref{energy ineqb1}) is calculated as
\begin{align}\label{energy ineqb4}
\int_{\Omega_{t_1,t}}\lambda(t)u^q\sigma_{t_1, t}\partial_tu\,dz
&=\int_{t_1}^t \lambda(t) \sigma_{t_1,t} \frac{d}{dt}\left(\int_\Omega \frac{u^{q+1}}{q+1}\,dx \right)\,dt %\notag \\
%&=\lambda(t)\sigma_{t_1,t}\left(\int_\Omega \frac{u^{q+1}}{q+1}\,dx \right) \Bigg|_{t_1}^t-\int_{t_1}^t \lambda'(t) \sigma_{t_1,t} \left(\int_\Omega \frac{u^{q+1}}{q+1}\,dx \right)\,dt \notag \\
%&\qquad \qquad +\frac{1}{\delta}\left(\int_{t-\delta}^t-\int_{t_1}^{t_1+\delta}\right)\lambda(t)\left(\int_\Omega \frac{u^{q+1}}{q+1}\,dx \right)\,dt \notag \\
%&=-\frac{1}{q+1}\int_{t_1}^t \lambda'(t)\sigma_{t_1,t}\,dt+\frac{1}{(q+1)\delta}\left(\int_{t-\delta}^t-\int_{t_1}^{t_1+\delta}\right)\lambda(t)\,dt \notag \\
%&\to- \frac{1}{q+1}\int_{t_1}^t \lambda'(t)\,dt+\frac{1}{q+1}(\lambda(t)-\lambda(t_1))\quad\textrm{as}\quad \delta \searrow 0 \notag \\
=0.
\end{align}
From (\ref{energy ineqb2}), (\ref{energy ineqb3}) and (\ref{energy ineqb4}), it follows that
\begin{equation*}
q\int_{\Omega_{t_1,t}}u^{q-1}(\partial_t u)^2\,dz+\frac{\,1\,}{p}\lambda(t)-\frac{\,1\,}{p}\lambda(t_1)=0.
\end{equation*}
Letting $t_1=0$, we have the desired result.

\end{proof}

 \section{Notes on H\"{o}lder regularity}\label{Sec. Notes on Holder regularity}
 %%%%%%%%%%%%%%%%%%%%%%%%%%%%%%%%%%%%%%
 %%%%%%%%%%%%%%%%%%%%%%%%%%%%%%%%%%%%%%%

 %%%%%%%%%%%%%%%%%%%%%%%%%%%%%%%%%%%%%%%%%
 \subsection{A local energy estimate for (\ref{eq. of v})}\label{subsect. energy of v}
 %%%%%%%%%%%%%%%%%%%%%%%%%%%%%%%%%%%%%%
 We will derive a local energy estimate for (\ref{eq. of v}) here.

 \begin{lem}\label{energy of v}
Let $\theta>0$ be a parameter.  For any $z_0=(x_0,t_0) \in \Omega'_T$, take $\rho>0$ such that  $Q(\theta,\rho)(z_0)\equiv B_\rho(x_0) \times (t_0-\rho^\theta,t_0) \subset \Omega'_T$. Let $\zeta$ be a piecewise smooth function on $Q(\theta,\rho)(z_0)$ satisfying
\begin{equation*}
0 \leq \zeta \leq 1,\quad |\nabla \zeta|<\infty,\quad \zeta(x,t)=0\quad \textrm{outside}\,\,Q(\theta,\rho)(z_0).
\end{equation*}
Furthermore, take a positive number $\delta_0$ such that $$\displaystyle \esssup_{Q(\theta,\rho)(z_0)}|(k-v)_+|,\,\,\esssup_{Q(\theta,\rho)(z_0)}|(v-k)_+|\leq \delta_0$$ for some $k\geq 0$. Then the following inequality holds true.
\begin{enumerate}[(i)]
\item Let $v$ be a weak supersolution to (\ref{eq. of v}). Then it holds that
\begin{align}\label{local energy ineq of v}
&\esssup_{t_0-\rho^\theta <t<t_0}\int_{B_\rho(x_0)}(k-v)_+^2\zeta^p\,dx+\int_{Q(\theta,\rho)(z_0)}|\nabla (k-v)_+|^p\zeta^p\,dz \notag \\
&\quad \leq \int_{B_\rho(x_0)\times \{t_0-\rho^\theta\}}(k-v)_+^2\zeta^p\,dx+C\int_{Q(\theta,\rho)(z_0)}(k-v)_+^p|\nabla \zeta|^p\,dz \notag \\
&\qquad \qquad +\int_{Q(\theta,\rho)(z_0)}(k-v)_+^2\zeta^{p-1}|\zeta_t|\,dz+C\delta_0 \int_{Q(\theta,\rho)(z_0)}\chi_{\{(k-v)_+>0\}}dz,
\end{align}
where $C$ is a positive constant depending only on $n,p,\tilde{c},M,\lambda(0)$.

\item Let $v$ be a weak subsolution to (\ref{eq. of v}). Then it holds that
\begin{align}\label{local energy ineq of v}
&\esssup_{t_0-\rho^\theta <t<t_0}\int_{B_\rho(x_0)}(v-k)_+^2\zeta^p\,dx+\int_{Q(\theta,\rho)(z_0)}|\nabla (v-k)_+|^p\zeta^p\,dz \notag \\
&\quad \leq \int_{B_\rho(x_0)\times \{t_0-\rho^\theta\}}(v-k)_+^2\zeta^p\,dx+C\int_{Q(\theta,\rho)(z_0)}(v-k)_+^p|\nabla \zeta|^p\,dz \notag \\
&\qquad \qquad +\int_{Q(\theta,\rho)(z_0)}(v-k)_+^2\zeta^{p-1}|\zeta_t|\,dz+C\delta_0 \int_{Q(\theta,\rho)(z_0)}\chi_{\{(v-k)_+>0\}}dz,
\end{align}
where $C$ is a positive constant depending only on $p,n,\tilde{c},M, \lambda(0)$.
\end{enumerate}
\end{lem}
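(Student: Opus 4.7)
The plan is to test the weak formulation of \eqref{eq. of v} with a standard De Giorgi truncation multiplied by a spatial-temporal cutoff, exploiting the uniform positive bounds $0 < c_0 \leq g \leq c_1$ that come from \eqref{bounded from above and below1}. Part (ii) is obtained by the symmetric argument, so I focus on (i).

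For part (i) I would take the admissible nonnegative test function $\varphi = (k-v)_+ \zeta^p$ in the supersolution inequality, and integrate over $B_\rho(x_0) \times (t_0 - \rho^\theta, t)$ for $t \in (t_0 - \rho^\theta, t_0]$. The time term, handled by Steklov averaging as in the proof of Lemma \ref{energy lemma} (legitimate because $v = u^q$ and $\partial_t u^q \in L^2(\Omega_T)$ by (D1) of Definition \ref{def of weak sol.}), produces after integration by parts in $t$
\[
\int_{B_\rho(x_0)} \tfrac12 (k-v(\cdot,t))_+^2 \zeta^p\,dx - \int_{B_\rho(x_0)\times\{t_0-\rho^\theta\}} \tfrac12 (k-v)_+^2 \zeta^p\,dx - \int \tfrac12 (k-v)_+^2 \partial_t \zeta^p\,dz.
\]
The diffusion term $\int |\nabla v|^{p-2} g^{p-1} \nabla v \cdot \nabla \varphi\,dz$ splits, after using $\nabla (k-v)_+ = -\nabla v \cdot \chi_{\{v<k\}}$, into the coercive piece $\int g^{p-1}|\nabla (k-v)_+|^p \zeta^p\,dz$ together with a cross term involving $\nabla \zeta$. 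Young's inequality, combined with the two-sided bound on $g$, absorbs the cross term against a small fraction of the coercive piece at the cost of $C \int (k-v)_+^p |\nabla \zeta|^p\,dz$.

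The lower-order right-hand side $\int \lambda(t) v (k-v)_+ \zeta^p\,dz$ is the only genuinely new ingredient compared to the standard homogeneous $p$-parabolic Caccioppoli estimate, and is what produces the extra $\delta_0$ term in the statement. Here I would use three facts simultaneously: $\lambda(t) \leq \lambda(0)$ from \eqref{lambdaineq}, the uniform upper bound $v \leq M^q$ from Proposition \ref{Boundedness of the p-Sobolev flow}, and crucially the a priori smallness $(k-v)_+ \leq \delta_0$ built into the hypothesis, which gives
\[
\bigl| \lambda(t) v (k-v)_+ \zeta^p \bigr| \leq \lambda(0) \, M^q \, \delta_0 \, \chi_{\{(k-v)_+ > 0\}},
\]
matching exactly the last term of \eqref{local energy ineq of v}. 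After reorganizing and taking the essential supremum in $t \in (t_0 - \rho^\theta, t_0)$, the estimate follows.

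The main obstacle I anticipate is purely bookkeeping: making the time-derivative manipulation rigorous for a (merely weak) supersolution, since a priori $\partial_t v$ is only a distribution. This is resolved by mollifying in time as in Appendix \ref{Sec. Some fundamental facts} and passing to the limit — the lower-order term passes by dominated convergence using $v \in L^\infty$, and the gradient term is lower semicontinuous. Part (ii) is handled by choosing $\varphi = (v-k)_+ \zeta^p$ in the subsolution inequality; the sign arithmetic is the same, and the bound $(v-k)_+ \leq \delta_0$ again absorbs the lower-order term into $C \delta_0 \int \chi_{\{(v-k)_+ > 0\}}\,dz$.
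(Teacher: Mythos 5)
Your proposal is correct and follows essentially the same route as the paper's proof: testing with $\pm(k-v)_+\zeta^p$ over $B_\rho(x_0)\times(t_0-\rho^\theta,t)$, integrating the time term by parts, absorbing the $\nabla\zeta$ cross term via Young's inequality using the two-sided bound on $g$ from \eqref{bounded from above and below1}, and bounding the lower-order term by $C\delta_0\chi_{\{(k-v)_+>0\}}$ using $\lambda(t)\leq\lambda(0)$, the bound $v\leq M^q$, and the hypothesis $(k-v)_+\leq\delta_0$. Your additional remark on justifying the time-derivative step by mollification is a sound (and slightly more careful) version of what the paper does implicitly.
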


\begin{proof} We give the proof only for the case (i).
Take a test function as $\varphi=-(k-v)_+\zeta^p$ in the weak form of (\ref{eq. of v}); i.e.,
\begin{equation} \label{weak form eq. of v}
-\int_{Q_t(\theta, \rho)(z_0)}v\partial_t\varphi dz+\int_{Q_t(\theta, \rho)(z_0)}|\nabla v|^{p-2} \nabla v\cdot \nabla\varphi\,dz=c\int_{Q_t(\theta, \rho)(z_0)}v\varphi\,dz,
\end{equation}
where $Q_t(\theta, \rho)(z_0):=B_\rho(x_0) \times (t_0-\rho^\theta, t)$ for any $t \in (t_0-\rho^\theta, t_0) $. The first term on the left hand side of (\ref{weak form eq. of v}) is computed as
\begin{equation}\label{energy ineq1 of v}
\int_{B_\rho(x_0)\times \{t\}}\frac{1}{2}(k-v)_+^2\zeta^p\,dx \bigg|_{t_0-\theta}^t-\int_{Q_t(\theta, \rho)}\frac{1}{2}(k-v)_+^2p\zeta^{p-1}\zeta_t\,dz .
\end{equation}
Meanwhile, by use of (\ref{bounded from above and below1}) and Young's inequality, the second term of (\ref{weak form eq. of v}) is estimated from below as
\begin{align}\label{energy ineq2 of v}
&\int_{Q_t(\theta, \rho)(z_0)}|\nabla v|^{p-2}\nabla v\cdot \nabla (-(k-v)_+\zeta^p)\,dz \notag \\
&=\int_{Q_t(\theta, \rho)(z_0)}|\nabla(k-v)_+|^{p-2} \nabla (k-v)_+\cdot \nabla(k-v)_+\zeta^p\,dz \notag \\
&\qquad \qquad +\int_{Q_t(\theta, \rho)(z_0)}|\nabla (k-v)_+|^{p-2} \nabla (k-v)_+\cdot \big((k-v)_+p\zeta^{p-1}\nabla\zeta\big)\,dz \notag \\
&\geq c_0\int_{Q_t(\theta, \rho)(z_0)}|\nabla (k-v)_+|^{p}\zeta^p\,dz-c_1\int_{Q_t(\theta, \rho)(z_0)}|k-v)_+^{p}|\nabla \zeta|^p\,dz,
\end{align}
where $c_0$ and $c_1$ are positive constants depending only on $p,\,n,\,M$. By using \eqref{lambdaineq} in Proposition \ref{Energy equality pS} the right hand side of (\ref{weak form eq. of v}) is bounded above by
\begin{equation}\label{energy ineq3 of v}
\bigg| \lambda(t)\int_{Q_t(\theta, \rho)(z_0)}v(-(k-v)_+\zeta^p)\,dz \bigg| \leq c_2 \delta_0\int_{Q_t(\theta, \rho)(z_0)}\chi_{\{(k-v)_+>0\}}dz,
\end{equation}
where $c_2$ is a positive constant depending only on $\tilde{c}, \lambda(0)$.
Gathering (\ref{energy ineq1 of v}), (\ref{energy ineq2 of v}) and (\ref{energy ineq3 of v}), we arrive at the desired estimate (\ref{local energy ineq of v}). \end{proof}

%%%%%%%%%%%%%%%%%%%%%%%%%%%%%%%%%%%%%%%%%%%
\subsection{Outline of proof of Theorem \ref{Gradient Holder continuity}}\label{subsect. Outline of proof of Theorem Gradient Holder continuity}
%%%%%%%%%%%%%%%%%%%%%%%%%%%%%%%%%%%%%%%%%%%

We recall the outline of proof of Theorem \ref{Gradient Holder continuity} here.
\begin{proof}
By the H\"older continuity in Theorem \ref{Holder continuity}, the equation (\ref{eq. of v}) is an evolutionary $p$-Laplacian system
with H\"{o}lder continuous elliptic and boudedness coefficients $g$ and lower order terms $v$.
We apply the gradient H\"{o}lder regularity for the evolutionary $p$-Laplacian systems with lower order terms in
\cite[Theorem 1, p.390]{Misawa}
(also see \cite{Karim-Misawa}).
Here the so-called Campanatto's perturbation method is applied to the gradient H\"{o}lder regularity for
the evolutionary $p$-Laplacian systems with H\"older coefficients and lower order terms.
We also refer to the book in \cite[Theorem 1.1, p.245]{DiBenedetto1}.
\end{proof}

\end{appendices}
%%%%%%%%%%%%%%%%%%%%参考文献
\vspace{3mm}

\end{document}